\theoremstyle{plain}
\newtheorem{theorem}{Theorem}
\newtheorem{lemma}{Lemma}
\newtheorem{definition}{Definition}
\newtheorem{corollary}{Corollary}
\numberwithin{equation}{section}
\numberwithin{table}{section}
\numberwithin{figure}{section}
\DeclareMathOperator{\re}{Re}
\DeclareMathOperator{\diam}{diam}
\newcommand{\vaddspace}[1]{\rule{0pt}{#1pt}}
\newcommand{\newparagraph}{\vaddspace{10}}
\def\oversortoftilde#1{\mathop{\vbox{\m@th\ialign{##\crcr\noalign{\kern3\p@}%
      \sortoftildefill\crcr\noalign{\kern3\p@\nointerlineskip}%
      $\hfil\displaystyle{#1}\hfil$\crcr}}}\limits}
\def\sortoftildefill{$\m@th \setbox\z@\hbox{$\braceld$}%
  \braceld\leaders\vrule \@height\ht\z@ \@depth\z@\hfill\braceru$}
\begin{document}

%------
% Insert the title of your paper and (if necessary)
% a short title for the running head.
%------
\title{The Complex Dimensions of Every Sierpi\'nski Carpet Modification of Dust Type}
\titlemark{DUST CARPETS}

%------
% Insert full names of the authors.
% Add further authors as follows:
%  \emsauthor{2}{}{}
%  \emsauthor{3}{}{}
% etc.
% Abbreviate first names for the running head.
%------
\emsauthor{1}{Jade Leathrum}{}
%\emsauthor{2}{Elijah Guptill}{}
%\emsauthor{3}{Daniel Sebo}{}
%\emsauthor{4}{Sean Watson}{}
%\emsauthor{5}{Erin Pearse}{}
%\emsauthor{6}{Michel Lapidus}{}

%------
% Use \authormark if the list of authors is too
% long for the running head: \authormark{A.~Doe et al.}
%------

%------
% Add one \emsaffil and one \email for each author.
%------
\emsaffil{1}{Department of Mathematics, University of California Riverside, 900 University Avenue, Riverside, CA 92521-0135, USA \email{jade.leathrum@ucr.edu}}

%------
% Add MSC 2020 codes according to www.ams.org/msc/msc2020.html.
% Secondary codes (in square brackets) are optional.
%------
\classification[28A75, 11B37]{28A80}

%------
% Add a list of keywords.
%------
\keywords{Sierpi\'nski carpet, cantor dust, fractals, self-similar carpet, complex dimensions, fractal zeta function}

%------
% Insert your abstract.
%------
\begin{abstract}
We investigate modified Sierpi\'nski Carpet fractals, constructed by dividing a square into a square $n \times n$ grid, removing a subset of the squares at each step, and then repeating that process for each square remaining in that grid. If enough squares are removed and in the proper places, we produce ``dust type'' carpets, which have a path-connected complement and are themselves not path-connected. We study these fractals using the fractal zeta functions, first introduced by Michel Lapidus, Goran Radunovi\'c, and Darko \v{Z}ubrini\'c in their book \emph{Fractal Zeta Functions and Fractal Drums}, from which we devised an analytical and combinatorial algorithm to compute the complex dimensions of every Sierpi\'nski Carpet modification of dust type.
\end{abstract}

\maketitle

%------
% INSERT THE BODY OF THE PAPER HERE (except
% acknowledgments, funding info and bibliography)
%------

\section{Introduction}

A classic example of a geometric fractal is known as the ``Sierpi\'nski Carpet''. It is constructed by starting with a unit square in $\mathbb{R}^2$, dividing it into a uniform $3\times3$ grid, and removing the middle square. Each square remaining from the previous step is then divided into its own $3\times3$ grid and the middles of each of those grids are also removed. Continuing this process indefinitely yields Figure \ref{fig:normalsierpinskicarpet}.
\begin{figure}[H]
\includegraphics[scale=0.15]{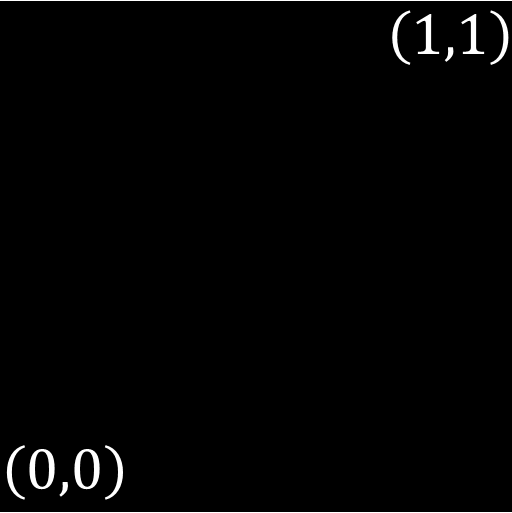}
\includegraphics[scale=0.15]{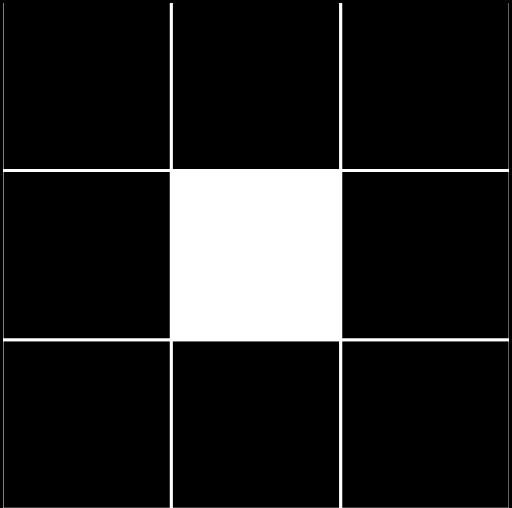}
\includegraphics[scale=0.15]{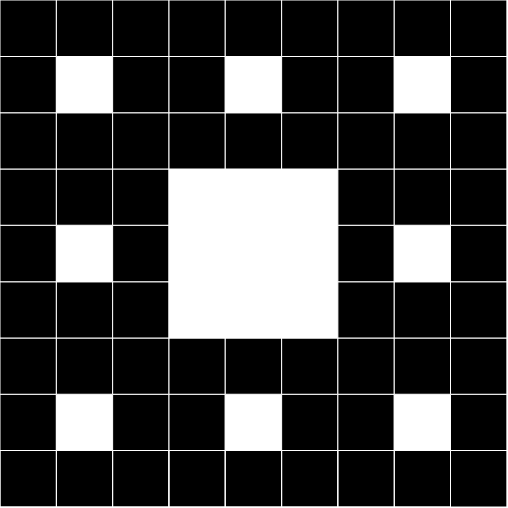}
\includegraphics[scale=0.15]{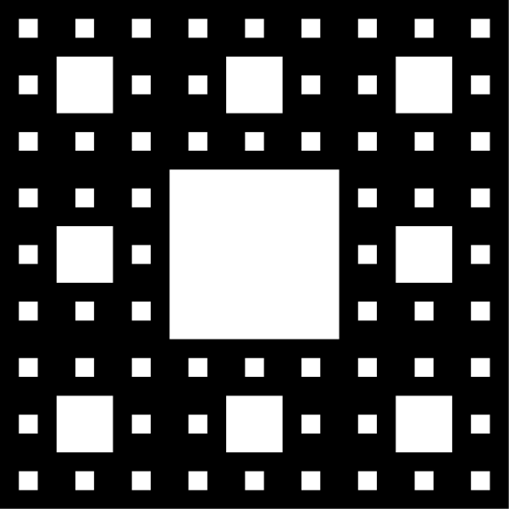}
\includegraphics[scale=0.15]{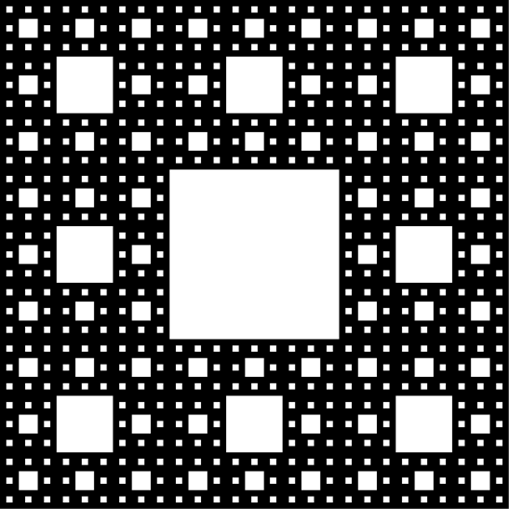}
\includegraphics[scale=0.15]{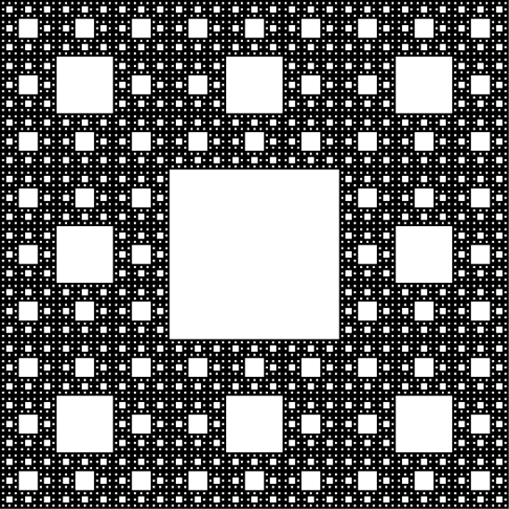}
\caption{The first 5 steps of the construction of the classic Sierpi\'nski Carpet.}
\label{fig:normalsierpinskicarpet}
\end{figure}

The classic Sierpi\'nski Carpet has numerous interesting geometric, topological, and measure-theoretic properties (\cite{sierpinskicarpetproperties}). However, we do not have to limit ourselves to only removing the middle square, and we do not have to limit ourselves to a $3\times3$ grid, as demonstrated in Figure \ref{fig:sierpinskicarpetmodificationintro}.
\begin{figure}[H]
\includegraphics[trim={0 0 0 1.5cm},clip=true,scale=0.15]{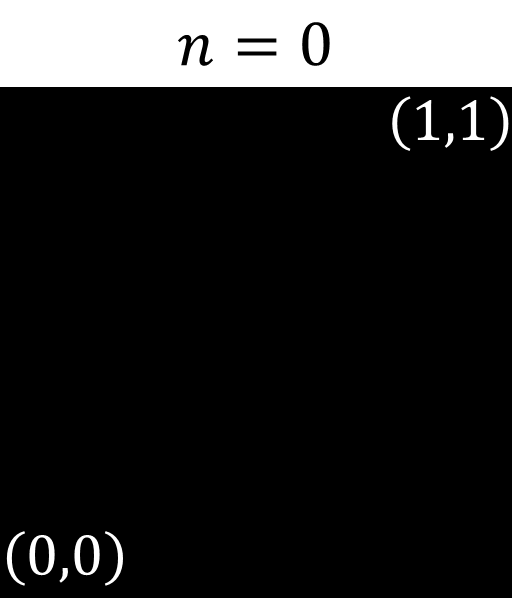}
\includegraphics[trim={0 0 0 1.5cm},clip=true,scale=0.15]{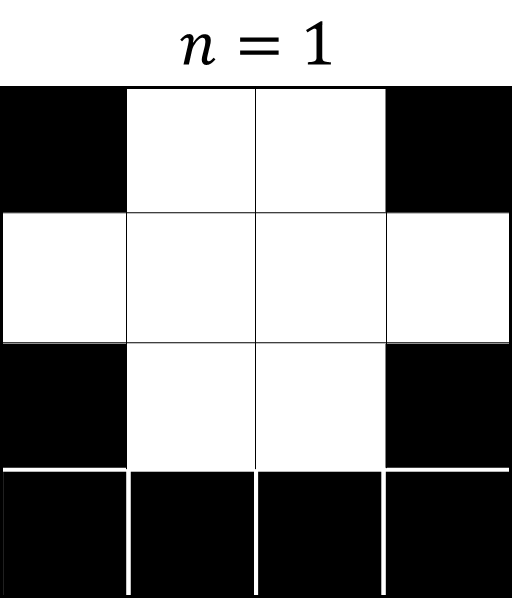}
\includegraphics[trim={0 0 0 1.5cm},clip=true,scale=0.15]{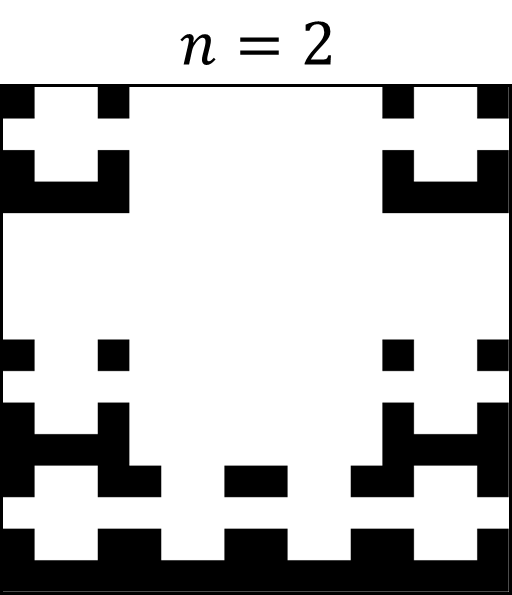}
\includegraphics[trim={0 0 0 1.5cm},clip=true,scale=0.15]{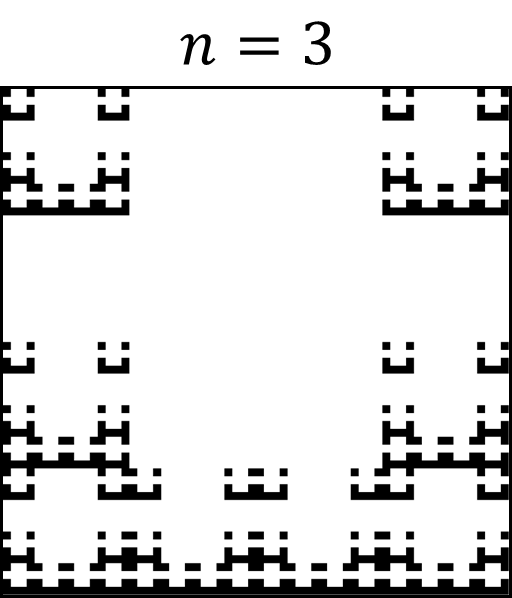}
\includegraphics[trim={0 0 0 1.5cm},clip=true,scale=0.15]{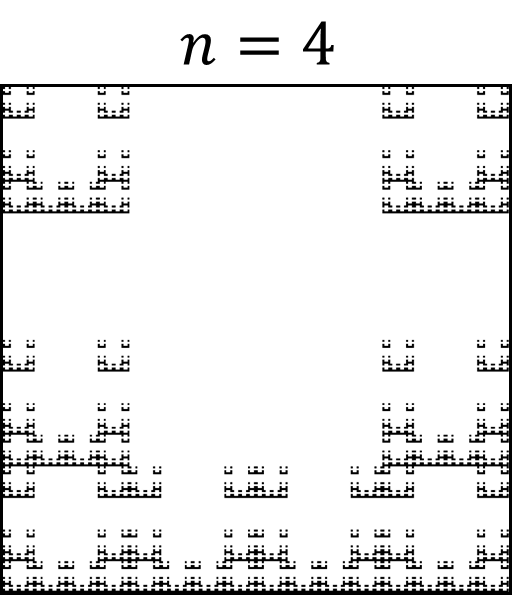}
\includegraphics[trim={0 0 0 1.5cm},clip=true,scale=0.15]{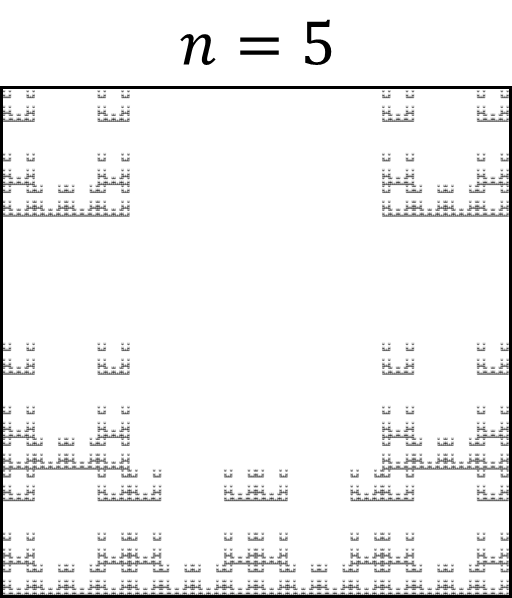}
\caption{The first 5 steps of the construction of a modified Sierpi\'nski Carpet utilizing a $4\times4$ grid.}
\label{fig:sierpinskicarpetmodificationintro}
\end{figure}

If enough squares are removed and in specific locations on the grid, we produce Sierpi\'nski Carpet modifications which look visually similar to the Cantor Dust. We call such carpets ``dust type''. Interestingly, dust type Sierpi\'nski Carpet modifications are extremely common, making up approximately 60-80\% of all the modifications that can be constructed on a fixed grid size larger than $3\times3$ (unique up to rotation and reflection). \\

In this paper, our goal is to study dust type Sierpi\'nski Carpet modifications by computing a particular ``fractal zeta function'' introduced in \cite{lapidusbook}. These complex-valued zeta functions use analysis methods to reveal deep geometric properties of the original fractal. The one we use in this paper is the ``tubular zeta function'', which has the Minkowski dimension as well as a set of ``complex dimensions'' as its poles. The meaning of these complex dimensions will be explained briefly in Section \ref{chap:introtubularzeta}, and is discussed in more depth in \cite{lapidusbook}, \cite{lapidusbook2}, and \cite{lapidusbook3}. However, direct computations of the tubular zeta function are often difficult and result in non-elementary integrals. A central theme of this paper is the ability to reduce the computations from analysis to combinatorics. In the process, we improve on some results from the literature and give confirmations of other results. \\

This paper is organized as follows:
\begin{enumerate}
    \item \textbf{Introduction} The meaning of complex-valued dimensions (Section \ref{chap:introtubularzeta}) and definitions of terms (Section \ref{chap:dustdefinitions}). Includes a worked example of the Ternary Cantor Set (Section \ref{chap:ternarycantorsetexample}) and a brief preview of the results (Section \ref{chap:previewresults}).
    \item \textbf{Helper Theorems} Scaling theorems (Section \ref{chap:scalingtheorem}), holomorphicity theorems (Section \ref{chap:entireextensiontheorem}), and existence of ``thresholds'' which are central to the results that follow (Section \ref{chap:thresholdtheorem}).
    \item \textbf{Completely Dusty Case} The complex dimensions for the simplest dust type Sierpi\'nski Carpet modifications. Includes a worked example of the Cantor Dust (Section \ref{chap:cantordustexample}).
    \item \textbf{General Case} Explaining the geometry and combinatorics of the general dust type Sierpi\'nski Carpet modifications (Sections \ref{chap:intersectiontypes} and \ref{chap:countingintersections}), culminating in Theorem \ref{thm:gammaconnected2} (Section \ref{chap:theorem42}). Includes a worked example of a modified Cantor Grill (Section \ref{chap:cantorstripesathomeexample}).
    \item \textbf{Future Study} Discussion of polyhedral zeta functions (Section \ref{chap:polyhedralzetafunctions}), non-dust type Sierpi\'nski Carpet modifications (Section \ref{chap:nondustcarpets}), Bedford--McMullen Carpets (Section \ref{chap:bedfordmcmullencarpets}), and an extension of some results from $\mathbb{R}^2$ to $\mathbb{R}^N$ for general $N \in \mathbb{N}$ (Section \ref{chap:mengersponges}).
\end{enumerate}

%For each major section, have similar "introduce the topic" like done with the presentation!

%Talk about Sierpinski Carpets - what are they, what are they for?

\subsection{The Tubular Zeta Function} \label{chap:introtubularzeta}
We will study the geometry of nonempty compact subsets $A \subseteq \mathbb{R}^N$ for $N \in \mathbb{N}$ via the tubular zeta function, defined in \cite{lapidusbook}:
\begin{equation}
\tilde{\zeta}_A(s,\delta) = \int_0^{\delta} t^{s-N-1} |A_t| \,\mathrm{d}t, \label{eq:tubularzetafunctiondef}
\end{equation}
where $A_t = \{y \in \mathbb{R}^N : d(x,y) < t\}$, $d$ is the Euclidean metric in $\mathbb{R}^N$, and $|\cdot|$ is Lebesgue measure in $\mathbb{R}^N$. The function $|A_t|$ is commonly called the ``tube function''. Using the tube function, we can define the following properties of $\frac{|A_t|}{t^{N-d}}$ for some $d > 0$:
\begin{align}
\mathscr{M}_d^* &= \limsup_{t\rightarrow0^+} \frac{|A_t|}{t^{N-d}} \\
\mathscr{M}_{*,d} &= \liminf_{t\rightarrow0^+} \frac{|A_t|}{t^{N-d}}.
\end{align}
We can now define the terms in Table \ref{table:minkowskithings}. \\
\begin{table}[!h]
\begin{center}
\begin{tabular}{|c|c|c|}
\hline
\textbf{Name} & \textbf{Mathematical Definition} \\
\hline
\hline
Upper Minkowski dimension $\overline{D}$ & $\overline{D} = \inf\{d > 0 : \mathscr{M}_d^* = 0\}$ \\
\hline
Lower Minkowski dimension $\underline{D}$ & $\underline{D} = \inf\{d > 0 : \mathscr{M}_{*,d} = 0\}$ \\
\hline
Upper Minkowski content $\mathscr{M}^*$ & $\mathscr{M}^* = \mathscr{M}_{\overline{D}}^*$ \\
\hline
Lower Minkowski content $\mathscr{M}_*$ & $\mathscr{M}_* = \mathscr{M}_{*,\underline{D}}$ \\
\hline
\end{tabular}
\caption{Definitions for Minkowski dimensions and contents.}
\label{table:minkowskithings}
\end{center}
\end{table} \\

All the examples studied in this paper will be homogeneous self-similar fractals which satisfy the open set condition, so the upper and lower Minkowski dimensions will be equal (and hence we will refer to $D = \overline{D} = \underline{D}$ as simply the ``Minkowski dimension''), the Minkowski dimension $D$ will equal the box-counting and Hausdorff dimensions, and $0 < \mathscr{M}_* < \mathscr{M}^* < \infty$. In the theory of fractal strings, it is known that in the case where $A \subseteq \mathbb{R}^1$ is the boundary of a geometric realization of a lattice self-similar string, $\frac{|A_t|}{t^{1-D}}$ oscillates between the upper and lower Minkowski contents in a multiplicatively-periodic manner in $\ln(t)$ by \cite[Theorem 8.23]{lapidusbook3}. As stated in \cite[Problem 6.2.35]{lapidusbook}, it is conjectured that the same oscillation behavior will be present in $\frac{|A_t|}{t^{N-D}}$ for lattice self-similar sets $A \subseteq \mathbb{R}^N$ for $N \in \mathbb{N}$ which satisfy the open set condition. Examples for which the conjecture is known to hold include the classic Sierpi\'nski Carpet (\cite[Example 2.3.36]{lapidusbook}) and Sierpi\'nski Gasket (\cite[Equation 1.1.28]{lapidusbook}), and further details are given in \cite{lapidusbook}, \cite{lapidusbook2}, and \cite{lapidusbook3}. The asymptotics of the tube function lead to considering the following modified Taylor Series:
\begin{equation}
|A_t| = \sum_{d \in \mathscr{P}(A)} c_d t^{N-d},
\end{equation}
where $\mathscr{P}(A) \subseteq \mathbb{C}$ is a multiset, and where the largest real part of an element of $\mathscr{P}(A)$ will be exactly $D$. According to the theory of complex dimensions, oscillatory behavior of $\frac{|A_t|}{t^{N-D}}$ is encoded in the non-real complex elements of $\mathscr{P}(A)$. Since our examples are bounded subsets of $\mathbb{R}^N$, $\mathscr{P}(A)$ will be a discrete countable multiset where each distinct element has finite multiplicity (\cite[p. 8]{lapidusbook}). \\

The aforementioned $\tilde{\zeta}_A(s,\delta)$ from Equation (\ref{eq:tubularzetafunctiondef}) falls under a class of ``fractal zeta functions'', with the tubular zeta function being a truncated Mellin transform of $|A_t|$. The Mellin transform of $t^d$ is $\frac{1}{s+d}$ under suitable conditions on $t$ and $s$, so since the tubular zeta function is a shifted and truncated Mellin transform of $|A_t|$, we can compute the multiset $\mathscr{P}(A)$ by locating the poles of the tubular zeta function. Thus, it is valuable to find the poles of the tubular zeta function, as it translates to information about the tube function, which then gives helpful information about the geometry of the set $A$. Furthermore, by \cite[Theorem 2.2.1]{lapidusbook}, we know that the tubular zeta function is holomorphic on the half-plane $\{s \in \mathbb{C} : \re(s) > D\}$, and we will be taking meromorphic continuations in order to locate all the other poles. This meromorphic continuation, when it exists, will necessarily be unique by the principle of analytic continuation. \\
%In the examples we study, we will have by \cite[Theorem 4.7.2]{lapidusbook} that $\tilde{\zeta}_A$ has a meromorphic continuation to all of $\mathbb{C}$, which will necessarily be unique by the principle of analytic continuation. \\

For computation, as a starting example, consider the unit square
\begin{equation}
A = [0,1] \times [0,1] \subseteq \mathbb{R}^2.
\end{equation}
When we apply the tubular zeta function to the unit square, we are led to the following computation:
\begin{align}
\tilde{\zeta}_A(s,\delta) &= \int_0^{\delta} t^{s-3} (1+4t+\pi t^2) \,\mathrm{d}t \nonumber \\
&= \frac{\delta^{s-2}}{s-2} + \frac{4\delta^{s-1}}{s-1} + \frac{\pi\delta^s}{s}.
\end{align}
We can use the principle of analytic continuation to produce a meromorphic continuation of the resulting function to all of $\mathbb{C}$, thus allowing us to effectively ignore the half-plane of convergence for the integral. Using this continuation, we conclude that the tubular zeta function on the unit square has poles at $s = 0,1,2$. We will denote this as follows:
\begin{equation}
\mathscr{P}(A) = \{0,1,2\}.
\end{equation}
This corresponds to the intuitive notion that the unit square is geometrically constructed out of components of Minkowski dimension 0, 1, and 2 (points, lines, and 2-dimensional faces, respectively). We now want to find the poles of the tubular zeta function when we let the set $A$ be a fractal.
\newparagraph

\subsection{Modified Sierpi\'nski Carpets} \label{chap:dustdefinitions}
%Define what a "Sierpinski Carpet Modification" is, define what "Dust Type" means
In this paper, we will use $\mathbb{N}$ to denote the set of strictly positive integers. Any occurrences of ``$0^0$'' that may appear in formulas will be treated as $\displaystyle \lim_{x\rightarrow 0^+} x^x = 1$, for the sake of simplicity in the definitions. \\

\begin{definition}[Sierpi\'nski Carpet Modification] \label{def:sierpinskicarpetmodification}
Let $p \in \mathbb{N}$ be such that $p \geq 2$. We construct a \textit{Sierpi\'nski Carpet modification} by the following algorithm:
\begin{enumerate}
    \item Begin with the unit square $A_0 = [0,1] \times [0,1] \subseteq \mathbb{R}^2$.
    \item Dissect the square into a uniform $p \times p$ grid, producing nonoverlapping squares of side length $\frac{1}{p}$.
    \item (Produce Level 1 of the fractal construction) Choose a collection of $m \in \mathbb{N}$ grid squares to keep, removing all the others. We will call the result $A_1$. We require $m \in [1,p^2-1] \cap \mathbb{N}$ in order to ensure $A_1 \neq \emptyset$ and $A_1 \neq A_0$.
    \item (Produce Level $n$ of the fractal construction from Level $n-1$) For each square that remains in $A_{n-1}$ (Level $n-1$ of the fractal construction), divide it into its own $p \times p$ grid (making nonoverlapping squares of side length $\frac{1}{p^n}$) and remove the corresponding collection of squares from each grid. We will call the result $A_n$. \label{recursionstep}
    \item Repeat Step \ref{recursionstep} infinitely-many times to produce $\displaystyle A = \bigcap_{n=1}^{\infty} A_n$.
\end{enumerate}
\end{definition}

\begin{definition}[Dust Type] \label{def:dusttype}
Let $A$ be a Sierpi\'nski Carpet modification constructed from a $p \times p$ grid where at least 2 squares are kept at each step. $A$ is of ``dust type'' if $A^C$ is path-connected but $A$ itself is not.
\end{definition}

See Figure \ref{fig:sierpinskicarpetmodification} for a specific example.

\begin{figure}[H]
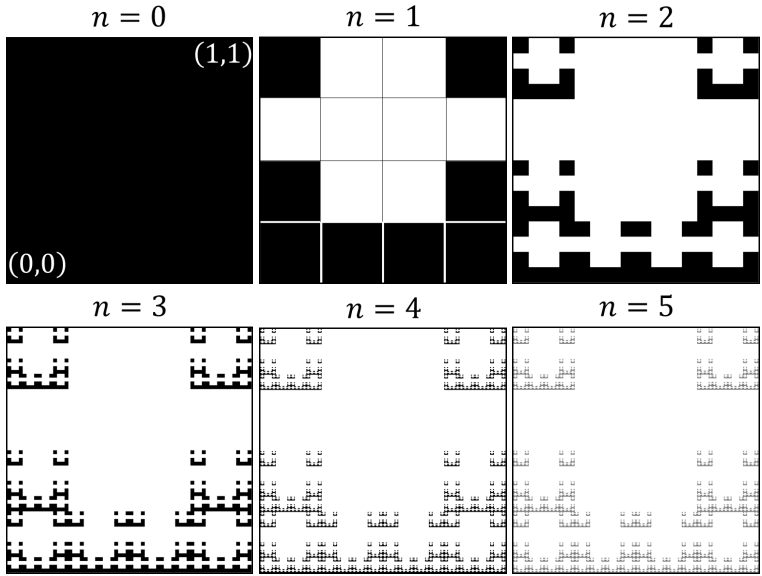

\includegraphics[scale=0.3]{Images/NEquals0}
\includegraphics[scale=0.3]{Images/SierpinskiCarpetModification_NEquals1}
\includegraphics[scale=0.3]{Images/SierpinskiCarpetModification_NEquals2}\\
\includegraphics[scale=0.3]{Images/SierpinskiCarpetModification_NEquals3}
\includegraphics[scale=0.3]{Images/SierpinskiCarpetModification_NEquals4}
\includegraphics[scale=0.3]{Images/SierpinskiCarpetModification_NEquals5}\\
\caption{The first 5 steps of the construction of an example Sierpi\'nski Carpet modification of dust type in the $4\times4$ grid.}
\label{fig:sierpinskicarpetmodification}
\end{figure}

If a Sierpi\'nski Carpet modification in the $p \times p$ grid has $m = 1$ square kept in the construction process, then as discussed in \cite[Chapter 3.1]{mastersthesis}, the resulting set will simply be a translation of $\{(0,0)\}$ and thus will not have fractal-like properties. Thus, we will only consider cases where $m \in [2, p^2-1]$. Furthermore, Figure \ref{fig:all2x2} displays all the Sierpi\'nski Carpet modifications that can be generated from the $2\times2$ grid with $m = 2,3$ (unique up to rotation and reflection). \\
\begin{figure}[H]
\includegraphics[scale=0.2]{Images/NEquals0}
\includegraphics[scale=0.2]{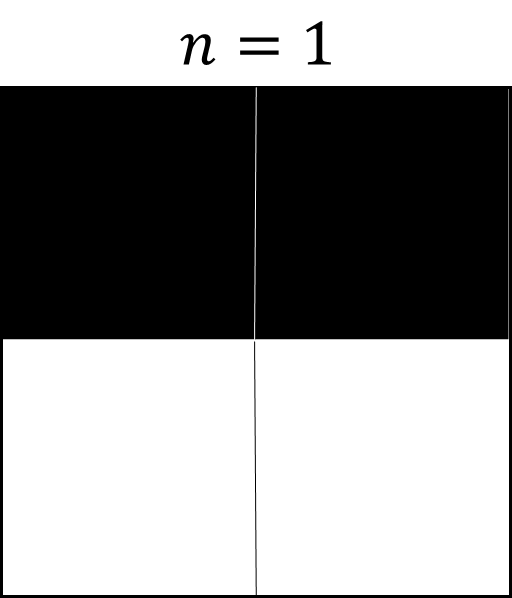}
\includegraphics[scale=0.2]{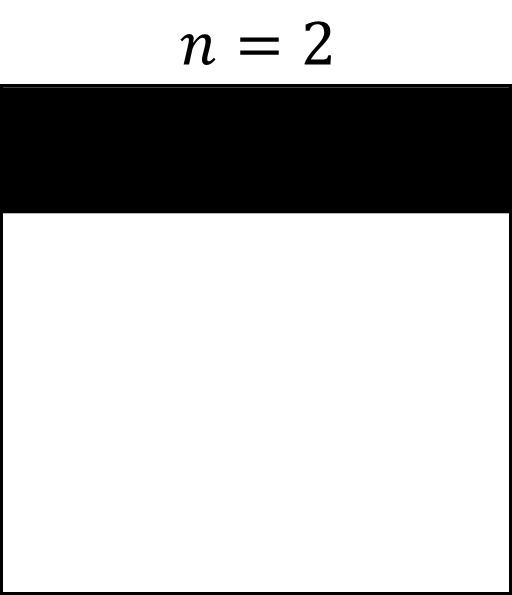}
\includegraphics[scale=0.2]{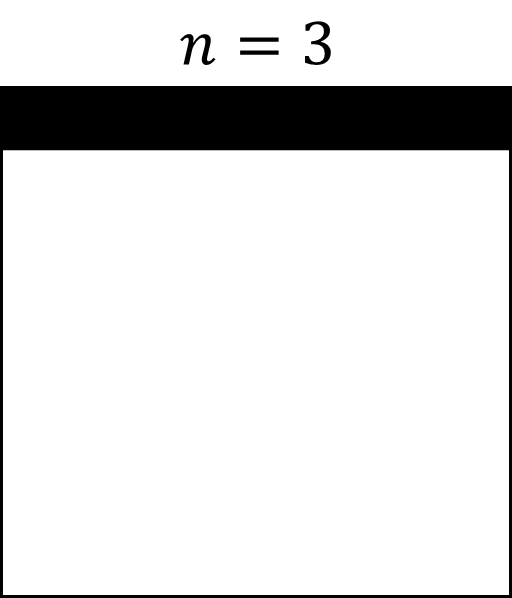}
\includegraphics[scale=0.2]{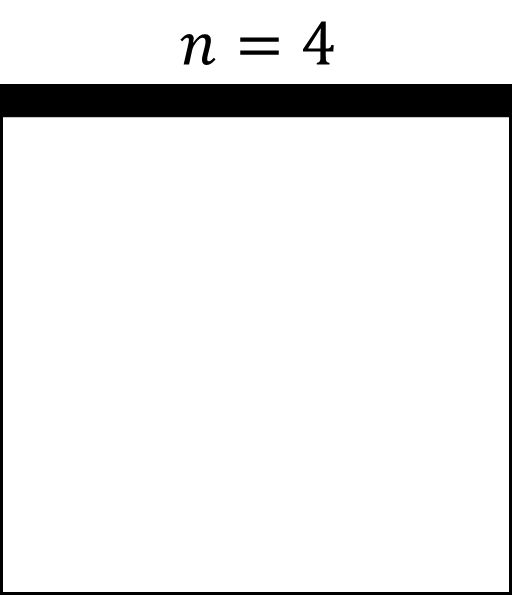}\\
\includegraphics[scale=0.2]{Images/NEquals0}
\includegraphics[scale=0.2]{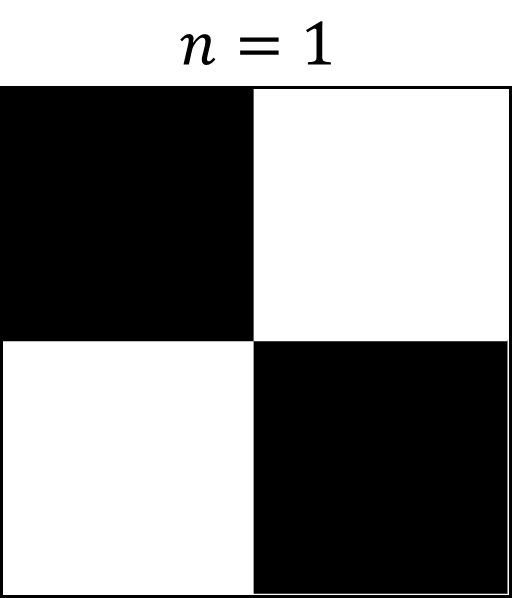}
\includegraphics[scale=0.2]{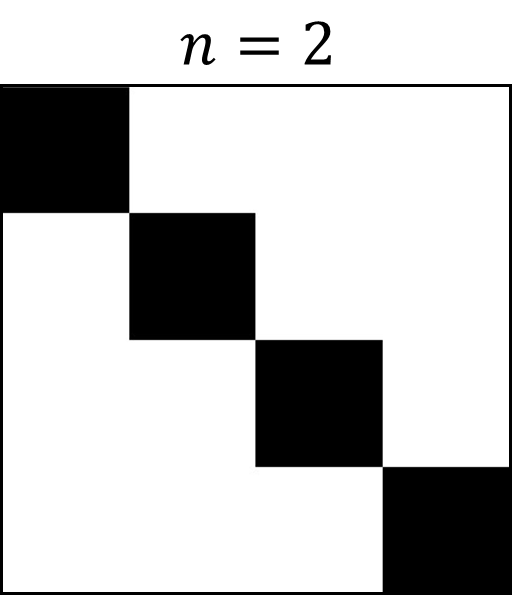}
\includegraphics[scale=0.2]{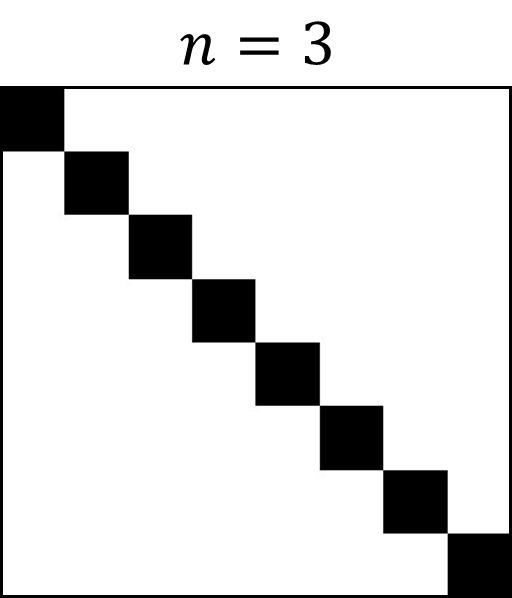}
\includegraphics[scale=0.2]{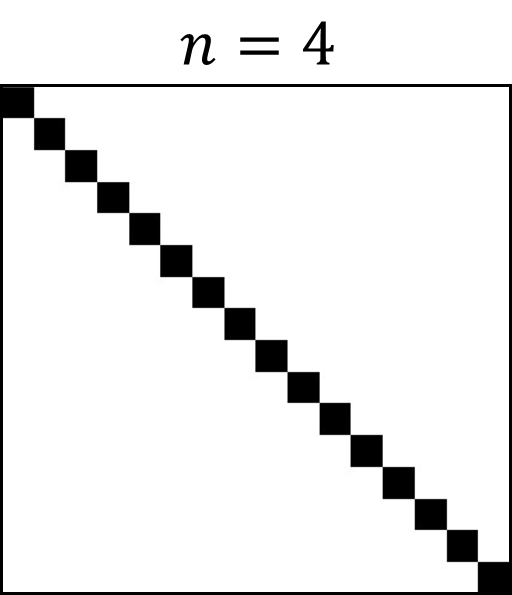}\\
\includegraphics[scale=0.2]{Images/NEquals0}
\includegraphics[scale=0.2]{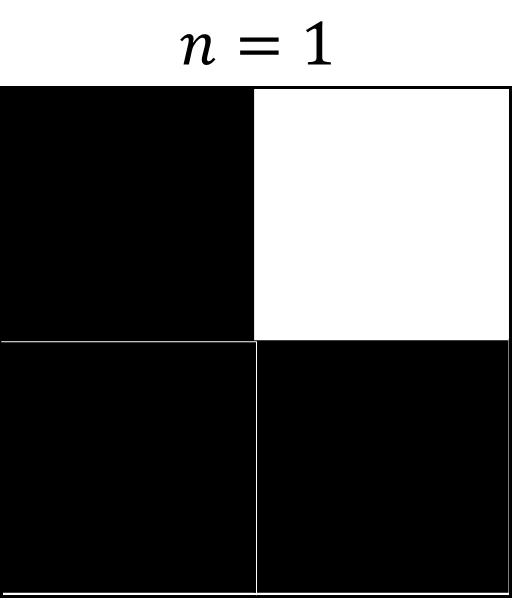}
\includegraphics[scale=0.2]{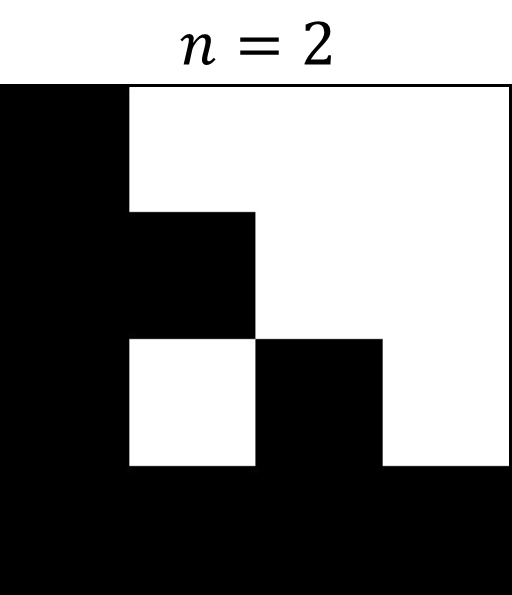}
\includegraphics[scale=0.2]{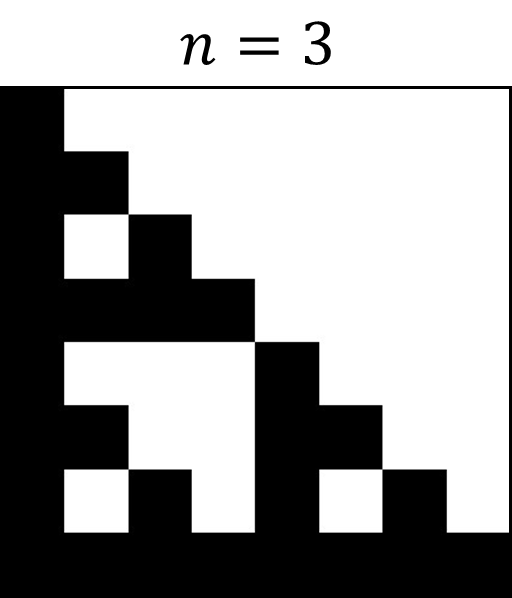}
\includegraphics[scale=0.2]{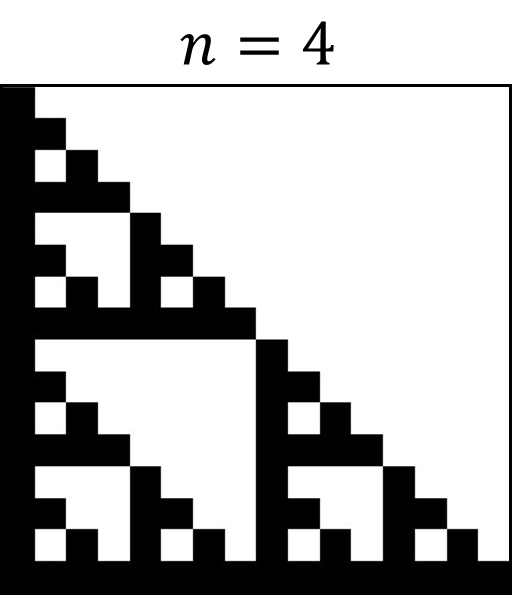}
\caption{The first 4 steps of the construction of all possible Sierpi\'nski Carpet modifications in the $2\times2$ grid for $m = 2,3$.}
\label{fig:all2x2}
\end{figure}
Figure \ref{fig:all2x2} shows that none of the Sierpi\'nski Carpets modifications on the $2\times2$ grid will be of dust type. Thus, we will only consider the Sierpi\'nski Carpet modifications on the $p \times p$ grid where $p \geq 3$ and $m \in [2, p^2-1]$.
\newparagraph

\subsection{Motivating Example: Ternary Cantor Set} \label{chap:ternarycantorsetexample}
%Go through Cantor Set computation to explain why we DON'T want to do that
We will use the following computation to demonstrate why we wish to avoid manual computation of the area function. Consider a Sierpi\'nski Carpet modification which produces the Ternary Cantor Set, given in Figure \ref{fig:cantorset}.
\begin{figure}[H]
\includegraphics[scale=0.3]{Images/NEquals0}
\includegraphics[scale=0.3]{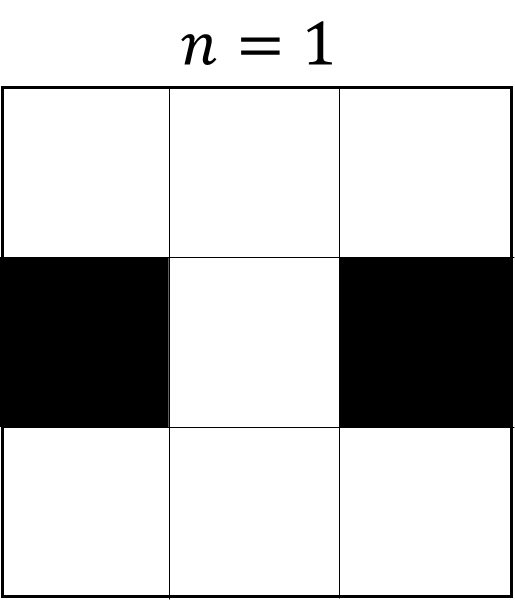}
\includegraphics[scale=0.3]{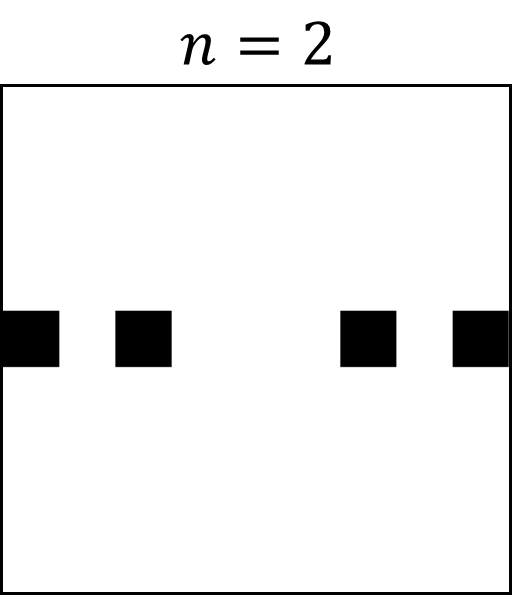}\\
\includegraphics[scale=0.3]{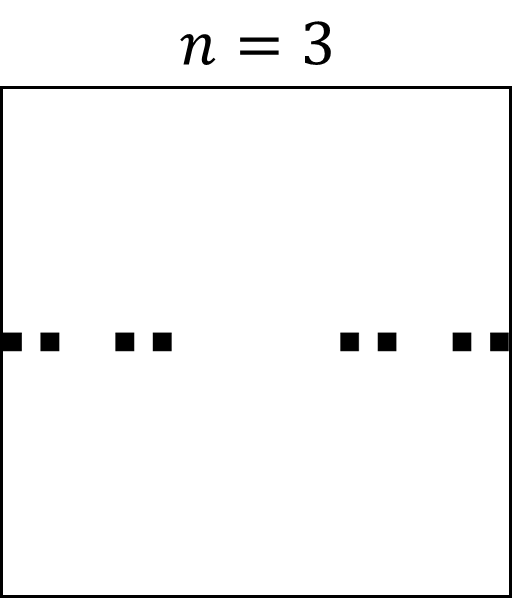}
\includegraphics[scale=0.3]{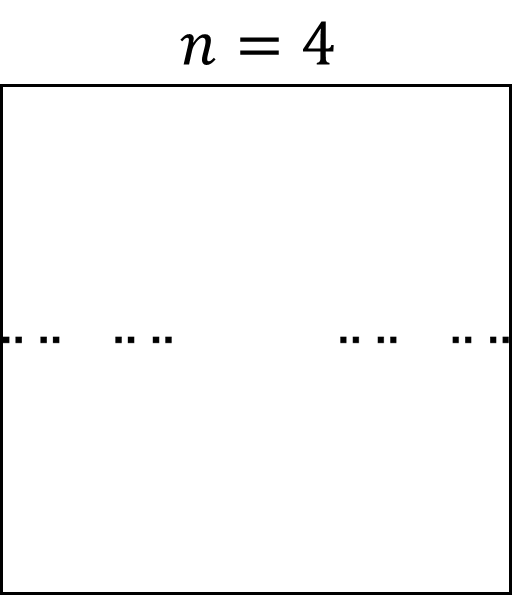}
\includegraphics[scale=0.3]{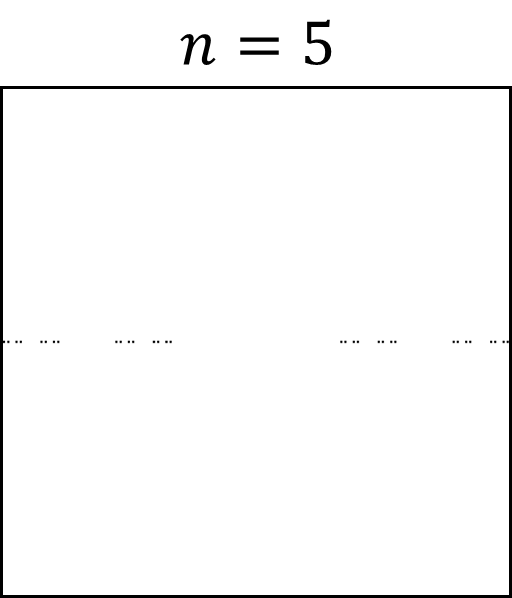}\\
\caption{The first 5 steps of the construction of a Sierpi\'nski Carpet modification which produces the Ternary Cantor Set.}
\label{fig:cantorset}
\end{figure}
We will compute the area of $A_t$ by considering a solid pill-shape area and then subtracting the excess area. The excess areas are shown in Figure \ref{fig:cusparea}.
\begin{figure}[H]
\includegraphics[scale=0.5]{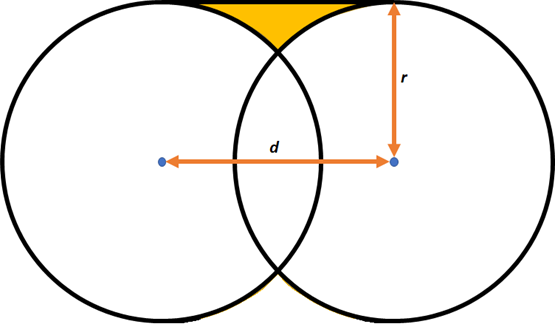}
\caption{Area of cusp between two circles, notated $C(r,d)$.}
\label{fig:cusparea}
\end{figure}
Using standard calculus techniques, we see that when $d \in (0,2r)$, we have the following area:

\begin{align}
C(r,d) &= 2\int_0^{\frac{d}{2}} \left(r - \sqrt{r^2 - \left(x-\frac{d}{2}\right)^2}\right) \,\mathrm{d}x \nonumber\\
&= dr - \frac{1}{4} d \sqrt{4r^2 - d^2} - r^2 \arctan\left(\frac{d}{\sqrt{4r^2 - d^2}}\right) \label{eq:cusparea}.
\end{align}
As we will see later, we can assume for simplicity that $\delta > \frac{1}{6}$. When $t \in \left(\frac{1}{6}, \delta\right]$, we obtain the following area formula:
\begin{equation}
|A_t| = 2t + \pi t^2 - \sum_{k=1}^{\infty} 2^k C\left(\frac{1}{3^k}, t\right).
\end{equation}
When $t \in \left(\frac{1}{2\cdot3^{n+1}}, \frac{1}{2\cdot3^n}\right]$ for some $n \in \mathbb{N}$, we obtain:
\begin{equation}
|A_t| = 2^n (2t + \pi t^2) - 2^n \sum_{k=1}^{\infty} 2^k C\left(\frac{1}{3^{k+n}}, t\right).
\end{equation}
Using properties of Lebesgue integrals, we can dissect the integral over $[0,\delta]$ into a sum of integrals across a countable number of disjoint subintervals:
\begin{align}
\tilde{\zeta}_A(s,\delta) &= \int_0^{\delta} t^{s-3} |A_t| \,\mathrm{d}t \nonumber\\
&= \int_{\frac{1}{6}}^{\delta} t^{s-3} \left(2t + \pi t^2 - \sum_{k=1}^{\infty} 2^k C\left(\frac{1}{3^k}, t\right)\right) \,\mathrm{d}t \label{eq:cantorsetbigt}\\
&+ \sum_{n=1}^{\infty} \int_{\frac{1}{2\cdot3^{n+1}}}^{\frac{1}{2\cdot3^n}} t^{s-3} \left(2^n (2t + \pi t^2) - 2^n \sum_{k=1}^{\infty} 2^k C\left(\frac{1}{3^{k+n}}, t\right)\right) \,\mathrm{d}t. \label{eq:cantorsetsumoft}
\end{align}
Because of the complexity of the $C(r,d)$ function, the integrals in Equations (\ref{eq:cantorsetbigt}) and (\ref{eq:cantorsetsumoft}) cannot be evaluated with traditional methods. However, \cite[Example 2.3.31]{lapidusbook} and \cite[Sections 1.1 and 1.2]{lapidusbook3} discusses the Ternary Cantor Set $\mathfrak{C}$ embedded in $\mathbb{R}^1$ and computes precisely that
\begin{equation}
\mathscr{P}(\mathfrak{C}) = \left\{\log_3(2) + \frac{2\pi i j}{\ln(3)} : j \in \mathbb{Z}\right\}.
\end{equation}
Furthermore, \cite[Theorem 4.7.3]{lapidusbook} states that the poles are invariant under choice of $\mathbb{R}^N$ to embed the set into, as long as $N \geq \dim(A) = \log_3(2)$. Therefore, since the construction of the set $A$ produces exactly the Ternary Cantor Set embedded in $\mathbb{R}^2$, we have the following:
\begin{equation}
\mathscr{P}(A) = \left\{\log_3(2) + \frac{2\pi i j}{\ln(3)} : j \in \mathbb{Z}\right\}.
\end{equation}
\newparagraph

%Result is Lemma 2.2.31 (page 136) of FZF: if increase N, don't change the poles
    %So if you figure out the Cantor Set dimensions on R^1, you figure it out for R^2 as well
    %Alternatively, Cantor Stripes is an extrusion (aka Fractal Grill) of Cantor Set. Again, figure out Cantor Set on R^1, then use Lemma 2.2.31 to get the poles for Cantor Stripes
%Also: Theorem 4.7.3 (page 394) has an exact formula for when A is a bounded set, gives an exact integral formula to convert from the tubular zeta for A to the tubular zeta for Ax{0}

\subsection{Preview of Main Results} \label{chap:previewresults}
We will be studying dust type Sierpi\'nski Carpet modifications in $\mathbb{R}^2$. The methods that we discuss in this paper are designed to circumvent the manual calculation of the area function. However, we will only be able to find a \textit{maximal set} of poles, or a multiset of \textit{possible} complex dimensions, leaving open the possibility for zero-pole cancellation. \\

In the case of $A \subseteq \mathbb{R}^2$ being a dust type Sierpi\'nski Carpet modification constructed from a $p\times p$ grid, we obtain the following:
\begin{equation}
\mathscr{P}(A) \subseteq \left\{\log_p(m) + \frac{2\pi i j}{\ln(p)} : j \in \mathbb{Z}\right\} \cup \bigcup_{r \in R} \left\{\log_p(r) + \frac{2\pi i j}{\ln(p)} : j \in \mathbb{Z}\right\},\label{eq:allpossiblepoles}
\end{equation} 
for some $R \subseteq \{1,2,\dots,p\}$, where $m$ is the number of grid squares kept in the construction of the fractal. All of the poles we obtain will be at worst simple, except some dust type carpets will produce (at worst) poles of order 2 at $s = \log_p(1) + \frac{2\pi i j}{\ln(p)}$ for all $j \in \mathbb{Z}$. We will be able to determine the subset $R$ in Equation (\ref{eq:allpossiblepoles}) entirely from visually counting features present in $A_1, A_2$, and $A_3$, as well as determining the presence of particular features in $A_t$ for a positive-measure set of $t \in [0,\delta]$. Theorems \ref{thm:gammaconnected1} and \ref{thm:gammaconnected2} state these results more precisely with a detailed proof.
\newparagraph

\section{Helper Theorems} \label{chap:helpertheorems}
We begin with establishing key theorems, which together will allow us to reduce the computation of the tubular zeta function from analysis to combinatorics. Theorems \ref{thm:scaling} and \ref{thm:entireextension} and Corollary \ref{thm:anydelta} are well-known in the literature and are not limited to Sierpi\'nski Carpet modifications. Theorems \ref{thm:thresholdexists} and \ref{thm:infinitethresholdsexist} come directly from the fact that the sets of interest are dust type Sierpi\'nski Carpet modifications, helping to explain why the definition of ``dust type'' is sensible.

\subsection{Scaling Theorem} \label{chap:scalingtheorem}
%Can scale uniformly by lambda, just introduce lambda^s
Theorem \ref{thm:scaling} is obtained in \cite[Proposition 2.2.22]{lapidusbook}. For completeness, we provide a statement and proof of this result.

\begin{theorem}[Scaling Theorem] \label{thm:scaling}
If a measurable set $A \subseteq \mathbb{R}^N$ is uniformly scaled by a factor $\lambda > 0$, then for any $\delta > 0$, $\tilde{\zeta}_{\lambda A}(s, \lambda\delta) = \lambda^s \tilde{\zeta}_A(s, \delta)$.
\end{theorem}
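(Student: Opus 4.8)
The plan is to prove the scaling identity directly from the definition of the Tubular Zeta Function by tracking how each ingredient transforms under a uniform scaling. First I would record the elementary geometric fact that scaling commutes with tube formation: if $A$ is scaled by $\lambda > 0$, then a point lies within distance $\lambda t$ of $\lambda A$ exactly when the corresponding unscaled point lies within distance $t$ of $A$, so $(\lambda A)_{\lambda t} = \lambda(A_t)$. This is just the statement that the Euclidean metric is homogeneous of degree one under dilation.

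Next I would compute how Lebesgue measure responds. Since we are in $\mathbb{R}^N$ and scaling by $\lambda$ multiplies $N$-dimensional volume by $\lambda^N$, we have $|(\lambda A)_{\lambda t}| = |\lambda (A_t)| = \lambda^N |A_t|$. Combining this with the previous observation, I can express the tube function of $\lambda A$ at radius $u$ in terms of the tube function of $A$: setting $u = \lambda t$, we get $|(\lambda A)_u| = \lambda^N |A_{u/\lambda}|$.

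The main computation is then a change of variables in the defining integral. Starting from
\begin{align}
\tilde{\zeta}_{\lambda A}(s, \lambda\delta) &= \int_0^{\lambda\delta} u^{s-N-1} |(\lambda A)_u| \,du,
\end{align}
I would substitute $u = \lambda t$, so that $du = \lambda \, dt$ and the limits $u \in (0, \lambda\delta]$ become $t \in (0, \delta]$. Using $|(\lambda A)_{\lambda t}| = \lambda^N |A_t|$ from the previous step, the integrand becomes $(\lambda t)^{s-N-1} \cdot \lambda^N |A_t| \cdot \lambda \, dt$. Collecting the powers of $\lambda$ gives $\lambda^{(s-N-1)+N+1} = \lambda^s$, which factors out of the integral to leave exactly $\lambda^s \tilde{\zeta}_A(s,\delta)$, as claimed.

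I expect the routine part to be the change of variables, which is completely mechanical once the two geometric facts are in place. The only point requiring a little care is the justification that the identity, proved on the half-plane $\re(s) > D$ where the integral converges, persists under meromorphic continuation: since both sides are meromorphic functions agreeing on an open set, the principle of analytic continuation (already invoked in the excerpt for uniqueness of the continuation) forces them to agree wherever both are defined. This is the only genuinely nonroutine step, and it is brief.
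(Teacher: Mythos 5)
Your proposal is correct and takes essentially the same route as the paper's proof: both rest on the identity $|(\lambda A)_u| = \lambda^N |A_{u/\lambda}|$ followed by the substitution $u = \lambda t$ in the defining integral, with the powers of $\lambda$ collecting to $\lambda^s$. Your splitting of the geometric input into the set identity $(\lambda A)_{\lambda t} = \lambda (A_t)$ (which the paper records separately as Lemma \ref{thm:inflationlemma}) and your closing remark on persistence under meromorphic continuation are mild refinements, not a different argument.
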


\begin{proof}
Let $A \subseteq \mathbb{R}^N$ be Lebesgue-measurable, and let $\lambda > 0$. Then by definition of the Lebesgue measure in $\mathbb{R}^N$, we have that $\displaystyle |(\lambda A)_t| = \lambda^N \left|A_{\frac{t}{\lambda}}\right|$. Plugging this into the tubular zeta function formula yields:
\begin{equation}
\tilde{\zeta}_{\lambda A}(s, \lambda\delta) = \int_0^{\lambda\delta} t^{s-N-1} |(\lambda A)_t| \,\mathrm{d}t
= \int_0^{\lambda\delta} t^{s-N-1} \lambda^N |A_{\frac{t}{\lambda}}| \,\mathrm{d}t.
\end{equation}
We will now substitute $\tau = \frac{t}{\lambda}$:
\begin{equation}
= \lambda \int_0^{\delta} \tau^{s-N-1} \lambda^{s-1} |A_{\tau}| \,d\tau
= \lambda^s \int_0^{\delta} \tau^{s-N-1} |A_{\tau}| \,d\tau
= \lambda^s \tilde{\zeta}_A(s,\delta).
\end{equation}
\end{proof}
\newparagraph

\subsection{Entire Extension} \label{chap:entireextensiontheorem}
Theorem \ref{thm:entireextension} and the proceeding Corollary \ref{thm:anydelta} are stated in \cite[Proposition 2.2.13]{lapidusbook}. For completeness, we provide proofs of the results.

\begin{lemma} \label{thm:entireextensionlemma}
Let $V \subseteq \mathbb{C}$ be an open set, and let $(E, \mathscr{B}(E), \mu)$ be a measurable space with positive measure $\mu$. Let $f : V \times E \rightarrow \mathbb{C}$ be such that:
\begin{enumerate}
    \item $f(\cdot,t)$ is holomorphic for $\mu$-almost every $t \in E$.
    \item $f(s,\cdot)$ is $\mu$-measurable for all $s \in V$.
    \item For every compact $K \subseteq V$, there exists $g \in L^1(\mu)$ such that $|f(s,t)| \leq g(t)$ for all $s \in V$ and for $\mu$-almost every $t \in K$.
\end{enumerate}
Then $\displaystyle F(s) = \int_E f(s,t) \,\mathrm{d}\mu(t)$ is holomorphic on $V$, and $\displaystyle F^{(k)}(s) = \int_E \frac{\partial^k}{\partial s^k} f(s,t) \,\mathrm{d}\mu(t)$.
\end{lemma}

Lemma \ref{thm:entireextensionlemma} is a well-known corollary of the Lebesgue Dominated Convergence Theorem, Morera's Theorem, and the Cauchy integral formula, and it is stated in \cite[Theorem 2.1.47]{lapidusbook}. As such, we will omit its proof.
\newparagraph
%Shows up in FZF, but Lapidus doesn't know exactly where. It's a well-known corollary of Dominated Convergence Theorem and Morera's Theorem
    %Theorem 2.1.47 is the Lemma! (page 82)
%Put proof here!

%\subsection{Entire-Extension Theorem}
%If the bounds are non-0, then extends to entire function
\begin{theorem}[Entire Extension Theorem] \label{thm:entireextension}
Let $A \subseteq \mathbb{R}^N$ be an arbitrary nonempty bounded set for $N \in \mathbb{N}$. If $0 < \alpha < \beta < \infty$, then $\displaystyle F(s) = \int_{\alpha}^{\beta} t^{s-N-1} |A_t| \,\mathrm{d}t$ can be extended to an entire function.
\end{theorem}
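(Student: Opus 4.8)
The plan is to show that $F(s) = \int_{\alpha}^{\beta} t^{s-N-1} |A_t| \,dt$ extends to a function holomorphic on all of $\mathbb{C}$ by exploiting that the domain of integration $[\alpha,\beta]$ is both bounded and bounded away from $0$. The essential observation is that, unlike the full Tubular Zeta Function where the lower limit $0$ produces the singular behaviour responsible for poles, the hypothesis $\alpha > 0$ keeps the factor $t^{s-N-1}$ uniformly controlled on $[\alpha,\beta]$; with no singularity to generate poles, the resulting integral should be entire.

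First I would record two elementary facts. For each fixed $t \in [\alpha,\beta]$, the map $s \mapsto t^{s-N-1} = e^{(s-N-1)\ln t}$ is entire in $s$. Moreover, since $A_t \subseteq A_\beta$ whenever $t \le \beta$, monotonicity of Lebesgue measure gives $|A_t| \le |A_\beta| < \infty$ on $[\alpha,\beta]$ (finiteness holding for the bounded sets $A$ under study), so the tube function is bounded there.

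Next I would establish the dominating bound that drives the whole argument. Fix an arbitrary compact set $K \subseteq \mathbb{C}$. Then
\[
\bigl| t^{s-N-1}|A_t| \bigr| = t^{\re(s)-N-1}|A_t| \le C_K\,|A_\beta| =: M_K < \infty \qquad (t \in [\alpha,\beta],\ s \in K),
\]
where $C_K := \sup_{t \in [\alpha,\beta],\, s \in K} t^{\re(s)-N-1}$ is finite precisely because $\alpha > 0$ forces $\ln t$ to remain bounded on $[\alpha,\beta]$; this is the one place where the hypothesis is indispensable.

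Finally I would conclude holomorphy via Morera's theorem. Letting $K$ range over all compact subsets of $\mathbb{C}$, the bounds $M_K$ show that $F$ is continuous on $\mathbb{C}$. For any closed triangular contour $\Gamma$, choosing a compact $K \supseteq \Gamma$, the bound $M_K$ justifies an application of Fubini's theorem, giving
\[
\oint_\Gamma F(s)\,ds = \int_\alpha^\beta |A_t| \left( \oint_\Gamma t^{s-N-1}\,ds \right) dt = 0,
\]
since the inner contour integral vanishes by Cauchy's theorem for the entire function $s \mapsto t^{s-N-1}$. Morera's theorem then yields that $F$ is entire; equivalently, one could differentiate under the integral sign using the same dominating bound. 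The main obstacle, though a mild one, is exactly the verification of the hypotheses licensing the interchange of the order of integration (or differentiation under the integral), and it is precisely there that $\alpha > 0$ enters.
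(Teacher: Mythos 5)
Your proof is correct, but it takes a genuinely different route from the paper's. Both arguments rest on the same essential estimate --- on any compact set of $s$-values, $t^{\re(s)-N-1}|A_t|$ is uniformly bounded for $t \in [\alpha,\beta]$ precisely because $\alpha > 0$ keeps $\ln t$ bounded --- but they exploit it differently. You take the soft route: continuity of $F$, then Fubini to interchange $\oint_\Gamma$ and $\int_\alpha^\beta$, the inner contour integral vanishing by Cauchy's theorem since $s \mapsto t^{s-N-1}$ is entire, and finally Morera. The paper instead fixes an arbitrary $s_0$ and proves complex differentiability there directly, applying the Dominated Convergence Theorem to the difference quotients to obtain
\begin{equation*}
\lim_{h\to 0}\frac{F(s+h)-F(s)}{h} = \int_\alpha^\beta \ln(t)\, t^{s-N-1}\,|A_t|\,dt,
\end{equation*}
and then invokes Goursat. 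Your approach avoids difference-quotient estimates entirely and is arguably cleaner; the paper's approach produces an explicit formula for $F'(s)$ as a by-product. Two small points you should tighten, neither of which is a genuine gap: first, continuity of $F$ does not follow from the bound $M_K$ alone --- you also need pointwise continuity of $s \mapsto t^{s-N-1}|A_t|$ for each fixed $t$ together with dominated convergence (both available, so this is a one-line fix); second, like the paper, you implicitly need $|A_\beta| < \infty$, which the hypothesis ``measurable'' alone does not guarantee --- you flag this correctly by restricting to the bounded sets under study, which is the intended setting.
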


\begin{proof}
Let $s_0 \in \mathbb{C}$. By definition of $A_t$, we have that $|A_t|$ is monotonic, continuous, and bounded on $[\alpha,\beta]$. So for any $s \in B(s_0,R) = \{z \in \mathbb{C} : |z-s_0| < R\}$, we have:
\begin{equation}
|F(s+N+1)| = \left|\int_{\alpha}^{\beta} t^s |A_t| \,\mathrm{d}t\right| \leq \int_{\alpha}^{\beta} |t^s| |A_t| \,\mathrm{d}t \leq |A_{\beta}| \int_{\alpha}^{\beta} t^{\re(s)} \,\mathrm{d}t.
\end{equation}
Thus, we deduce the following for the integrand $f(s,t) = t^s |A_t|$ where $s \in B(s_0,R)$:
\begin{equation}\label{eq:dominatedconvergence}
|t^s |A_t|| \leq |A_{\beta}| \cdot \max\{t^{\re(s_0) \pm R}\}
\end{equation}
The right-hand side of Equation \ref{eq:dominatedconvergence} is integrable on $[\alpha,\beta]$ since $\alpha > 0$ and $\beta$ is finite, and the bound is independent of $s \in B(s_0,R)$. Furthermore, $f(\cdot,t)$ is holomorphic for every $t \in [\alpha,\beta]$ since it is an exponential function in $s$. Hence, by Lemma \ref{thm:entireextensionlemma}, $F(s)$ is holomorphic at $s_0$. Therefore, since the choice of $s_0 \in \mathbb{C}$ was arbitrary, $F(s)$ is entire.
\end{proof}
\newparagraph
%Old proof
{
\iffalse
so by the Dominated Convergence Theorem, we have the following:
\begin{align*}
&\left|\lim_{h\rightarrow0} \frac{F(s+h+N+1) - F(s+N+1)}{h} - \int_{\alpha}^{\beta} \ln(t) t^s |A_t| \,\mathrm{d}t\right| \\
%&= \lim_{h\rightarrow0} \left|\frac{1}{h} \int_{\alpha}^{\beta} t^{s+h} |A_t| \,\mathrm{d}t - \frac{1}{h} \int_{\alpha}^{\beta} t^s |A_t| \,\mathrm{d}t - \int_{\alpha}^{\beta} t^s \ln(t) |A_t| \,\mathrm{d}t\right| \\
&= \lim_{h\rightarrow0} \left|\int_{\alpha}^{\beta} \left(\frac{t^h - 1}{h} - \ln(t)\right) t^s |A_t| \,\mathrm{d}t\right| \\
&\leq \lim_{h\rightarrow0} \int_{\alpha}^{\beta} \left|\frac{t^h - 1}{h} - \ln(t)\right| t^{\re(s)} |A_t| \,\mathrm{d}t \\
&= \int_{\alpha}^{\beta} \lim_{h\rightarrow0} \left|\frac{t^h - 1}{h} - \ln(t)\right| t^{\re(s)} |A_t| \,\mathrm{d}t.
\end{align*}
By limit properties, $\displaystyle \lim_{h\rightarrow0} \left|\frac{t^h - 1}{h} - \ln(t)\right| = 0$, and therefore $F(s)$ is differentiable at $s_0 \in \mathbb{C}$. Therefore, since the choice of $s_0$ was arbitrary, $F(s)$ is entire by Goursat's Theorem.\fi
}

\begin{corollary} \label{thm:anydelta}
Given arbitrary $\delta_1 > \delta_2 > 0$, $\tilde{\zeta}_A(s,\delta_1) - \tilde{\zeta}_A(s,\delta_2)$ extends to an entire function.
\end{corollary}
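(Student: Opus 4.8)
The plan is to reduce the statement directly to the Entire Extension Theorem (Theorem \ref{thm:entireextension}). First I would observe that, on the common half-plane of convergence $\{s \in \mathbb{C} : \re(s) > D\}$ where both truncated zeta functions are given by their defining integrals, additivity of the Lebesgue integral over the partition $[0,\delta_1] = [0,\delta_2] \cup [\delta_2,\delta_1]$ yields
\begin{equation}
\tilde{\zeta}_A(s,\delta_1) - \tilde{\zeta}_A(s,\delta_2) = \int_{\delta_2}^{\delta_1} t^{s-N-1} |A_t| \,dt.
\end{equation}
This cancellation is legitimate because each of the two integrals converges absolutely on this half-plane (by \cite[Theorem 2.2.1]{lapidusbook}), so the shared contribution over $[0,\delta_2]$ cancels exactly, leaving only the integral over the compact sub-interval $[\delta_2,\delta_1]$, which is bounded away from the origin.

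Next I would apply Theorem \ref{thm:entireextension} with $\alpha = \delta_2$ and $\beta = \delta_1$; the hypothesis $\beta > \alpha > 0$ is precisely the standing assumption $\delta_1 > \delta_2 > 0$. The theorem then produces an entire function $F(s)$ on all of $\mathbb{C}$ that agrees with the right-hand side on the half-plane of convergence. Finally, I would invoke the principle of analytic continuation: the meromorphic continuation of the difference $\tilde{\zeta}_A(s,\delta_1) - \tilde{\zeta}_A(s,\delta_2)$ is unique, so it must coincide with $F(s)$, and hence the difference extends to an entire function, as claimed.

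The main obstacle here is not a genuine analytic difficulty but rather a bookkeeping point: one must be careful that the cancellation identity is asserted on the region where both sides are literally given by their integral representations, and that the entire extension so obtained is genuinely the same object as the (a priori only meromorphic) continuation of the difference of the two individual zeta functions. Because those continuations are unique by analytic continuation, the identity established on $\{\re(s) > D\}$ propagates to the entire complex plane wherever the continuations are defined, so the reduction to Theorem \ref{thm:entireextension} is complete and no additional estimates beyond those already proved are required.
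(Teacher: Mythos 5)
Your proposal is correct and follows essentially the same route as the paper: split the integral over $[0,\delta_1]$ into the pieces over $[0,\delta_2]$ and $[\delta_2,\delta_1]$, and then apply Theorem \ref{thm:entireextension} with $\alpha = \delta_2$ and $\beta = \delta_1$. Your extra care about asserting the cancellation on the common half-plane of convergence and invoking uniqueness of analytic continuation is a sound (and slightly more rigorous) elaboration of the same argument, not a different method; incidentally, the paper's displayed equation has its integration limits written in the reversed order, which your version corrects.
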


\begin{proof}
Consider two choices $\delta_1,\delta_2$ with $\delta_1 > \delta_2 > 0$. Then we obtain the following:
\begin{equation}
\tilde{\zeta}_A(s,\delta_1) - \tilde{\zeta}_A(s,\delta_2)
= \int_0^{\delta_1} t^{s-N-1} |A_t| \,\mathrm{d}t - \int_0^{\delta_2} t^{s-N-1} |A_t| \,\mathrm{d}t
= \int_{\delta_1}^{\delta_2} t^{s-N-1} |A_t| \,\mathrm{d}t. \label{eq:proofofcorollary}
\end{equation}
By Theorem \ref{thm:entireextension}, Equation (\ref{eq:proofofcorollary}) extends to an entire function. Therefore, $\tilde{\zeta}_A(s,\delta_1) - \tilde{\zeta}_A(s,\delta_2)$ extends to an entire function.
\end{proof}

By Corollary \ref{thm:anydelta}, we no longer need to notate $\delta$ in the tubular zeta function formula, and as such we will simply write the following for any $A \subseteq \mathbb{R}^N$ with $N \in \mathbb{N}$:
\begin{equation}
\tilde{\zeta}_A(s) = \int_0^{\delta} t^{s-N-1} |A_t| \,\mathrm{d}t.
\end{equation}
\newparagraph

\subsection{Threshold Theorems} \label{chap:thresholdtheorem}
Theorem \ref{thm:thresholdexists} can apply to all nonempty compact subsets of $\mathbb{R}^2$ whose complements are path-connected and which are not path-connected. Theorem \ref{thm:infinitethresholdsexist} applies to Sierpi\'nski Carpet modifications of dust type, specifically leveraging their self-similarity.

%Since disconnected, there exists a point where Inflated Neighborhood is no longer path-connected, that repeats infinitely-many times because self-similar

\begin{theorem}[Threshold Theorem] \label{thm:thresholdexists}
Let $A \subseteq \mathbb{R}^2$ be a nonempty compact set with a path-connected complement and which is itself not path-connected. Then there exists a maximal $T > 0$ where the set $A_T = \{x \in \mathbb{R}^2 : d(x,A) < T\}$ is not path-connected.
\end{theorem}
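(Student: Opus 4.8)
The plan is to reformulate the statement in terms of the monotone behaviour of connectivity and then show that the transition value is attained. Throughout I write $A_t = \{x \in \mathbb{R}^2 : d(x,A) < t\}$; since this is the strict sublevel set of the continuous function $x \mapsto d(x,A)$, each $A_t$ is open, so for $A_t$ connectedness and path-connectedness coincide. Set $S = \{t > 0 : A_t \text{ is not path-connected}\}$; the claim is exactly that $S$ has a maximum. I would prove this by establishing that $S = (0,T]$ for some $0 < T < \infty$.

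First I would record two ingredients. (Monotonicity) If $A_{t_1}$ is path-connected and $t_2 > t_1$, then $A_{t_2}$ is path-connected: every $x \in A_{t_2}$ is joined to a nearest point $a \in A$ by the straight segment $[x,a]$, which lies in $A_{t_2}$ since $d(z,A) \le |z-a| \le |x-a| = d(x,A) < t_2$ for each $z \in [x,a]$; as $A \subseteq A_{t_1} \subseteq A_{t_2}$ and $A_{t_1}$ is connected, any two points of $A_{t_2}$ are joined through $A_{t_1}$. Hence $S$ is downward closed, i.e.\ an interval $(0,T)$ or $(0,T]$. (Endpoints) For small $t$, $A_t$ is disconnected: since $A$ is of Dust Type it is not path-connected, and being a compact self-similar set (a connected self-similar set is automatically path-connected) this forces $A$ to be disconnected, so $A = A_1 \sqcup A_2$ with $A_i$ compact and $\rho := d(A_1,A_2) > 0$; for $t < \rho/2$ one checks that $A_t = (A_1)_t \sqcup (A_2)_t$ is a disjoint union of nonempty open sets. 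For $t > \diam(A)$, the segment argument shows all of $A$, and hence all of $A_t$, lies in one component, so $A_t$ is connected. Thus $S \neq \emptyset$ and $0 < T := \sup S \le \diam(A) \le \sqrt{2} < \infty$.

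The crux is to show the supremum is attained, i.e.\ that $A_T$ is itself not path-connected; equivalently, that $G := (0,\infty) \setminus S$ (the connected times) is open. Here I would prove the key lemma: if $A_{t_0}$ is connected then $A_{t^*}$ is connected for some $t^* < t_0$. Fix $a_0 \in A$ and for $t < t_0$ let $C_t \subseteq A$ be the set of points of $A$ lying in the same connected component of $A_t$ as $a_0$. Each $C_t$ is relatively open in $A$ (a component of the open set $A_t$ is open), the family is increasing in $t$, and $\bigcup_{t<t_0} C_t = A$: any $a \in A$ is joined to $a_0$ by a path in the connected open set $A_{t_0}$, and that path, being compact, lies in $A_{t_0 - \eta}$ for some $\eta > 0$. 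Since $A$ is compact, this increasing open cover admits a finite subcover, so $A = C_{t^*}$ for a single $t^* < t_0$; then the segment argument (each point of $A_{t^*}$ connects to $A$, which now lies in one component) shows $A_{t^*}$ is connected. Consequently $G$ is upward closed with no least element, hence an open ray $(T,\infty)$, so $S = (0,T]$ and $T = \max S$, completing the proof.

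I expect the main obstacle to be precisely this attainment step: monotonicity and the two endpoint estimates are routine, but showing that $A_T$ remains disconnected requires ruling out that connectivity is achieved only ``in the limit.'' The delicate point is uniformizing the slack $\eta$ over all of $A$ simultaneously, which is why I route the argument through the relatively open sets $C_t$ and compactness rather than handling pairs of points individually; the openness of each $A_t$ (the strict inequality in its definition) is what guarantees that at the threshold the separated pieces touch without overlapping and so stay disconnected.
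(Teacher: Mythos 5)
Your proof is correct, and it takes a genuinely more complete route than the paper's. The paper's argument consists exactly of your two ``routine'' ingredients: it shows $\mathscr{T} = \{t > 0 : A_t \text{ is not path-connected}\}$ is nonempty (two pieces of $A$ lie at positive distance) and bounded above by $\sqrt{2}$, and then, by completeness of $\mathbb{R}$, declares $T = \sup(\mathscr{T})$ to be ``the maximal number where $A_T$ is not path-connected.'' The attainment step you single out as the crux --- showing $T \in \mathscr{T}$, i.e.\ that the supremum is actually a maximum --- is not addressed in the paper at all, so your key lemma (if $A_{t_0}$ is connected then $A_{t^*}$ is connected for some $t^* < t_0$, via the increasing, relatively open exhaustion $C_t$ of the compact set $A$), combined with your monotonicity step, is precisely the content that the paper's one-line conclusion silently requires. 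Two smaller points where your version is also the more careful one: (i) you disconnect $A_t$ for small $t$ using a clopen separation $A = A_1 \sqcup A_2$, whereas the paper picks two connected components $E_1, E_2$ and sets $T = d(E_1,E_2)/2$, which is insufficient as stated because neighborhoods of \emph{other} components of $A$ can bridge $(E_1)_T$ to $(E_2)_T$ (consider $\{0\} \cup \{1/2\} \cup \{1\} \subseteq \mathbb{R}$ with $t$ just under $1/2$); and (ii) both proofs need ``not path-connected $\Rightarrow$ disconnected,'' which fails for general compacta (topologist's sine curve) --- your parenthetical appeal to the fact that a connected self-similar set is path-connected (Hata's theorem) is the right fix and should carry a citation, while the paper leaves the point implicit. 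In short, the paper's approach buys brevity but proves only that $\sup\mathscr{T}$ exists; yours buys the stated maximality, at the cost of the open-cover compactness argument, which is unavoidable since the openness of each $A_t$ and the compactness of $A$ are exactly what rule out connectivity being achieved only in the limit.
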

\begin{proof}
Consider the set $\mathscr{T} = \{t > 0 : A_t \text{ is not path-connected}\}$. Since $A$ is not path connected, we can consider two path-connected components $E_1$ and $E_2$ of $A$ for which there is no continuous path from a point in $E_1$ to a point in $E_2$ which goes through $A$. Since $A$ is compact, so are $E_1$ and $E_2$ by properties of $\mathbb{R}^2$ with the standard Euclidean topology. Since there is no continuous path from $E_1$ to $E_2$, they must be disjoint, and therefore setting
\begin{equation}
T = \frac{1}{2} \cdot \inf_{x\in E_1, y\in E_2} \{d(x,y)\} > 0,
\end{equation}
will guarantee that $A_T$ is not path-connected. Therefore, $\mathscr{T} \neq \emptyset$ since $T \in \mathscr{T}$. \\

Furthermore, by construction of $A$, we have that $\displaystyle \diam(A) = \sup_{x\in A, y \in A}\{d(x,y)\}$ is finite since $A \subseteq [0,1]\times[0,1]$. Thus, by choosing $\delta > \sqrt{2}$, we deduce that $A \subseteq B(x, \delta)$ for any $x \in A$, which shows that $A_{\delta}$ is simply-connected (and consequently it is path-connected). Thus, we have that $\mathscr{T}$ is bounded above by $\sqrt{2}$. Therefore, by completeness of $\mathbb{R}$, there exists a value $T = \sup(\mathscr{T})$ which is the maximal number where $A_T$ is not path-connected.
\end{proof}
\newparagraph

From here onwards, we will assume that $\delta > 0$ is large enough that $A_{\delta}$ is simply-connected (the proof of Theorem \ref{thm:thresholdexists} justifies that this is always possible). By Corollary \ref{thm:anydelta}, this assumption will have no impact on the poles of the tubular zeta function.
\newparagraph

\begin{lemma} \label{thm:inflationlemma}
Let $A \subseteq \mathbb{R}^N$ for $N \in \mathbb{N}$ and let $\lambda > 0$. Then for any $t \geq 0$:
\begin{equation}
(\lambda A)_t = \lambda \left(A_{\frac{1}{\lambda} t}\right).
\end{equation}
\end{lemma}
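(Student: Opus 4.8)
The plan is to prove the set equality by establishing mutual containment, with the entire argument resting on the elementary fact that scaling by $\lambda > 0$ multiplies Euclidean distances by $\lambda$. First I would record the distance-scaling identity: for any $x \in \mathbb{R}^N$, since $d(\lambda x, \lambda a) = \lambda\, d(x,a)$ for every $a \in A$, and since taking the image under the scaling map $a \mapsto \lambda a$ identifies $\lambda A$ with $A$, we have $d(\lambda x, \lambda A) = \inf_{a \in A} d(\lambda x, \lambda a) = \inf_{a \in A} \lambda\, d(x,a) = \lambda\, d(x,A)$. Here $\lambda > 0$ is essential, both so that the scaling map is a bijection of $\mathbb{R}^N$ and so that the positive factor $\lambda$ may be pulled outside the infimum without disturbing it.

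Next, unwinding the definition $A_t = \{y \in \mathbb{R}^N : d(y,A) < t\}$, I would take an arbitrary point $y \in (\lambda A)_t$, so that $d(y, \lambda A) < t$. Because $\lambda > 0$, I may write $y = \lambda x$ with $x = \lambda^{-1} y$, and the distance-scaling identity converts $d(y, \lambda A) < t$ into $\lambda\, d(x,A) < t$, that is, $d(x,A) < \lambda^{-1} t$. This says precisely that $x \in A_{\lambda^{-1} t}$, whence $y = \lambda x \in \lambda\bigl(A_{\lambda^{-1} t}\bigr)$. Every step here is reversible, which yields the opposite containment at once: if $y = \lambda x$ with $d(x,A) < \lambda^{-1} t$, then $d(y, \lambda A) = \lambda\, d(x,A) < \lambda \cdot \lambda^{-1} t = t$, so $y \in (\lambda A)_t$. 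Combining the two inclusions gives the claimed equality.

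The only point requiring a moment of care is the degenerate case $t = 0$, where both $A_0$ and $(\lambda A)_0$ are empty, since the defining strict inequality $d(\cdot,\cdot) < 0$ can never hold; the identity is then trivially $\emptyset = \lambda\,\emptyset = \emptyset$, and the substantive content is entirely for $t > 0$. I do not anticipate any genuine obstacle: the lemma is a bookkeeping consequence of the homogeneity of the metric under positive scaling, and the fact that $A_t$ is defined by a strict rather than a non-strict inequality causes no difficulty precisely because the scaling factor is positive, so the inequality $\lambda\, d(x,A) < t$ and the inequality $d(x,A) < \lambda^{-1} t$ are exactly equivalent.
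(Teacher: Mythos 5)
Your proof is correct and takes essentially the same route as the paper: both arguments reduce to the homogeneity identity $d(\lambda y, \lambda A) = \lambda\, d(y,A)$ combined with the substitution $y = \lambda x$, the paper writing this as a chain of set equalities and you as two mutually inverse inclusions. Your explicit justification of the infimum identity and the remark on the $t=0$ case are minor refinements of the same argument, not a different approach.
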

\begin{proof}
Consider the set $(\lambda A)_t = \{x \in \mathbb{R}^N : d(x,\lambda A) < t\}$. Making the substitution $x = \lambda y$ yields:
\begin{align*}
(\lambda A)_t &= \{\lambda y \in \mathbb{R}^N : d(\lambda y, \lambda A) < t\}\\
&= \{\lambda y \in \mathbb{R}^N : \lambda d(y, A) < t\}\\
&= \left\{\lambda y \in \mathbb{R}^N : d(y, A) < \frac{1}{\lambda} t\right\}\\
&= \lambda \left\{y \in \mathbb{R}^N : d(y, A) < \frac{1}{\lambda} t\right\}\\
&= \lambda \left(A_{\frac{1}{\lambda} t}\right).
\end{align*}
Therefore, $\displaystyle (\lambda A)_t = \lambda \left(A_{\frac{1}{\lambda} t}\right)$.
\end{proof}
\newparagraph

\begin{theorem}[Infinite Thresholds Theorem] \label{thm:infinitethresholdsexist}
Let $A \subseteq \mathbb{R}^2$ be a Sierpi\'nski Carpet modification of dust type constructed by keeping $m \geq 2$ squares from a $p \times p$ grid. Let $T > 0$ be the maximal number such that $A_T$ is not path-connected, as guaranteed by Theorem \ref{thm:thresholdexists}. Then for each $n \in \mathbb{N} \cup \{0\}$, $\frac{T}{p^{n+1}}$ is the maximal number such that every path-connected component of $A_{\frac{T}{p^n}}$ is not path-connected.
\end{theorem}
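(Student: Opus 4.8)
The plan is to induct on $n$, using the self-similar structure of $A$ together with the Inflation Lemma (Lemma \ref{thm:inflationlemma}) to transport the disconnection threshold from one scale to the next, with the base case $n = 0$ supplied by the Threshold Theorem (Theorem \ref{thm:thresholdexists}). Writing $A = \bigcup_{i=1}^m \phi_i(A)$, where each $\phi_i$ is the similarity of ratio $1/p$ carrying the unit square onto the $i$-th retained subsquare, the identity $(S \cup S')_t = S_t \cup S'_t$ for neighborhoods, combined with Lemma \ref{thm:inflationlemma} in its translated form $(\phi_i(A))_t = \phi_i(A_{pt})$, yields the scaling relation $A_t = \bigcup_{i=1}^m \phi_i(A_{pt})$, and more generally $A_{T/p^n} = \bigcup_{\vec\imath} \phi_{\vec\imath}(A_T)$ over all length-$n$ cell addresses $\vec\imath$. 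Thus every neighborhood at scale $T/p^n$ is literally a union of $1/p^n$-scaled isometric copies of the threshold neighborhood $A_T$.

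First I would establish that each connected component of $A_{T/p^n}$ is a $1/p^n$-scaled copy of a connected component of $A_T$. Since $\phi_{\vec\imath}$ is a homeomorphism, it preserves the component structure of $A_T$; the only way the statement could fail is if copies sitting in distinct cells merged into a single component of the union. I would rule this out using the Dust Type hypothesis: because $A^C$ is path-connected while $A$ is not, the retained squares leave genuine gaps, and the maximality of $T$ in Theorem \ref{thm:thresholdexists} forces the separation governing the disconnection at scale $T$ to be exactly critical. Applying the similarity $\phi_{\vec\imath}$, which by Theorem \ref{thm:scaling} and Lemma \ref{thm:inflationlemma} rescales both the set and the neighborhood parameter by $1/p^n$, transports this critical separation to scale $T/p^n$, so that within each cell the copy $\phi_{\vec\imath}(A_T)$ is disconnected in the same pattern as $A_T$ and no cross-cell gluing occurs. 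This identifies the components of $A_{T/p^n}$ precisely.

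With the component structure in hand, the threshold claim follows by one more application of scaling. Each connected component $C$ of $A_{T/p^n}$ is $\phi_{\vec\imath}$ applied to a component of $A_T$, and a component of $A_T$ contains, again by self-similarity, the $T$-neighborhood of a once-smaller copy of $A$; shrinking the neighborhood parameter refines this copy, and its disconnection occurs at precisely $1/p$ of the coarse threshold. Concretely, I would show that the finer neighborhood meets $C$ in a connected set for $t \in (T/p^{n+1}, T/p^n]$ and in a disconnected set at $t = T/p^{n+1}$, by pulling the whole situation back through $\phi_{\vec\imath}^{-1}$ to scale $T$, where it becomes the $n=0$ instance, and invoking the base case. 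The base case is exactly Theorem \ref{thm:thresholdexists}, which gives $T$ as the maximal scale at which $A_T$ is disconnected; the next finer threshold $T/p$ then emerges from applying the self-similar reduction once.

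The main obstacle I anticipate is the second step: rigorously excluding cross-cell merging and proving that the relevant geometric separations scale exactly by $1/p$. The Inflation Lemma handles the rescaling of a single copy cleanly, but controlling the union — ensuring that the minimal gap responsible for the disconnection of $A_{T/p^n}$ is attained \emph{within} a single cell rather than \emph{between} two neighbouring cells — requires the Dust Type geometry and a careful comparison of intra-cell versus inter-cell distances. Establishing that the inter-cell gaps are never the binding constraint at these particular scales is the crux; once that is secured, Theorem \ref{thm:scaling} and Lemma \ref{thm:inflationlemma} make the threshold cascade $T \mapsto T/p \mapsto \cdots \mapsto T/p^{n+1}$ purely mechanical.
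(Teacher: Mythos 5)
Your scaling setup is sound and is the same as the paper's: translation invariance plus Lemma \ref{thm:inflationlemma} give $A_{T/p^n}=\bigcup_{\vec\imath}\phi_{\vec\imath}(A_T)$ over the $m^n$ level-$n$ cells. The genuine gap is your first step: the claim that ``no cross-cell gluing occurs,'' so that every connected component of $A_{T/p^n}$ is a single scaled copy $\phi_{\vec\imath}(C)$ of a component $C$ of $A_T$. This is false for general Dust Type carpets, and the Dust Type hypothesis cannot rescue it: Dust Type only says that $A$ is not path-connected while $A^C$ is; it does not prevent two retained squares from sharing an edge or a corner. When they do, the fractal pieces in the two cells can come arbitrarily close to (or even touch at) the shared boundary, so the copies $\phi_{\vec\imath}(A_T)$ and $\phi_{\vec\jmath}(A_T)$ overlap and merge into one component of $A_{T/p^n}$ spanning several cells. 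Far from being a pathology, this is the generic case the paper is written for: Section \ref{chap:computingdust} opens by stressing that $A_\alpha$ consists of $m$ copies of $A_{p\alpha}$ which ``may not be disjoint from each other,'' and the whole intersection-type apparatus (Figure \ref{fig:intersectiontypes}, the areas $\left|\mathfrak{I}_t^{type}\right|$ assumed nonzero on a positive-measure set of $t\in[0,\alpha]$) exists precisely to account for such merging at scales up to $T=\alpha$; the colored $D$, $H$, $V$ markings in Figures \ref{fig:keyexamplelevel2} and \ref{fig:keyexamplelevel3} are exactly records of cross-cell gluing in a Dust Type example. Your claim is essentially the definition of the Completely Dusty case (Definition \ref{def:alphaconnected}), which is the easy case already settled by Theorem \ref{thm:alphaconnected}; and your proposed repair (``inter-cell gaps are never the binding constraint'') is backwards, since for adjacent cells the inter-cell gap can be zero, so gluing occurs at every positive scale.

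Because the component identification fails, the pullback step fails with it: a component that is a union of several glued copies of $\phi_{\vec\imath}(A_T)$ cannot be pulled back through a single $\phi_{\vec\imath}^{-1}$ to the $n=0$ case. This is exactly where the paper's proof does something different. It writes each path component $(E_j)_{T/p^n}$ of $A_{T/p^n}$ as a union $\bigcup_{s\in S_j}\bigl((\tfrac{1}{p^{n+1}}A)_{T/p^n}+(x_s,y_s)\bigr)$ over an index set $S_j\subseteq\{1,\dots,m^{n+1}\}$ that may contain many cells, then lowers the parameter to $T/p^{n+1}$ and applies Lemma \ref{thm:inflationlemma} to each constituent copy, turning it into $\tfrac{1}{p^{n+1}}A_T+(x_s,y_s)$, and finally invokes the maximality of $T$ for the union so assembled. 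To salvage your induction you would have to prove the threshold statement for a union of several scaled copies of $A_T$ glued along their overlaps, not for a single copy; that union-level argument is the actual content of the theorem, and it is the step your proposal assumes away.
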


\begin{proof}
Let $n \in \mathbb{N}$. Since $A$ is constructed by keeping $m$ squares from a $p \times p$ grid, we have a self-similar fractal which can be written as follows:
\begin{equation}
A = \bigcup_{i=1}^m \left(\frac{1}{p} A + (x_i,y_i)\right),
\end{equation}
where $(x_i,y_i)$ are the corners of the subsquares and where we translate $\frac{1}{p} A$ in order to line up with the grid. Since this is a recursive definition, we have the following:
\begin{equation}
A = \bigcup_{j=1}^{m^{n+1}} \left(\frac{1}{p^{n+1}} A + (x_j,y_j)\right),
\end{equation}
where $(x_j,y_j)$ are the corners of the subsquares and where we translate $\frac{1}{p^{n+1}} A$ in order to line up with the grid. Since the Euclidean metric is translation-invariant in $\mathbb{R}^2$, we have the following for each $t \in [0,\delta]$:
\begin{equation} \label{eq:selfsimilardef}
A_t = \bigcup_{j=1}^{m^{n+1}} \left(\left(\frac{1}{p^{n+1}} A\right)_t + (x_j,y_j)\right).
\end{equation}
Let $T > 0$ be the maximal number such that $A_T$ is not path-connected, as guaranteed by Theorem \ref{thm:thresholdexists}. Then by properties of $\mathbb{R}^2$, we have 
\begin{equation}
A_{\frac{T}{p^n}} = (E_1)_{\frac{T}{p^n}} \cup (E_2)_{\frac{T}{p^n}} \cup\dots\cup (E_k)_{\frac{T}{p^n}},
\end{equation}
for some $k \leq m^n$, where each $(E_i)_{\frac{T}{p^n}} \cap (E_j)_{\frac{T}{p^n}} = \emptyset$ for $i \neq j$ and each $(E_j)_{\frac{T}{p^n}}$ is path-connected. Thus, for each $j = 1,\dots,k$, we have
\begin{equation}
(E_j)_{\frac{T}{p^n}} = \bigcup_{s \in S_j} \left(\left(\frac{1}{p^{n+1}} A\right)_{\frac{T}{p^n}} + (x_s,y_s)\right),
\end{equation}
for some $S_j \subseteq \{1,2,\dots,m^{n+1}\}$. By Lemma \ref{thm:inflationlemma}, we obtain the following:
\begin{equation}
(E_j)_{\frac{T}{p^{n+1}}} = \bigcup_{s \in S_j} \left(\left(\frac{1}{p^{n+1}} A\right)_{\frac{T}{p^{n+1}}} + (x_s,y_s)\right)
= \bigcup_{s \in S_j} \left(\frac{1}{p^{n+1}} A_T + (x_s,y_s)\right).
\end{equation}
Since $t = T$ is the maximal number such that $A_t$ is not path-connected, we therefore have by definition of $E_j$ that $t = \frac{T}{p^{n+1}}$ is the maximal number such that $(E_j)_t$ is not path-connected. This applies for each $j = 1,\dots,k$, and so $\frac{T}{p^{n+1}}$ is the maximal number such that the path-connected components of $A_{\frac{T}{p^n}}$ are no longer path-connected.
\end{proof}

\section{Simplest Case: Completely Dusty} \label{chap:alphaconnected}
The upcoming Theorem \ref{thm:alphaconnected} is intended to handle the simplest of dust type Sierpi\'nski Carpet modifications and highlight the techniques that will be used later.

\begin{definition}[Completely Dusty] \label{def:alphaconnected}
Let $A$ be a dust type Sierpi\'nski Carpet modification constructed from a $p \times p$ grid, and let $\alpha > 0$ be the maximal number such that $A_{\alpha}$ is not path-connected (which exists by Theorem \ref{thm:thresholdexists}). $A$ is \textit{completely dusty} if $A_{\alpha}$ is the union of $m$ disjoint copies of $\frac{1}{p} \left(A_{p\alpha}\right)$. %The inflated neighborhood breaks apart completely into m self-similar pieces
\end{definition}

The Ternary Cantor Set discussed earlier in Figure \ref{fig:cantorset} is an example of a completely dusty carpet. It is clear that $\alpha = \frac{1}{6}$ has the property that $A_{\frac{1}{6}}$ consists of 2 disjoint copies of $A_{\frac{1}{2}}$ which are scaled down by a factor of $\frac{1}{3}$, as shown in Figure \ref{fig:cantorsetalphaconnected}.
\begin{figure}[H]
\includegraphics[scale=0.3]{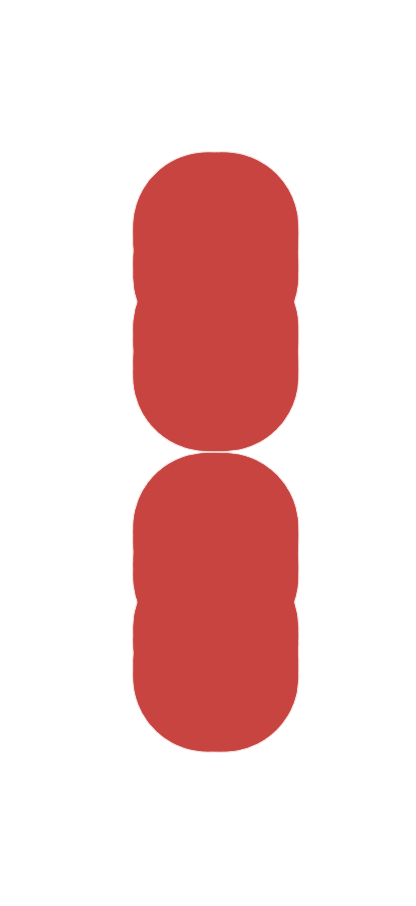}
\includegraphics[scale=0.3]{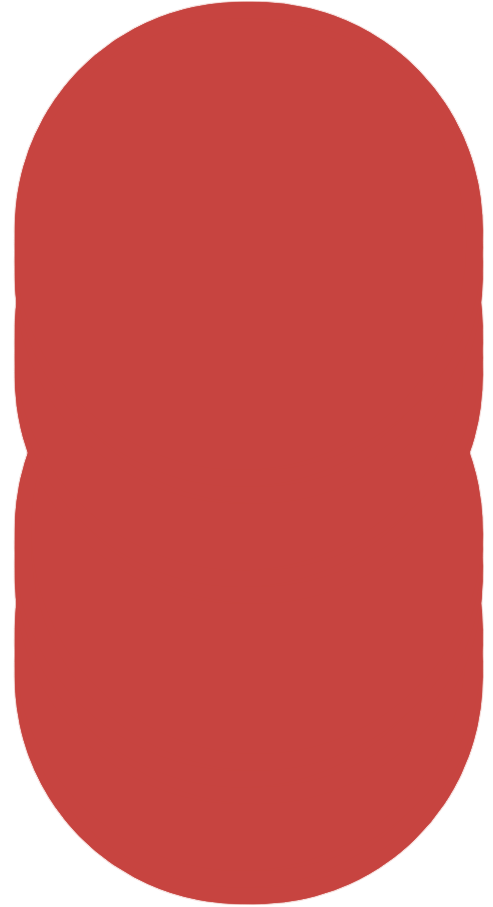}
\caption{The Ternary Cantor Set with inflated neighborhood shown at $t = \frac{1}{6}, \frac{1}{2}$.}
\label{fig:cantorsetalphaconnected}
\end{figure}

\begin{theorem}[Completely Dusty Theorem] \label{thm:alphaconnected}
Let $A$ be a completely dusty Sierpi\'nski Carpet modification constructed by keeping $m \geq 2$ squares from the $p \times p$ grid. Then $\tilde{\zeta}_A$ has a meromorphic continuation to all of $\mathbb{C}$, and
\begin{equation}
\mathscr{P}(A) \subseteq \left\{\log_p(m) + \frac{2\pi i j}{\ln(p)}: j \in \mathbb{Z}\right\}.
\end{equation}
\end{theorem}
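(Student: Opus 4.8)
The plan is to exploit the self-similar structure encoded in the Completely Dusty hypothesis to derive a functional equation for $\tilde{\zeta}_A$ and then solve it. Writing $N = 2$ and fixing $\delta$ large enough that $A_\delta$ is simply connected, I would first split the defining integral at the threshold $\alpha$ from Definition \ref{def:alphaconnected}:
\begin{equation}
\tilde{\zeta}_A(s) = \int_0^{\alpha} t^{s-3}|A_t|\,dt + \int_{\alpha}^{\delta} t^{s-3}|A_t|\,dt.
\end{equation}
By the Entire Extension Theorem (Theorem \ref{thm:entireextension}), the second integral extends to an entire function; call it $E_1(s)$. All pole information therefore lives in the first integral.

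Next I would establish the key area identity $|A_t| = \frac{m}{p^2}|A_{pt}|$ for every $t \in (0,\alpha]$. Since $A = \bigcup_{i=1}^m \left(\frac{1}{p} A + v_i\right)$ for the appropriate corner translations $v_i$, translation-invariance of the Euclidean metric together with the Inflation Lemma (Lemma \ref{thm:inflationlemma}) gives $A_t = \bigcup_{i=1}^m \left(\frac{1}{p} A_{pt} + v_i\right)$ for all $t$. The Completely Dusty hypothesis says exactly that this union is disjoint at $t = \alpha$; and because $A_{pt} \subseteq A_{p\alpha}$ when $t \le \alpha$, each piece at parameter $t$ sits inside the corresponding (disjoint) piece at parameter $\alpha$, so the union stays disjoint on all of $(0,\alpha]$. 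Hence $|A_t| = m\left|\frac{1}{p} A_{pt}\right| = \frac{m}{p^2}|A_{pt}|$ there. This step is where the hypothesis does its work, and it is the part I expect to require the most care: one must verify that disjointness does not fail for smaller $t$, rather than assuming it only at the single value $t=\alpha$ where it is postulated.

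Then I would substitute $\tau = pt$ in the first integral. A short computation collapses the prefactor $\frac{m}{p^2}$, the Jacobian, and $t^{s-3} = (\tau/p)^{s-3}$ into a single factor $m p^{-s}$, yielding
\begin{equation}
\int_0^{\alpha} t^{s-3}|A_t|\,dt = \frac{m}{p^s}\int_0^{p\alpha}\tau^{s-3}|A_\tau|\,d\tau = \frac{m}{p^s}\,\tilde{\zeta}_A(s,p\alpha).
\end{equation}
Using Corollary \ref{thm:anydelta}, I may write $\tilde{\zeta}_A(s,p\alpha) = \tilde{\zeta}_A(s) + E_2(s)$ for an entire $E_2$ accounting for the mismatch between $p\alpha$ and the fixed $\delta$. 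Substituting back produces the functional equation $\tilde{\zeta}_A(s) = \frac{m}{p^s}\bigl(\tilde{\zeta}_A(s) + E_2(s)\bigr) + E_1(s)$.

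Finally I would solve for $\tilde{\zeta}_A(s)$, obtaining
\begin{equation}
\tilde{\zeta}_A(s) = \frac{m\,E_2(s) + p^s E_1(s)}{p^s - m},
\end{equation}
whose numerator is entire (as $p^s = e^{s\ln p}$ is entire) and whose denominator $p^s - m$ is entire. This exhibits the claimed meromorphic continuation to all of $\mathbb{C}$ and confines the poles to the zero set of $p^s - m$. Solving $p^s = m$ gives $s\ln(p) = \ln(m) + 2\pi i z$, i.e. $s = \log_p(m) + \frac{2\pi i z}{\ln(p)}$ for $z \in \mathbb{Z}$; the inclusion rather than equality in the statement reflects the possibility that the entire numerator vanishes at some of these points, cancelling the corresponding pole.
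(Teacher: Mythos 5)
Your proposal is correct, and it reaches the theorem by a genuinely different route than the paper. The paper dissects $(0,\alpha]$ into the scale intervals $\left[\alpha/p^k,\alpha/p^{k-1}\right]$, applies the iterated area identity $|A_t| = m^k\left|\tfrac{1}{p^k}\left(A_{p^k t}\right)\right|$ on the $k$-th interval (justified by Definition \ref{def:alphaconnected} together with the Infinite Thresholds Theorem, Theorem \ref{thm:infinitethresholdsexist}), rescales every piece onto the common window $[\alpha,p\alpha]$ via the Scaling Theorem, and sums the geometric series $\sum_{k\geq 1} m^k p^{-ks} = \frac{m}{p^s-m}$, arriving at
\begin{equation}
\tilde{\zeta}_A(s) = \int_{\alpha}^{\delta} t^{s-3}\tilde{A}(t)\,dt + \frac{m}{p^s-m}\int_{\alpha}^{p\alpha} t^{s-3}\tilde{A}(t)\,dt.
\end{equation}
You instead split only once at $\alpha$, use the area identity a single time to obtain the functional equation $\tilde{\zeta}_A(s) = \frac{m}{p^s}\tilde{\zeta}_A(s,p\alpha) + E_1(s)$, and solve it algebraically, so the factor $p^s-m$ emerges from the algebra rather than from summing a series; the two are of course two faces of the same self-similarity, since solving your functional equation by iteration reproduces the paper's geometric series. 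Your organization is leaner in one notable respect: your monotonicity argument --- that the disjointness of the translates $\frac{1}{p}A_{p\alpha}+v_i$ postulated at $t=\alpha$ is inherited by the smaller sets $\frac{1}{p}A_{pt}+v_i$ for every $t\leq\alpha$ --- is a clean, self-contained replacement for the paper's appeal to Theorem \ref{thm:infinitethresholdsexist}, and you correctly identified that propagation step as the crux rather than assuming the identity only at $t=\alpha$. What the paper's longer form buys in exchange is explicitness: because the pole factor multiplies the concrete integral $\int_{\alpha}^{p\alpha} t^{s-3}\tilde{A}(t)\,dt$, which is strictly positive at $s=\log_p(m)$, the paper can observe in the remark following the proof that the principal pole is never cancelled; that conclusion is not visible from your packaged entire functions $E_1$ and $E_2$, though it could be recovered by unpacking them. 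One small point worth making explicit in a final write-up: the functional equation is first valid only on the half-plane $\re(s) > \log_p(m)$ where the integrals converge, and the solved expression then \emph{is} the meromorphic continuation by uniqueness of analytic continuation --- the same implicit step the paper takes when it replaces its series by $\frac{m}{p^s-m}$.
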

\begin{proof}
Let $\alpha > 0$ be the maximal number such that $A_{\alpha}$ is not path-connected, as given by Theorem \ref{thm:thresholdexists}. Consider the function $\tilde{A}(t) : [\alpha, p\alpha] \rightarrow \mathbb{R}$ such that $\tilde{A}(t) = |A_t|$ whenever $t \in [\alpha,p\alpha]$. Then by Definition \ref{def:alphaconnected} and Theorem \ref{thm:infinitethresholdsexist}, we have that whenever $t \in \left[\frac{\alpha}{p^k}, \frac{\alpha}{p^{k-1}}\right]$ for some $k \in \mathbb{N}$, we obtain the following:
\begin{equation}
|A_t| = m \left|\left(\frac{1}{p} A\right)_{p t}\right| =\dots= m^k \left|\left(\frac{1}{p^k} A\right)_{p^k t}\right|.
\end{equation}
We can assume without loss of generality that $\alpha < \delta \leq p\alpha$. Plugging this into the tubular zeta function formula and using Theorem \ref{thm:scaling} yields:
\begin{align}
\tilde{\zeta}_A(s)
&= \int_{\alpha}^{\delta} t^{s-3} |A_t| \,\mathrm{d}t + \sum_{k=1}^{\infty} \int_{\frac{\alpha}{p^k}}^{\frac{\alpha}{p^{k-1}}} t^{s-3} |A_t| \,\mathrm{d}t \\
&= \int_{\alpha}^{\delta} t^{s-3} \tilde{A}(t) \,\mathrm{d}t + \sum_{k=1}^{\infty} \int_{\frac{\alpha}{p^k}}^{\frac{\alpha}{p^{k-1}}} t^{s-3} m^k \left|\left(\frac{1}{p^k} A\right)_{p^k t}\right| \,\mathrm{d}t\\
&= \int_{\alpha}^{\delta} t^{s-3} \tilde{A}(t) \,\mathrm{d}t + \sum_{k=1}^{\infty} \int_0^{\frac{\alpha}{p^{k-1}}} t^{s-3} m^k \left|\left(\frac{1}{p^k} A\right)_{p^k t}\right| \,\mathrm{d}t \nonumber\\&- \int_0^{\frac{\alpha}{p^k}} t^{s-3} m^k \left|\left(\frac{1}{p^k} A\right)_{p^k t}\right| \,\mathrm{d}t \\
&= \int_{\alpha}^{\delta} t^{s-3} \tilde{A}(t) \,\mathrm{d}t + \sum_{k=1}^{\infty} \frac{1}{p^{k s}} \int_0^{p\alpha} t^{s-3} m^k \left|A_t\right| \,\mathrm{d}t \nonumber\\&- \frac{1}{p^{k s}} \int_0^{\alpha} t^{s-3} m^k \left|A_t\right| \,\mathrm{d}t\\
&= \int_{\alpha}^{\delta} t^{s-3} \tilde{A}(t) \,\mathrm{d}t + \sum_{k=1}^{\infty} \frac{m^k}{p^{k s}} \int_{\alpha}^{p\alpha} t^{s-3} \tilde{A}(t) \,\mathrm{d}t.
\end{align}
Since $\tilde{A}(t)$ is monotone, continuous, and bounded due to it being a domain restriction of $|A_t|$, and $\alpha > 0$, we have by Theorem \ref{thm:entireextension} that $\displaystyle \int_{\alpha}^{\delta} t^{s-3} \tilde{A}(t) \,\mathrm{d}t$ and $\displaystyle \int_{\alpha}^{p\alpha} t^{s-3} \tilde{A}(t) \,\mathrm{d}t$ extend to entire functions. This means that the only source of possible poles is $\displaystyle\sum_{k=1}^{\infty} \frac{m^k}{p^{k s}}$. The aforementioned series can be written as a classic geometric series as follows:
\begin{equation}
\sum_{k=1}^{\infty} \left(\frac{m}{p^{s}}\right)^k = \frac{m}{p^s-m}.
\end{equation}
Notably, the sum on the left-hand side is defined only for $\left\{s \in \mathbb{C} : m < p^{\re(s)}\right\}$. However, the right-hand side is defined everywhere except at $\left\{\log_p(m) + \frac{2\pi i j}{\ln(p)} : j \in \mathbb{Z}\right\}$. Thus, by the principle of meromorphic extension, we can analytically extend $\tilde{\zeta}_A$ to $\mathbb{C} \backslash \left\{\log_p(m) + \frac{2\pi i j}{\ln(p)} : j \in \mathbb{Z}\right\}$, or meromorphically extend to $\mathbb{C}$. Thus, the only possible poles of $\tilde{\zeta}_A(s)$ will be simple poles at $s = \log_p(m) + \frac{2\pi i j}{\ln(p)}$ for every $j \in \mathbb{Z}$. Therefore,
\begin{equation}
\mathscr{P}(A) \subseteq \left\{\log_p(m) + \frac{2\pi i j}{\ln(p)}: j \in \mathbb{Z}\right\}.
\end{equation}
\end{proof}
\newparagraph

Note that there may be $s \in \mathbb{C}$ such that $\displaystyle \int_{\alpha}^{p\alpha} t^{s-3} \tilde{A}(t) \,\mathrm{d}t = 0$, which could potentially cancel some of the poles listed in $\mathscr{P}(A)$. We can see that the pole at $s = \log_p(m)$ will never be canceled out since the integrand $t^{\log_p(m)-3} \tilde{A}(t)$ is strictly positive (and thus $\displaystyle \int_{\alpha}^{p\alpha} t^{\log_p(m)-3} \tilde{A}(t) \,\mathrm{d}t > 0$). However, we currently do not have methods to determine which of the additional non-real poles listed in $\mathscr{P}(A)$ are canceled in a general case, so we will leave this as an open problem.
\newparagraph

\subsection{Worked Example: Cantor Dust} \label{chap:cantordustexample}
Consider the Ternary Cantor Set $\mathfrak{C} \subseteq [0,1]$, and let $A$ be a Sierpi\'nski Carpet modification which produces $A = \mathfrak{C} \times \mathfrak{C} \subseteq \mathbb{R}^2$, shown in Figure \ref{fig:cantordust}.
\begin{figure}[H]
\includegraphics[scale=0.3]{Images/NEquals0}
\includegraphics[scale=0.3]{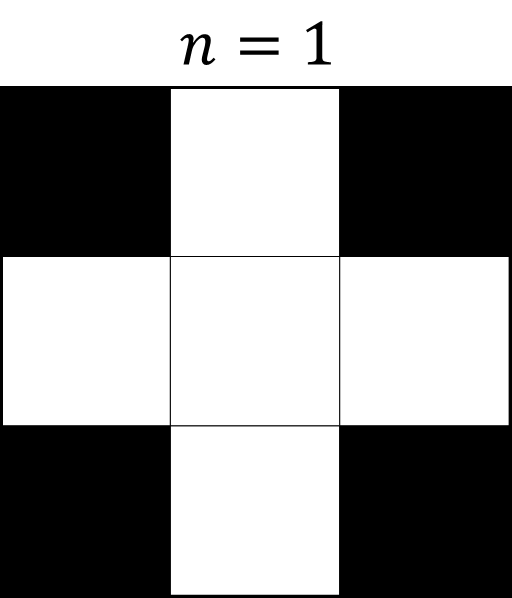}
\includegraphics[scale=0.3]{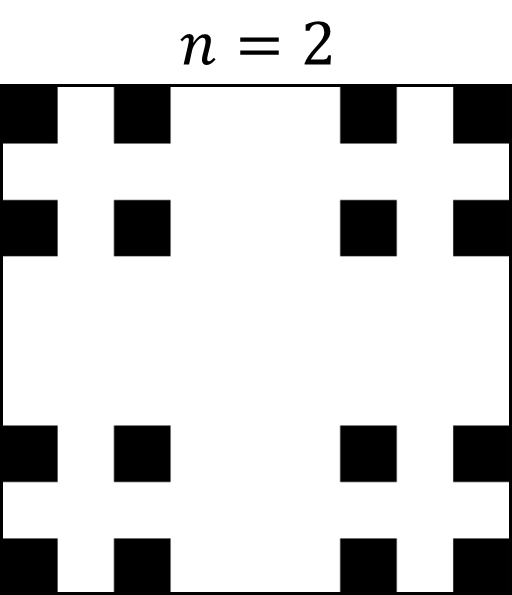}\\
\includegraphics[scale=0.3]{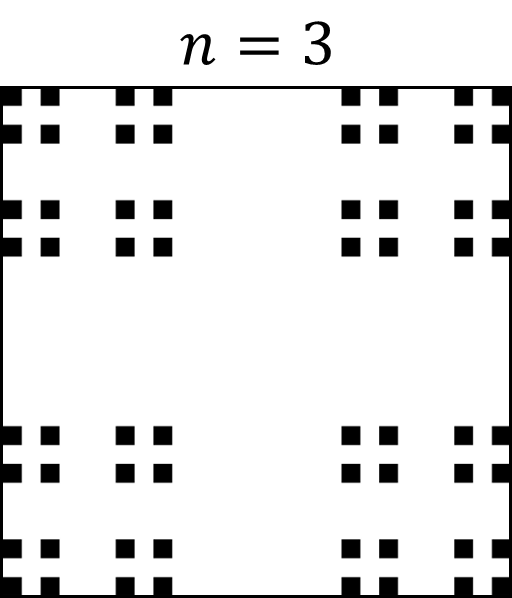}
\includegraphics[scale=0.3]{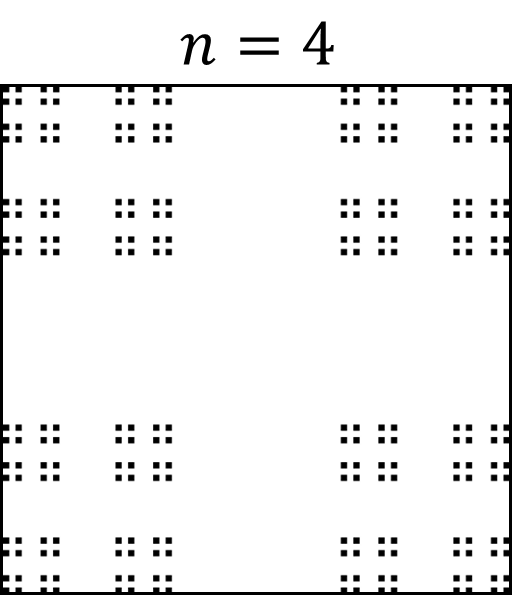}
\includegraphics[scale=0.3]{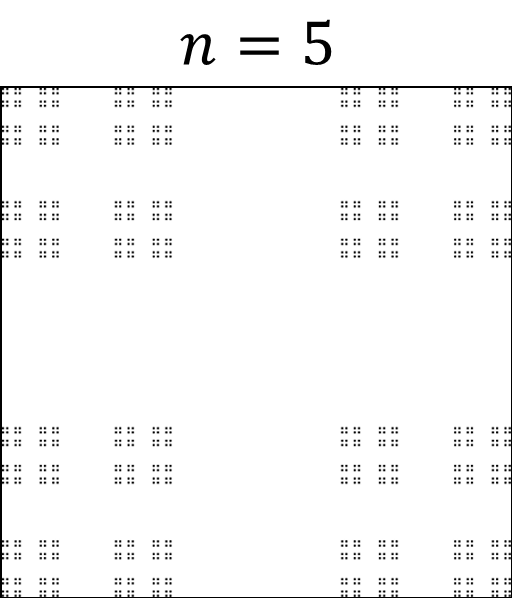}\\
\caption{The first 5 steps of the construction of a Sierpi\'nski Carpet modification which produces the classic ``Cantor Dust".}
\label{fig:cantordust}
\end{figure}

For this carpet, $m = 4$ and $p = 3$. This example is discussed in \cite[Example 4.7.15]{lapidusbook} and \cite[Conjecture 3.16]{lapidusslopaper}, where the complex dimensions are computed to be
\begin{equation}
\mathscr{P}(A) \subseteq \left\{\log_3(2) + \frac{2\pi i j}{\ln(3)}, \log_3(4) + \frac{2\pi i j}{\ln(3)}: j \in \mathbb{Z}\right\}.
\end{equation}
However, note how $\alpha = \frac{1}{6}$ is sufficient to make $A_{\alpha}$ equal 4 disjoint copies of $A_{3\alpha}$, and thus Theorem \ref{thm:alphaconnected} computes its complex dimensions to be
\begin{equation}
\mathscr{P}(A) \subseteq \left\{\log_3(4) + \frac{2\pi i j}{\ln(3)}: j \in \mathbb{Z}\right\}.
\end{equation}
Thus, we obtain a smaller set of possible poles than previously known bounds, but neither contradicts the other. \\

For completeness, we will work through the computation of the tubular neighborhood area. We already know that $\alpha = \frac{1}{6}$ and $p = 3$, and so we can assume that $\delta \in \left(\frac{1}{6}, \frac{1}{2}\right]$ without loss of generality (Corollary \ref{thm:anydelta} justifies doing this). In order to compute the area of the tubular neighborhood, we will use the ``cusp'' areas introduced in Equation (\ref{eq:cusparea}), stated again below for convenience:
\begin{equation}
C(r,d) = dr - \frac{1}{4} d \sqrt{4r^2 - d^2} - r^2 \arctan\left(\frac{d}{\sqrt{4r^2 - d^2}}\right).
\end{equation}
We also need ``petal'' areas, shown in Figure \ref{fig:petalarea}.
\begin{figure}[H]
\includegraphics[scale=0.5]{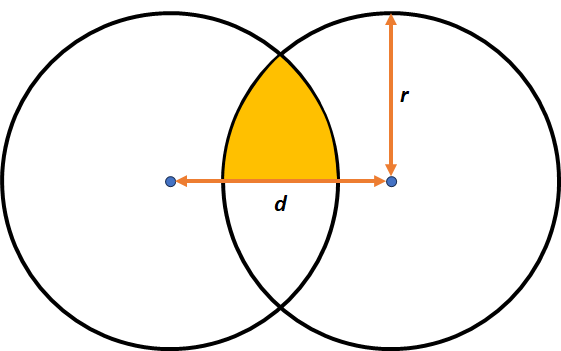}
\caption{Area of ``petal'' between two circles, shaded in yellow.}
\label{fig:petalarea}
\end{figure}
Using standard calculus techniques, we see that when $d \in (r,2r)$, we have the following area:
\begin{align}
P(r,d) &= 2\int_{-\sqrt{r^2-\frac{d^2}{4}}}^0 \left(2\sqrt{r^2-y^2} - d\right) \,\mathrm{d}y \nonumber\\
&= 3d\sqrt{r^2-\frac{d^2}{4}} + r^2 \arctan\left(\frac{\sqrt{4r^2-d^2}}{d}\right).
\label{eq:petalarea}
\end{align}
Using the ``petals'', we can immediately compute the areas of some features of the tubular neighborhood, shown in Figure \ref{fig:flowerareas}.
\begin{figure}[H]
\includegraphics[scale=0.8]{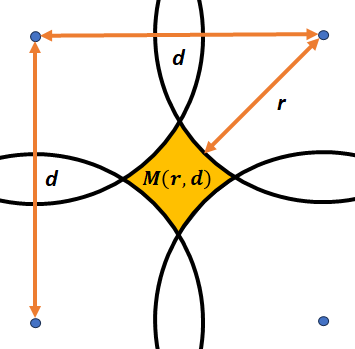}
\includegraphics[scale=0.8]{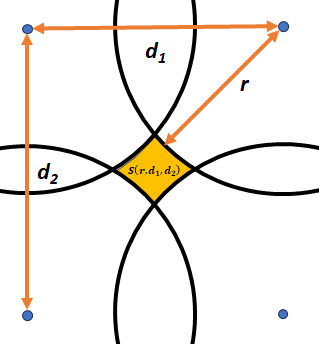}
\caption{Two features which will be subtracted from the tubular neighborhood depending on the value of $t$.}
\label{fig:flowerareas}
\end{figure}
Using the Inclusion-Exclusion Principle, we can compute $M(r,d)$ for $r \in \left(\frac{d}{2}, \frac{d\sqrt{2}}{2}\right)$:
\begin{equation}
M(r,d) = d^2 - \pi r^2 + 4 \cdot P(r,d).
\end{equation}
We can also compute $S(r,d_1,d_2)$ for $d_2 > d_1$ and $r \in \left(\frac{d_2}{2}, \frac{\sqrt{d_1^2+d_2^2}}{2}\right)$:
\begin{equation}
S(r,d_1,d_2) = d_1 \cdot d_2 - \pi r^2 + 2 \cdot P(r,d_1) + 2 \cdot P(r,d_2).
\end{equation}

Using the areas shown in Figures \ref{fig:cusparea}, \ref{fig:petalarea}, and \ref{fig:flowerareas}, we can now compute the area of the tubular neighborhood for $t \in \left(\frac{1}{6}, \frac{1}{2}\right]$:
\begin{align}
|A_t| &= 1 + 4t + \pi t^2 - 4 \left(\sum_{k=1}^{\infty} 2^{k-1} C\left(t,\frac{1}{3^k}\right)\right)\nonumber\\&- M\left(t,\frac{1}{3}\right) \chi_{\left(\frac{1}{6}, \frac{\sqrt{2}}{6}\right]}(t) - 4 \left(\sum_{k=1}^{\infty} 2^{k-1} S\left(t,\frac{1}{3^{k+1}},\frac{1}{3}\right) \chi_{\left(\frac{1}{6}, \frac{1}{2}\sqrt{\frac{1}{9}+\frac{1}{3^{2(k+1)}}}\right]}(t)\right),
\end{align}
where we denote $\chi_E(t)$ to be the indicator function for any $E \subseteq \mathbb{R}$ and $t \in \mathbb{R}$:
\begin{equation}
\chi_E(t) = \begin{cases}
1, & t \in E \\
0, & t \notin E
\end{cases}.
\end{equation}
The resulting integrals $\displaystyle\int_{\frac{1}{6}}^{\frac{1}{2}} t^{s-3} |A_t| \,\mathrm{d}t$ and $\displaystyle\int_{\frac{1}{6}}^{\delta} t^{s-3} |A_t| \,\mathrm{d}t$ are non-elementary. However, Theorem \ref{thm:entireextension} confirms that the aforementioned integrals produce no poles. Thus, we have that the only possible poles of $\tilde{\zeta}_A$ are
\begin{equation}
\mathscr{P}(A) \subseteq \left\{\log_3(4) + \frac{2\pi i j}{\ln(3)}: j \in \mathbb{Z}\right\},
\end{equation}
as predicted by Theorem \ref{thm:alphaconnected}.
%Point out that this does not contradict the conjecture from Example 4.7.15 or the SLO Paper conjecture, just adds more precise information (as opposed to the Cantor Grill)

\section{Computing Dimensions of All the Dust Types} \label{chap:computingdust}
We will now discuss the more geometrically complicated dust type examples which are not completely dusty. From here onwards, $A$ is a dust type Sierpi\'nski Carpet modification constructed by keeping $m$ squares in a $p \times p$ grid for $p \in \mathbb{N} \cap [3,\infty)$ and $m \in \mathbb{N} \cap [2, p^2-1]$, and $\alpha > 0$ is the maximal number such that $A_{\alpha}$ is not path-connected, as given by Theorem \ref{thm:thresholdexists}. As described in the proof of Theorem \ref{thm:infinitethresholdsexist}, we have that $A_{\alpha}$ consists of $m$ copies of $A_{p\alpha}$ scaled down by a factor of $\frac{1}{p}$, though these copies may not be disjoint from each other.

\subsection{Reduction to Combinatorics} \label{chap:intersectiontypes}
We can classify 9 different ``intersection types'' based on how squares can geometrically share edges or corners in Euclidean $\mathbb{R}^2$ space. These intersection types are visually shown in Figure \ref{fig:intersectiontypes}.
\begin{figure}[H]
\includegraphics[scale=0.5]{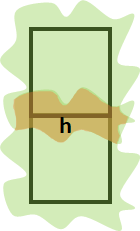}
\includegraphics[scale=0.5]{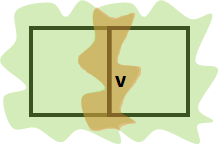}
\includegraphics[scale=0.5]{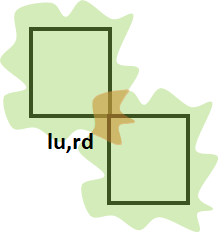}\\
\includegraphics[scale=0.5]{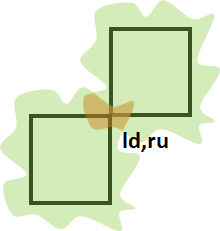}
\includegraphics[scale=0.5]{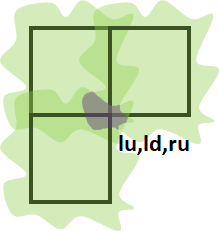}
\includegraphics[scale=0.5]{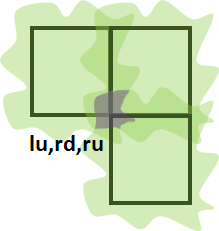}\\
\includegraphics[scale=0.5]{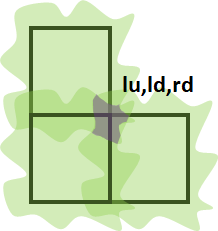}
\includegraphics[scale=0.5]{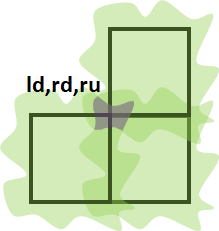}
\includegraphics[scale=0.5]{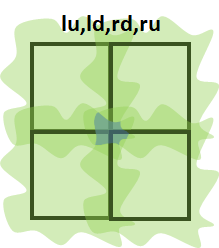}
\caption{The 9 different intersection types. The green blobs are generic representations of $A_{\alpha}$.}
\label{fig:intersectiontypes}
\end{figure}
Reading Figure \ref{fig:intersectiontypes} left-to-right and top-to-bottom, we will denote the areas of the shaded regions by $\mathfrak{I}_t^{h}, \mathfrak{I}_t^{v}, \mathfrak{I}_t^{lu,rd},\dots,\mathfrak{I}_t^{lu,ld,rd,ru}$. We can also define the intersection types algebraically as follows:
\begin{align}
\mathfrak{I}_t^{h} &= \left(\frac{1}{p} A_{pt}\right) \cap \left(\frac{1}{p} A_{pt} + \left(0,\frac{1}{p}\right)\right) \label{eq:intersectionh}, \\
\mathfrak{I}_t^{v} &= \left(\frac{1}{p} A_{pt}\right) \cap \left(\frac{1}{p} A_{pt} + \left(\frac{1}{p},0\right)\right), \\
\mathfrak{I}_t^{lu,rd} &= \left(\frac{1}{p} A_{pt} + \left(0,\frac{1}{p}\right)\right) \cap \left(\frac{1}{p} A_{pt} + \left(\frac{1}{p},0\right)\right), \\
%\mathfrak{I}_t^{ld,ru} &= \left(\frac{1}{p} A_{pt}\right) \cap \left(\frac{1}{p} A_{pt} + \left(\frac{1}{p},\frac{1}{p}\right)\right), \\
%\mathfrak{I}_t^{lu,ld,ru} &= \left(\frac{1}{p} A_{pt}\right) \cap \left(\frac{1}{p} A_{pt} + \left(0,\frac{1}{p}\right)\right) \cap \left(\frac{1}{p} A_{pt} + \left(\frac{1}{p},\frac{1}{p}\right)\right), \\
%\mathfrak{I}_t^{lu,rd,ru} &= \left(\frac{1}{p} A_{pt} + \left(0,\frac{1}{p}\right)\right) \cap \left(\frac{1}{p} A_{pt} + \left(\frac{1}{p},0\right)\right) \cap \left(\frac{1}{p} A_{pt} + \left(\frac{1}{p},\frac{1}{p}\right)\right), \\
%\mathfrak{I}_t^{lu,ld,rd} &= \left(\frac{1}{p} A_{pt}\right) \cap \left(\frac{1}{p} A_{pt} + \left(\frac{1}{p},0\right)\right) \cap \left(\frac{1}{p} A_{pt} + \left(0,\frac{1}{p}\right)\right), \\
%\mathfrak{I}_t^{ld,rd,ru} &= \left(\frac{1}{p} A_{pt}\right) \cap \left(\frac{1}{p} A_{pt} + \left(\frac{1}{p},0\right)\right) \cap \left(\frac{1}{p} A_{pt} + \left(\frac{1}{p},\frac{1}{p}\right)\right), \\
\vdots\nonumber\\
\mathfrak{I}_t^{lu,ld,rd,ru} &= \left(\frac{1}{p} A_{pt}\right) \cap \left(\frac{1}{p} A_{pt} + \left(0,\frac{1}{p}\right)\right) \cap \left(\frac{1}{p} A_{pt} + \left(\frac{1}{p},0\right)\right) \cap \left(\frac{1}{p} A_{pt} + \left(\frac{1}{p},\frac{1}{p}\right)\right). \label{eq:intersection4way}
\end{align}
Since Lebesgue measure and Euclidean metric are translation-invariant, each $\mathfrak{I}_t^{type}$ is measurable. Furthermore, since $|A_t|$ is continuous, bounded, and monotonically increasing as a function of $t$, the algebraic definitions demonstrate that the same properties apply to each $\left|\mathfrak{I}_t^{type}\right|$ since the sets being measured are finite intersections and translations of the original $A_t$. Using this fact, we can use the Inclusion-Exclusion Principle to compute the area of the tubular neighborhood of $A$.
\newparagraph

\begin{theorem}[General Dust Theorem in $\mathbb{R}^2$ (Preliminary Version)]\label{thm:gammaconnected1}
Let $A$ be a dust type Sierpi\'nski Carpet modification constructed by keeping $m \geq 2$ squares from the $p \times p$ grid. Let $\alpha > 0$ be the maximal number such that $A_{\alpha}$ is not path-connected, as given by Theorem \ref{thm:thresholdexists}. Then $\tilde{\zeta}_A$ has a meromorphic continuation to all of $\mathbb{C}$ and
\begin{align}
\mathscr{P}(A) &\subseteq \left\{\log_p(m) + \frac{2\pi i j}{\ln(p)} : j \in \mathbb{Z}\right\}\nonumber\\&\cup \bigcup_{(type) \text{ s.t. Condition } C \text{ holds}} \left\{s \in \mathbb{C} : \substack{{\text{$s$ is a pole of the meromorphic continuation of}}\\{\displaystyle\sum_{k=1}^{\infty} \frac{I_{type}(k)}{p^{(k-1)s}}}}\right\},
\end{align}
where $C$ is the condition that $\left|\left\{t \in \left[\frac{\alpha}{p},\alpha\right] : \left|\mathfrak{I}_t^{type}\right| \neq 0\right\}\right| > 0$, and $I_{type}(k)$ counts the geometric intersections of squares corresponding to that type present at fractal construction level $k$.
\end{theorem}

Examples of Condition $C$ holding or not holding are shown in Figures \ref{fig:dustyl} and \ref{fig:spaceinvaders}.

\begin{figure}[H]
\includegraphics[scale=0.4]{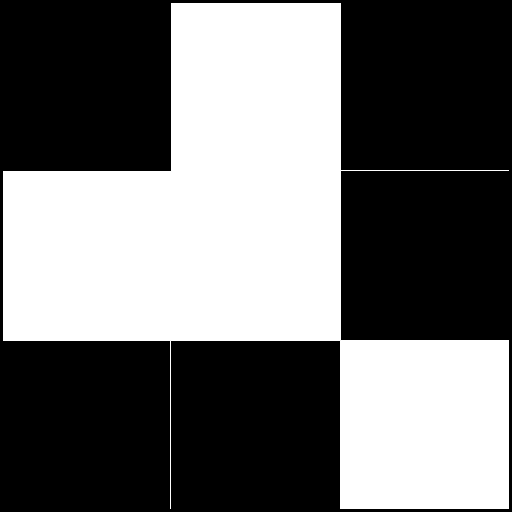}
\includegraphics[scale=0.3]{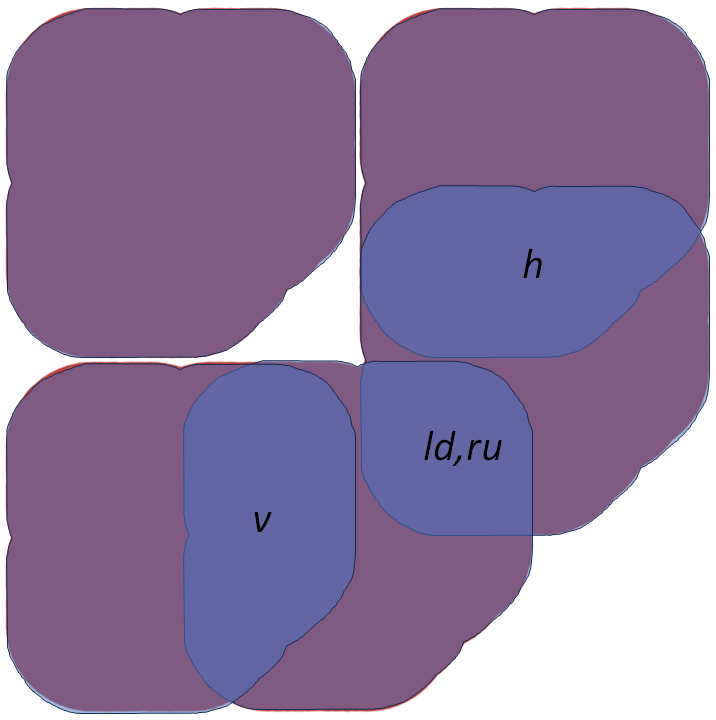}
\caption{On this dust type Sierpi\'nski Carpet in the $3 \times 3$ grid, there are $\{ld,ru\}$ intersection types in the finite construction, and $|\mathfrak{I}_t^{ld,ru}| > 0$ for all $t \in \left[\frac{\alpha}{p},\alpha\right]$ (a positive-measure subset of $\left[\frac{\alpha}{p},\alpha\right]$). In this case, $\{ld,ru\}$ intersection types will pass Condition $C$ in Theorem \ref{thm:gammaconnected1}.}
\label{fig:dustyl}
\end{figure}

\begin{figure}[H]
\includegraphics[scale=0.4]{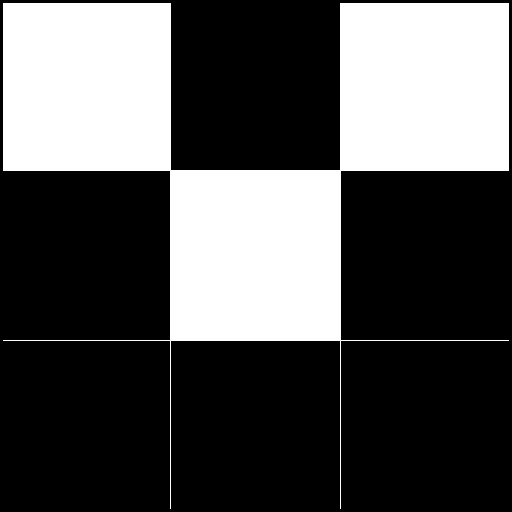}
\includegraphics[scale=0.35]{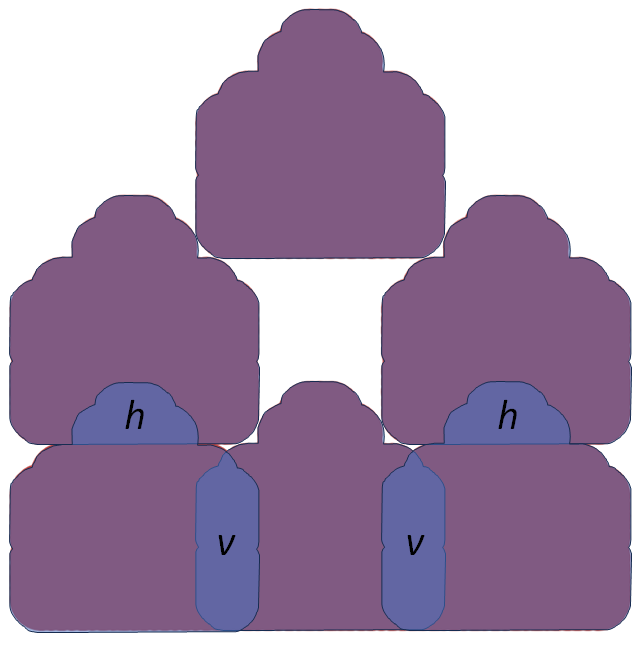}
\caption{On this dust type Sierpi\'nski Carpet in the $3 \times 3$ grid, there are $\{ld,ru\}$ intersection types in the finite construction, but $|\mathfrak{I}_t^{ld,ru}| > 0$ only for $t = \alpha$ (a measure-0 subset of $\left[\frac{\alpha}{p},\alpha\right]$). In this case, $\{ld,ru\}$ intersection types do not pass Condition $C$ in Theorem \ref{thm:gammaconnected1}.}
\label{fig:spaceinvaders}
\end{figure}

\begin{proof}
By construction of a Sierpi\'nksi Carpet modification, we have that $A_{k+1} \subseteq A_k$ for each $k \in \mathbb{N}$, and thus $(A_{k+1})_t \subseteq (A_k)_t$ for all $t \in [0,\delta]$. If we then consider $(A_k)_t$ to be composed of scaled-down copies of itself that intersect each other according to the previously-defined intersection types, then for each intersection type we also have that $(\mathfrak{I}_{k+1})_t^{type} \subseteq (\mathfrak{I}_k)_t^{type}$ for each $t \in \left(\frac{\alpha}{p^n},\frac{\alpha}{p^{n-1}}\right]$ and $k \geq n$, where $A_n$ is the first level of the fractal construction such that $(A_n)_t$ has that intersection type. Thus, since each intersection type is translation-invariant, we have a one-to-one correspondence between the geometric intersections present in $A_n$ and the number of intersection areas $\mathfrak{I}_t^{type}$ present in $A_t$ for $t \in \left(\frac{\alpha}{p^n},\frac{\alpha}{p^{n-1}}\right]$. Hence, the combinatorics relies exclusively on counting how the squares share edges and corners at each finite construction level. Let $I_{type}(n)$ count the number of these geometric intersections of squares at fractal construction level $n$. \\

Let $\alpha > 0$ be the maximal number such that $A_{\alpha}$ is not path-connected, as given by Theorem \ref{thm:thresholdexists}. Since $\mathfrak{I}_t^{type}$ is translation-invariant, using the Inclusion-Exclusion Principle and Theorem \ref{thm:infinitethresholdsexist} yields the following for each $t \in \left[\frac{\alpha}{p^k}, \frac{\alpha}{p^{k-1}}\right]$ where $k \in \mathbb{N}$:
\begin{align}
|A_t| &=
m^k \left|\left(\frac{1}{p^k} A\right)_{p^k t}\right| \nonumber\\
&- I_{h}(k) \left|\frac{1}{p^{k-1}} \mathfrak{I}_{p^{k-1} t}^{h}\right| - I_{v}(k) \left|\frac{1}{p^{k-1}} \mathfrak{I}_{p^{k-1} t}^{v}\right| \nonumber\\& - I_{ld,ru}(k) \left|\frac{1}{p^{k-1}} \mathfrak{I}_{p^{k-1} t}^{ld,ru}\right| - I_{lu,rd}(k) \left|\frac{1}{p^{k-1}} \mathfrak{I}_{p^{k-1} t}^{lu,rd}\right| \nonumber\\
&+ I_{ld,rd,ru}(k) \left|\frac{1}{p^{k-1}} \mathfrak{I}_{p^{k-1} t}^{ld,rd,ru}\right| + I_{lu,rd,ru}(k) \left|\frac{1}{p^{k-1}} \mathfrak{I}_{p^{k-1} t}^{lu,rd,ru}\right|\nonumber\\& + I_{lu,ld,ru}(k) \left|\frac{1}{p^{k-1}} \mathfrak{I}_{p^{k-1} t}^{lu,ld,ru}\right| + I_{lu,ld,rd}(k) \left|\frac{1}{p^{k-1}} \mathfrak{I}_{p^{k-1} t}^{lu,ld,rd}\right| \nonumber\\
&- I_{lu,ld,rd,ru}(k) \left|\frac{1}{p^{k-1}} \mathfrak{I}_{p^{k-1} t}^{lu,ld,rd,ru}\right|.
\end{align}
Each $\left|\mathfrak{I}_t^{type}\right|$ will be monotonically increasing, positive, and continuous just like $|A_t|$, as shown in Equations (\ref{eq:intersectionh})-(\ref{eq:intersection4way}). Now define the functions $\tilde{A}(t) : [\alpha,p\alpha] \rightarrow \mathbb{R}$ and $\tilde{\mathfrak{I}}^{type}(t) : \left[\frac{\alpha}{p}, \alpha\right] \rightarrow \mathbb{R}$, where $\tilde{A}(t) = |A_t|$ whenever $t \in [\alpha,p\alpha]$, and $\tilde{\mathfrak{I}}^{type}(t) = |\mathfrak{I}_t^{type}|$ whenever $t \in \left[\frac{\alpha}{p}, \alpha\right]$ for each of the 9 different intersection types. By the domain-restriction, we have that Condition $C$ in the statement of Theorem \ref{thm:gammaconnected1}, which is that $\left|\left\{t \in \left[\frac{\alpha}{p},\alpha\right] : \left|\mathfrak{I}_t^{type}\right| \neq 0\right\}\right| > 0$, holds if and only if $\left|\left\{t \in \left[\frac{\alpha}{p},\alpha\right] : \tilde{\mathfrak{I}}^{type}(t) \neq 0\right\}\right| > 0$. \\

We can assume without loss of generality that $\alpha < \delta \leq p\alpha$. Theorem \ref{thm:scaling} then gives us the following tubular zeta function formula:
\begin{align}
\tilde{\zeta}_A(s)
&= \int_{\alpha}^{\delta} t^{s-3} |A_t| \,\mathrm{d}t + \sum_{k=1}^{\infty} \int_{\frac{\alpha}{p^k}}^{\frac{\alpha}{p^{k-1}}} t^{s-3} |A_t| \,\mathrm{d}t \nonumber\\
&= \int_{\alpha}^{\delta} t^{s-3} \tilde{A}(t) \,\mathrm{d}t + \sum_{k=1}^{\infty} \int_{\frac{\alpha}{p^k}}^{\frac{\alpha}{p^{k-1}}} t^{s-3} \left(m^k \left|\left(\frac{1}{p^k} A\right)_{p^k t}\right| - I_{h}(k) \left|\frac{1}{p^{k-1}} \mathfrak{I}_{p^{k-1} t}^{h}\right|\right.\nonumber\\&\left. - I_{v}(k) \left|\frac{1}{p^{k-1}} \mathfrak{I}_{p^{k-1} t}^{v}\right| - I_{ld,ru}(k) \left|\frac{1}{p^{k-1}} \mathfrak{I}_{p^{k-1} t}^{ld,ru}\right| - I_{lu,rd}(k) \left|\frac{1}{p^{k-1}} \mathfrak{I}_{p^{k-1} t}^{lu,rd}\right|\right.\nonumber\\&\left. + I_{ld,rd,ru}(k) \left|\frac{1}{p^{k-1}} \mathfrak{I}_{p^{k-1} t}^{ld,rd,ru}\right| + I_{lu,rd,ru}(k) \left|\frac{1}{p^{k-1}} \mathfrak{I}_{p^{k-1} t}^{lu,rd,ru}\right|\right.\nonumber\\&\left. + I_{lu,ld,ru}(k) \left|\frac{1}{p^{k-1}} \mathfrak{I}_{p^{k-1} t}^{lu,ld,ru}\right| + I_{lu,ld,rd}(k) \left|\frac{1}{p^{k-1}} \mathfrak{I}_{p^{k-1} t}^{lu,ld,rd}\right|\right.\nonumber\\&\left. - I_{lu,ld,rd,ru}(k) \left|\frac{1}{p^{k-1}} \mathfrak{I}_{p^{k-1} t}^{lu,ld,rd,ru}\right|\right) \,\mathrm{d}t\\
&= \int_{\alpha}^{\delta} t^{s-3} \tilde{A}(t) \,\mathrm{d}t + \sum_{k=1}^{\infty} \frac{1}{p^{ks}} \int_{\alpha}^{p\alpha} t^{s-3} \left(m^k |A_t|\right) \,\mathrm{d}t\nonumber\\&
+ \frac{1}{p^{(k-1)s}} \int_{\frac{\alpha}{p}}^{\alpha} t^{s-3} \left( - I_{h}(k) \left|\mathfrak{I}_t^{h}\right| - I_{v}(k) \left|\mathfrak{I}_t^{v}\right| - I_{ld,ru}(k) \left|\mathfrak{I}_t^{ld,ru}\right|\right.\nonumber\\&\left. - I_{lu,rd}(k) \left|\mathfrak{I}_t^{lu,rd}\right| + I_{ld,rd,ru}(k) \left|\mathfrak{I}_t^{ld,rd,ru}\right| + I_{lu,rd,ru}(k) \left|\mathfrak{I}_t^{lu,rd,ru}\right|\right.\nonumber\\&\left. + I_{lu,ld,ru}(k) \left|\mathfrak{I}_t^{lu,ld,ru}\right| + I_{lu,ld,rd}(k) \left|\mathfrak{I}_t^{lu,ld,rd}\right|\right.\nonumber\\&\left. - I_{lu,ld,rd,ru}(k) \left|\mathfrak{I}_t^{lu,ld,rd,ru}\right|\right) \,\mathrm{d}t\\
&= \int_{\alpha}^{\delta} t^{s-3} \tilde{A}(t) \,\mathrm{d}t + \sum_{k=1}^{\infty} \frac{m^k}{p^{ks}} \int_{\alpha}^{p\alpha} t^{s-3} \tilde{A}(t) \,\mathrm{d}t\nonumber\\&
- \frac{I_{h}(k)}{p^{(k-1)s}} \int_{\frac{\alpha}{p}}^{\alpha} t^{s-3} \tilde{\mathfrak{I}}^{h}(t) \,\mathrm{d}t - \frac{I_{v}(k)}{p^{(k-1)s}} \int_{\frac{\alpha}{p}}^{\alpha} t^{s-3} \tilde{\mathfrak{I}}^{v}(t) \,\mathrm{d}t\nonumber\\&
- \frac{I_{ld,ru}(k)}{p^{(k-1)s}} \int_{\frac{\alpha}{p}}^{\alpha} t^{s-3} \tilde{\mathfrak{I}}^{ld,ru}(t) \,\mathrm{d}t - \frac{I_{lu,rd}(k)}{p^{(k-1)s}} \int_{\frac{\alpha}{p}}^{\alpha} t^{s-3} \tilde{\mathfrak{I}}^{lu,rd}(t) \,\mathrm{d}t\nonumber\\&
+ \frac{I_{ld,rd,ru}(k)}{p^{(k-1)s}} \int_{\frac{\alpha}{p}}^{\alpha} t^{s-3} \tilde{\mathfrak{I}}^{ld,rd,ru}(t) \,\mathrm{d}t + \frac{I_{lu,rd,ru}(k)}{p^{(k-1)s}} \int_{\frac{\alpha}{p}}^{\alpha} t^{s-3} \tilde{\mathfrak{I}}^{lu,rd,ru}(t) \,\mathrm{d}t\nonumber\\&
+ \frac{I_{lu,ld,ru}(k)}{p^{(k-1)s}} \int_{\frac{\alpha}{p}}^{\alpha} t^{s-3} \tilde{\mathfrak{I}}^{lu,ld,ru}(t) \,\mathrm{d}t + \frac{I_{lu,ld,rd}(k)}{p^{(k-1)s}} \int_{\frac{\alpha}{p}}^{\alpha} t^{s-3} \tilde{\mathfrak{I}}^{lu,ld,rd}(t) \,\mathrm{d}t\nonumber\\&
- \frac{I_{lu,ld,rd,ru}(k)}{p^{(k-1)s}} \int_{\frac{\alpha}{p}}^{\alpha} t^{s-3} \tilde{\mathfrak{I}}^{lu,ld,rd,ru}(t) \,\mathrm{d}t. \label{eq:dusttheoremending}
\end{align}
Each of the $\tilde{A}(t), \tilde{\mathfrak{I}}^{type}(t)$ functions will be monotone, continuous, and nonnegative (though the intersection areas may be 0 for all or part of the interval $\left[\frac{\alpha}{p}, \alpha\right]$) since they are domain-restrictions of their respective $|A_t|$ and $\left|\mathfrak{I}_t^{type}\right|$ functions. Furthermore, $\alpha > 0$ by Theorem \ref{thm:thresholdexists}, so we have by Theorem \ref{thm:entireextension} that each of the integrals in Equation (\ref{eq:dusttheoremending}) will extend to entire functions. Note that if $\tilde{\mathfrak{I}}^{type}(t) = 0$ for almost every $t \in \left[\frac{\alpha}{p}, \alpha\right]$, then the corresponding $\displaystyle\int_{\frac{\alpha}{p}}^{\alpha} t^{s-3} \tilde{\mathfrak{I}}^{type}(t) \,\mathrm{d}t = 0$, completely canceling any poles which could come from its corresponding $\displaystyle \sum_{k=1}^{\infty} \frac{I_{type}(k)}{p^{(k-1)s}}$. Therefore, the only possible poles of $\tilde{\zeta}_A(s)$ are solutions to $p^s - m = 0$ (which originates from the meromorphic continuation of the geometric series $\displaystyle \sum_{k=1}^{\infty} \frac{m^k}{p^{ks}}$), as well as whichever poles appear when we go to compute $\displaystyle \sum_{k=1}^{\infty} \frac{I_{type}(k)}{p^{(k-1)s}}$ for each intersection type that has the property that $\left|\mathfrak{I}_t^{type}\right| \neq 0$ for some positive-measure set of $t \in \left[\frac{\alpha}{p}, \alpha\right]$.
\end{proof}
\newparagraph

\subsection{The Combinatorics} \label{chap:countingintersections}
With Theorem \ref{thm:gammaconnected1}, we have effectively reduced the task of finding the multiset of possible complex dimensions from one of analysis to one of combinatorics. We will now discuss the geometry and combinatorics involved in counting the intersection types present in $A_n$ for each $n \in \mathbb{N}$. Note that the proceeding combinatorial algorithm does not rely on the assumption that $|\mathfrak{I}_t^{type}| \neq 0$ for a positive-measure collection of $t \in \left[\frac{\alpha}{p},\alpha\right]$, although this assumption is necessary in order for the conclusions to be accurate. We will need to run the proceeding algorithm for each intersection type present, so we will drop the ``$(type)$'' unless we have multiple types in the same equation. \\

Starting with $A_1$, we can immediately count the number of intersections of that particular type by visually hand-counting, producing $I(1)$, as shown in Figure \ref{fig:keyexamplelevel1}.
\begin{figure}[H]
\includegraphics[scale=1]{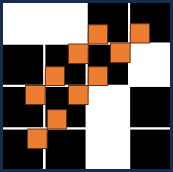}
\caption{The intersections of type $\{ld,ru\}$ present in $A_1$ are marked in orange for this particular Sierpi\'nski Carpet modification.}
\label{fig:keyexamplelevel1}
\end{figure}

As we proceed to $A_2$, we observe that $A_2$ consists of $m$ scaled-down copies of $A_1$, and thus $I(2) \geq m\cdot I(1)$. To complete the count, we will track how these intersections relate to the copies of $A_1$, labeling them $D(1)$ when the $A_1$ copies share a diagonal corner only, $H(1)$ when the $A_1$ copies share a horizontal edge, and $V(1)$ when the $A_1$ copies share a vertical edge. These $D, H$, and $V$ will themselves form sequences dependent on the finite construction level, as shown in Figure \ref{fig:keyexamplelevel2}.
\begin{figure}[H]
\includegraphics[scale=0.6]{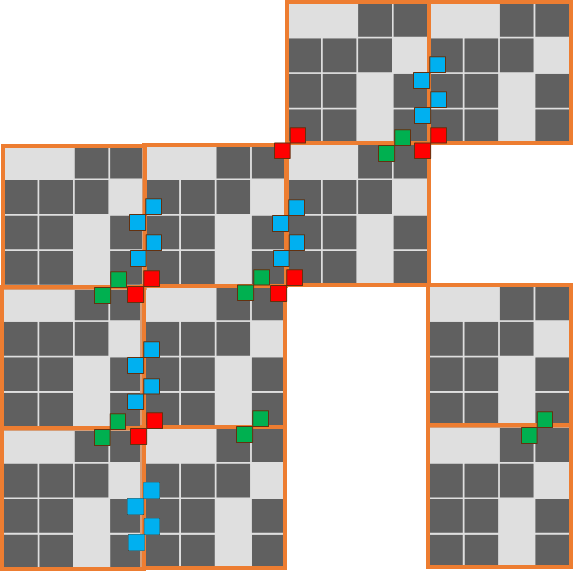}
\caption{The $D$ intersections are marked in red, $H$ in green, and $V$ in blue.}
\label{fig:keyexamplelevel2}
\end{figure}
Geometrically, these are the only locations where intersections can appear, so we conclude:
\begin{equation}
I(2) = m\cdot I(1) + D(1) + H(1) + V(1).
\end{equation}

Continuing to $A_3$, we observe that $A_3$ consists of $m^2$ scaled-down copies of $A_1$, and so $I(3) \geq m^2 \cdot I(1)$. Furthermore, $A_3$ consists of $m$ scaled-down copies of $A_2$, so $D(2) \geq m\cdot D(1)$, $H(2) \geq m\cdot H(1)$, and $V(2) \geq m\cdot V(1)$. Any unaccounted-for intersections will have properties $D, V$, or $H$, and will occur where $A_2$ copies and $A_1$ copies share edges or corners. We can then search where we had $D(1), V(1)$, and $H(1)$ intersections to determine particular replacement rules. For instance, we see from $A_3$ in Figure \ref{fig:keyexamplelevel3} that each $H$ got replaced with an $H$ and a $D$, each $V$ got replaced with 2 $V$'s and a $D$, and the $D$'s stayed as $D$'s.
\begin{figure}[H]
\includegraphics[scale=0.9]{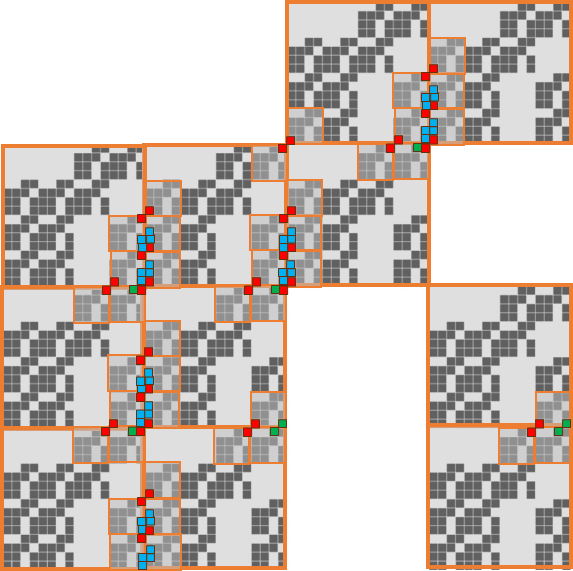}
\caption{The $D$ intersections are marked in red, $H$ in green, and $V$ in blue.}
\label{fig:keyexamplelevel3}
\end{figure}
From this, we can assemble the following facts, determining geometric multipliers for the $H$ and $V$ sequences from the replacement rules:
\begin{align}
I(3) &= m^2\cdot I(1) + D(2) + H(2) + V(2) \\
D(2) &= m\cdot D(1) + D(1) + H(1) + V(1) \\
H(2) &= m\cdot H(1) + 1 \cdot H(1) \\
V(2) &= m\cdot V(1) + 2 \cdot V(1).
\end{align}
As we proceed to $A_4$, we obtain the following via the same geometric replacement rules:
\begin{align}
I(4) &= m^3\cdot I(1) + D(3) + H(3) + V(3) \\
D(3) &= m^2\cdot D(1) + D(2) + H(2) + V(2) \\
H(3) &= m^2\cdot H(1) + 1 \cdot H(2) \\
V(3) &= m^2\cdot V(1) + 2 \cdot V(2).
\end{align}
In general for any $k \geq 3$, we have:
\begin{align}
I(k) &= m^{k-1}\cdot I(1) + D(k-1) + H(k-1) + V(k-1) \\
D(k) &= m^{k-1}\cdot D(1) + D(k-1) + d_H \cdot H(k-1) + d_V \cdot V(k-1) \\
H(k) &= m^{k-1}\cdot H(1) + h \cdot H(k-1) \\
V(k) &= m^{k-1}\cdot V(1) + v \cdot V(k-1),
\end{align}
where $d_H, d_V, h$, and $v$ are constants whose values depend on the fractal. In the example illustrated in Figures \ref{fig:keyexamplelevel1}, \ref{fig:keyexamplelevel2}, and \ref{fig:keyexamplelevel3}, we have $d_H = 1, d_V = 1, h = 1, v = 2$. The constants $h, v, d_H$, and $d_V$ can be computed via the geometric replacement rules, or by manually counting $D(1),D(2),H(1),H(2),V(1)$, and $V(2)$ and then computing the following:
\begin{align}
h &= \begin{cases}
\frac{H(2) - m\cdot H(1)}{H(1)}, & H(1) \neq 0 \\
0, & H(1) = 0
\end{cases} \\
v &= \begin{cases}
\frac{V(2) - m\cdot V(1)}{V(1)}, & V(1) \neq 0 \\
0, & V(1) = 0
\end{cases} \\
d_H \cdot H(1) + d_V \cdot V(1) &= D(2) - (m+1) \cdot D(1).
\end{align}
We will define $d_H = 0$ when $H(1) = 0$, and likewise $d_V = 0$ when $V(1) = 0$. From this, we can use algebraic methods to determine an explicit formula for $I(k)$:
\begin{align}
I(k) &= d_H H(1) \frac{h - h^{k-1} - m + m h^{k-1} + m^{k-1} - h m^{k-1}}{(m-1)(h-m)(h-1)}\nonumber\\& %Use (m^k + m(k-2) - m^2(k-1))/(m(m-1)^2) if h=1
+ d_V V(1) \frac{v - v^{k-1} - m + m v^{k-1} + m^{k-1} - v m^{k-1}}{(m-1)(v-m)(h-1)}\nonumber\\& %Use (m^k + m(k-2) - m^2(k-1))/(m(m-1)^2) if v=1
+ H(1) \frac{m^{k-1} - h^{k-1}}{m-h}
+ V(1) \frac{m^{k-1} - v^{k-1}}{m-v}
+ D(1) \frac{m^{k-1} - 1}{m-1}
+ I(1) m^{k-1}. \label{eq:zerotothezero}
\end{align}
In some instances, Equation (\ref{eq:zerotothezero}) will need to be interpreted differently to ensure the counts are accurate. For instance, in the case that $h = 0$, $H(1) \neq 0$, $V(1) = 0$, $D(1) = 0$, and $d_H = 0$, Equation (\ref{eq:zerotothezero}) will appear as follows:
\begin{align}
I(k) &= H(1) \frac{m^{k-1} - 0^{k-1}}{m}
+ I(1) m^{k-1}. \label{eq:hgetszerotothezero}
\end{align}
When we compute $I(1)$ in Equation (\ref{eq:hgetszerotothezero}), we will treat the resulting ``$0^0$" as\\ $\displaystyle \lim_{x\rightarrow 0^+} x^x = 1$ to ensure we count everything correctly. \\

Furthermore, the expression of Equation (\ref{eq:zerotothezero}) appears to have a singularity when $h = 1$ and $H(1) \neq 0$. To resolve this, in a case where $h=1, H(1) \neq 0$, we will use $\displaystyle \lim_{h\rightarrow 1} I(k)$, which produces the following:
\begin{align}
\lim_{h\rightarrow 1} I(k) &= \frac{d_H H(1)}{m-1} \left(\left(\sum_{j=1}^{k-1} m^j\right) - (m-2)\right)\nonumber\\&
+ d_V V(1) \frac{v - v^{k-1} - m + v^{k-1} m + m^{k-1} - v m^{k-1}}{(m-1)(v-m)(v-1)}\nonumber\\& %Use (m^k + m(k-2) - m^2(k-1))/(m(m-1)^2) if v=1
+ H(1) \left(\sum_{j=0}^{k-1} m^j\right)
+ V(1) \frac{m^{k-1} - v^{k-1}}{m-v}
+ D(1) \frac{m^{k-1} - 1}{m-1}
+ I(1) m^{k-1}.
\end{align}
Likewise, in a case where $v=1, V(1) \neq 0$, we will use $\displaystyle \lim_{v\rightarrow 1} I(k)$. \\

Based on the nature of the geometric replacement rules, we know that $h$ and $v$ must be integers of value $\leq p$ since the relevant ``duplicating'' only occurs along a single edge of a square, which will be divided into a discrete $p \times p$ grid at each step of the finite construction. Furthermore, the multipliers $h$ and $v$ cannot be negative because there is no geometric mechanism that would allow for that to occur. Thus, we have that
\begin{equation}
h,v \in \{0,1,2,\dots,p\}.
\end{equation}
We can also see from the geometric replacement rules that
\begin{equation}
d_H, d_V \in \{0,1\},
\end{equation}
since an $H$ or $V$ intersection can each produce at most one $D$ intersection at the corners of the squares where they generate. Moreover, it is evident that if $m \leq p$ and the carpet is of dust type, then it will be completely dusty (and thus there would be no intersection sequences), and thus we can never have $v = m$ or $h = m$. We also ignore cases where $m = 1$ as they do not fit the definition of a dust type Sierpi\'nski Carpet modification. Thus, there are no other singularities in the statement of Equation (\ref{eq:zerotothezero}), and the resulting sequence $I(k)$ will contain only nonnegative integers, as would be expected from a count of geometric properties. \\

Note that having $h = p$ or $v = p$ is possible in fractals such as the one shown in Figure \ref{fig:cantorstripes4x4} - notice how when we move from $A_2$ to $A_3$, the combination of 3 $H$'s gets replaced with 12 $H$'s and 3 $D$'s, which indicates that the multiplier for the $H$ sequence is 4, the same as $p$. It is also possible for $h = 0$ or $v = 0$, as demonstrated in Figures \ref{fig:bluevanishes4x4} and \ref{fig:bluevanishesredstays4x4}.

\begin{figure}
\boxed{\includegraphics[scale=1.47]{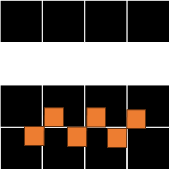}}
\boxed{\includegraphics[scale=0.35]{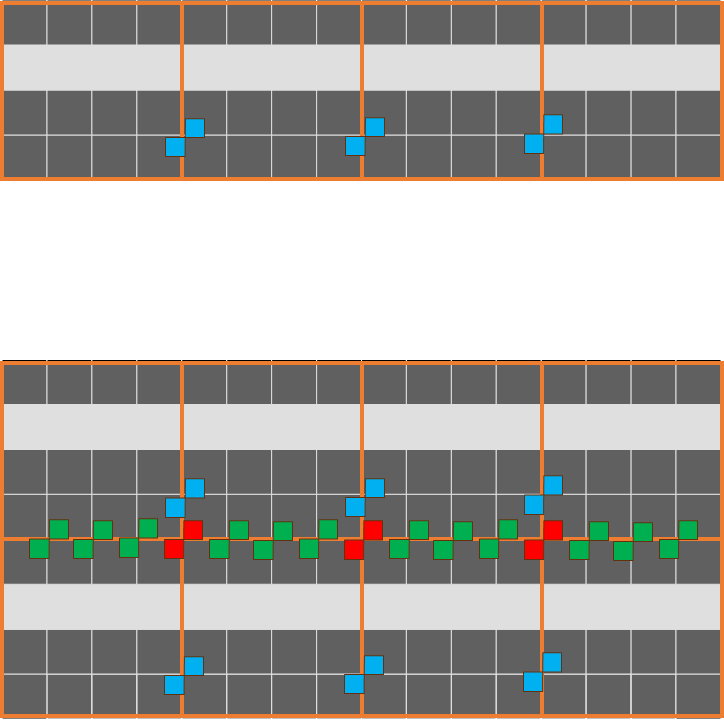}}\\
\boxed{\includegraphics[scale=0.7]{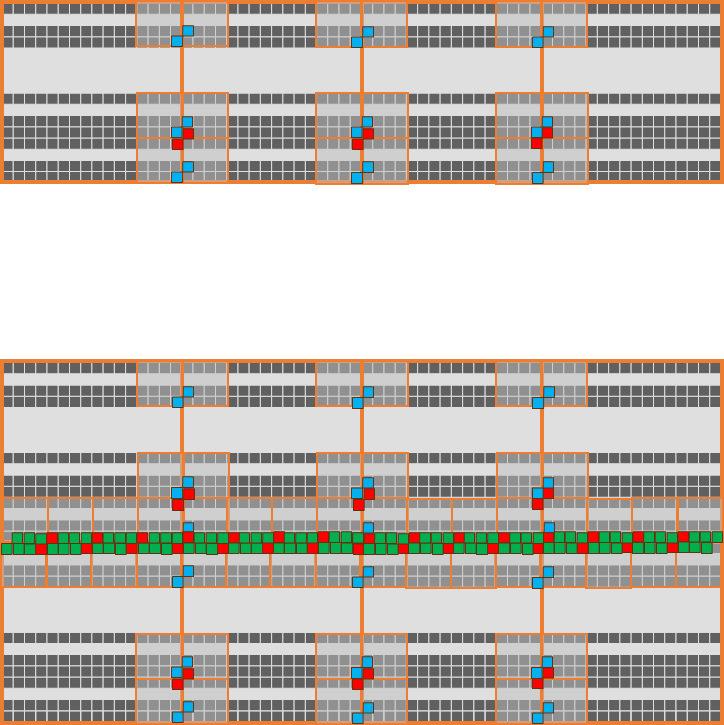}}
\caption{For the $\{ld,ru\}$ intersection type, the $H$ sequence has duplication factor 4 and the $V$ sequence has duplication factor 3, with $d_H = 1$ and $d_V = 1$.}
\label{fig:cantorstripes4x4}
\end{figure}

\begin{figure}
\boxed{\includegraphics[scale=1.47]{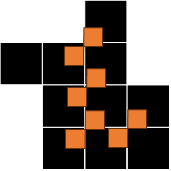}}
\boxed{\includegraphics[scale=0.35]{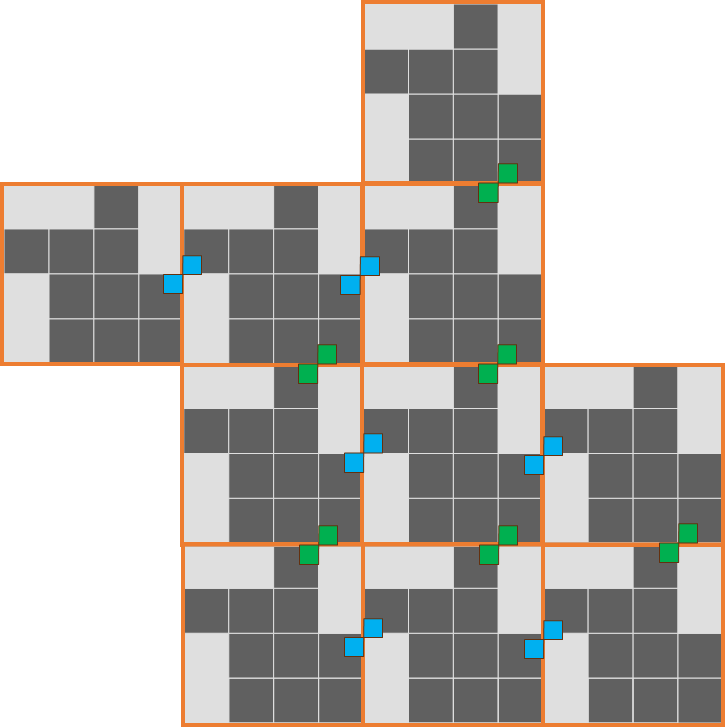}}\\
\boxed{\includegraphics[scale=0.7]{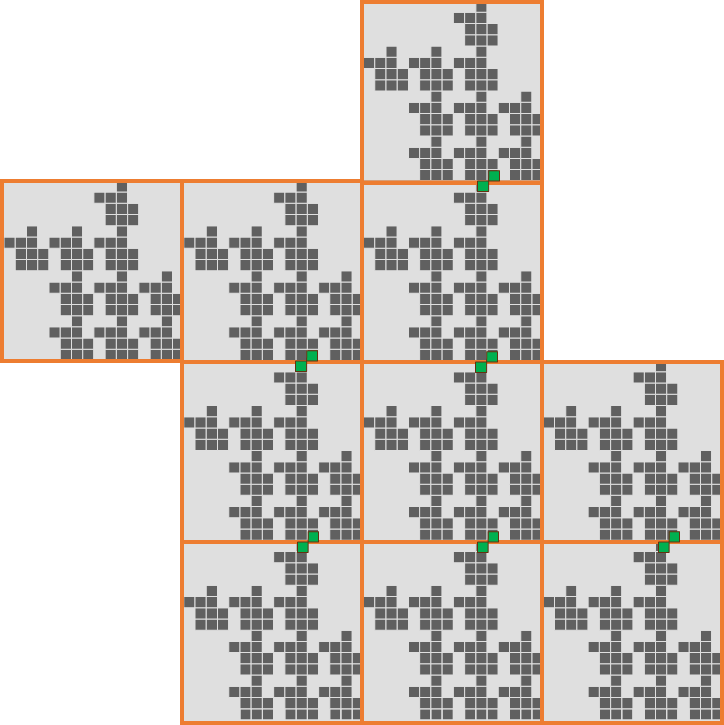}}
\caption{For the $\{ld,ru\}$ intersection type, the $H$ sequence has duplication factor 1 and the $V$ sequence has duplication factor 0, with $d_H = 0$ and $d_V = 0$.}
\label{fig:bluevanishes4x4}
\end{figure}
\begin{figure}
\boxed{\includegraphics[scale=1.47]{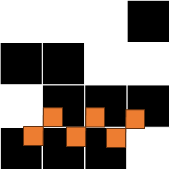}}
\boxed{\includegraphics[scale=0.35]{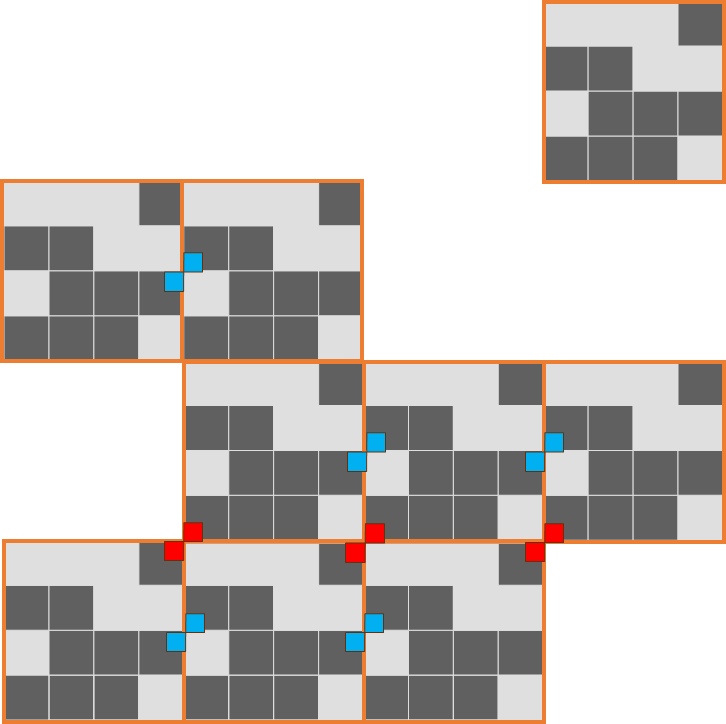}}\\
\boxed{\includegraphics[scale=0.7]{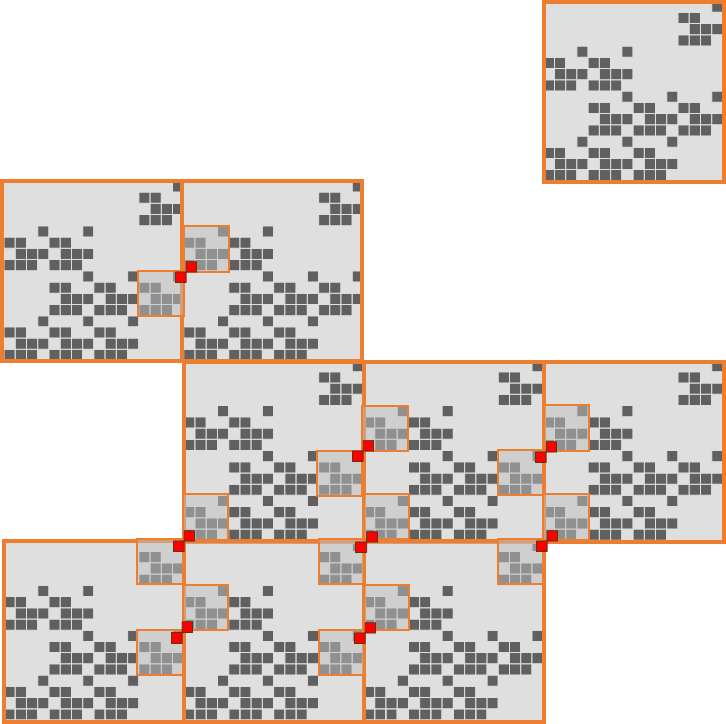}}
\caption{For the $\{ld,ru\}$ intersection type, the $V$ sequence has duplication factor 0 and $d_V = 1$.}
\label{fig:bluevanishesredstays4x4}
\end{figure}

\subsection{General Dust Theorem} \label{chap:theorem42}
Now that we have an algorithm to compute $I_{type}(k)$ for each intersection type, we can complete the statement and proof of Theorem \ref{thm:gammaconnected1}, producing an algorithm to compute the possible complex dimensions for every dust type Sierpi\'nski Carpet modification.

\begin{theorem}[General Dust Theorem in $\mathbb{R}^2$]\label{thm:gammaconnected2}
Let $A$ be a dust type Sierpi\'nski Carpet modification constructed by keeping $m \geq 2$ squares from the $p \times p$ grid. Let $\alpha > 0$ be the maximal number such that $A_{\alpha}$ is not path-connected, as given by Theorem \ref{thm:thresholdexists}. Then $\tilde{\zeta}_A$ has a meromorphic continuation to all of $\mathbb{C}$ and
\begin{align}
\mathscr{P}(A) &\subseteq \left\{\log_p(m) + \frac{2\pi i j}{\ln(p)} : j \in \mathbb{Z}\right\}\nonumber\\&\cup \bigcup_{(type) \text{ s.t. Condition } C \text{ holds}} \left(\bigcup_{r \in R_{type}} \left\{r + \frac{2\pi i j}{\ln(p)} : j \in \mathbb{Z}\right\}\right),
\end{align}
where $C$ is the condition that $\left|\left\{t \in \left[\frac{\alpha}{p},\alpha\right] : \left|\mathfrak{I}_t^{type}\right| \neq 0\right\}\right| > 0$, and where $R_{type}$ is a multiset of real numbers, such that the multiplicity of an element of $R_{type}$ corresponds to the order of the pole. For each intersection type, $R_{type}$ is determined by Table \ref{table:allthepoles}, computed by observing the numbers of intersections in $A_1, A_2$, and $A_3$.

\begin{table}[!h]
\begin{center}
\begin{tabular}{|c||c|c|c|}
\hline
\textbf{Multipliers $h$ and $v$} & \textbf{Case $D(1) \neq 0$} & \textbf{Case $D(1) = 0$} \\
\hline
\hline
$h \neq v$, both $\notin \{0,1\}$ & $\{\log_p(h), \log_p(v), \log_p(1)\}$ & $\{\log_p(h), \log_p(v)\}$ \\
\hline
$h = v \notin \{0,1\}$ & $\{\log_p(h), \log_p(1)\}$ & $\{\log_p(h)\}$ \\
\hline
$h \notin \{0,1\}; v = 1$ & $\{\log_p(h), \log_p(1), \log_p(1)\}$ & $\{\log_p(h), \log_p(1)\}$ \\
\hline
$h = 1; v \notin \{0,1\}$ & $\{\log_p(v), \log_p(1), \log_p(1)\}$ & $\{\log_p(v), \log_p(1)\}$ \\
\hline
$h = v = 1$ & $\{\log_p(1), \log_p(1)\}$ & $\{\log_p(1)\}$ \\
\hline
$h \notin \{0,1\}; v = 0$ & $\{\log_p(h), \log_p(1)\}$ & $\{\log_p(h)\}$ \\
\hline
$h = 0; v \notin \{0,1\}$ & $\{\log_p(v), \log_p(1)\}$ & $\{\log_p(v)\}$ \\
\hline
$h = v = 0$ & $\{\log_p(1)\}$ & $\emptyset$ \\
\hline
\end{tabular}
\caption{All possibilities for $R_{type}$ given a particular intersection type.}
\label{table:allthepoles}
\end{center}
\end{table}
\end{theorem}

\begin{proof}
The proof of Theorem \ref{thm:gammaconnected1} shows how for each intersection type that meets Condition $C$, the meromorphic continuation of $\displaystyle \sum_{k=1}^{\infty} \frac{I_{type}(k)}{p^{(k-1)s}}$ gives us additional poles. The multiset of the real components of these poles (with multiplicity of the element corresponding to the order of the pole) is precisely $R_{type}$ by definition. Furthermore, we computed the following in Equation (\ref{eq:zerotothezero}):
\begin{align}
I(k) &= d_H H(1) \frac{h - h^{k-1} - m + h^{k-1} m + m^{k-1} - h m^{k-1}}{(m-1)(h-m)(h-1)}\nonumber\\& %Use (m^k + m(k-2) - m^2(k-1))/(m(m-1)^2) if h=1
+ d_V V(1) \frac{v - v^{k-1} - m + v^{k-1} m + m^{k-1} - v m^{k-1}}{(m-1)(v-m)(h-1)}\nonumber\\& %Use (m^k + m(k-2) - m^2(k-1))/(m(m-1)^2) if v=1
+ H(1) \frac{m^{k-1} - h^{k-1}}{m-h}
+ V(1) \frac{m^{k-1} - v^{k-1}}{m-v}
+ D(1) \frac{m^{k-1} - 1}{m-1}
+ I(1) m^{k-1}.
\end{align}
Computing the relevant infinite sum using the formula for geometric series, we have the following:
\begin{align}
\sum_{k=1}^{\infty} \frac{I(k)}{p^{ks}} &= \frac{d_H H(1)}{(p^s-1)(p^s-h)(p^s-m)}
+ \frac{d_V V(1)}{(p^s-1)(p^s-v)(p^s-m)}\nonumber\\&
+ \frac{H(1)}{(p^s-h)(p^s-m)}
+ \frac{V(1)}{(p^s-h)(p^s-m)}
+ \frac{D(1)}{(p^s-1)(p^s-m)}
+ \frac{I(1)}{p^s-m}.
\end{align}
Thus, we see that the poles of the meromorphic continuation of $\displaystyle \sum_{k=1}^{\infty} \frac{I(k)}{p^{(k-1)s}} = p^s \sum_{k=1}^{\infty} \frac{I(k)}{p^{ks}}$ correspond to what is listed in Table \ref{table:allthepoles}.
\end{proof}
\newparagraph

\subsection{Worked Example: Modified Cantor Grill} \label{chap:cantorstripesathomeexample}
%Worked Example for Cantor Grill (Cantor Stripes): Equation 2.2.99 (page 140) gets dimensions at 0, 1, log_3(2), log_3(6) [a = 1/3, d = 1], our result will be way less precise but should still get the point across
Consider the Sierpi\'nski Carpet modification constructed on the $3\times3$ grid given by Figure \ref{fig:cantorstripesathomepicture}.
\begin{figure}[H]
\includegraphics[scale=0.3]{Images/NEquals0}
\includegraphics[scale=0.3]{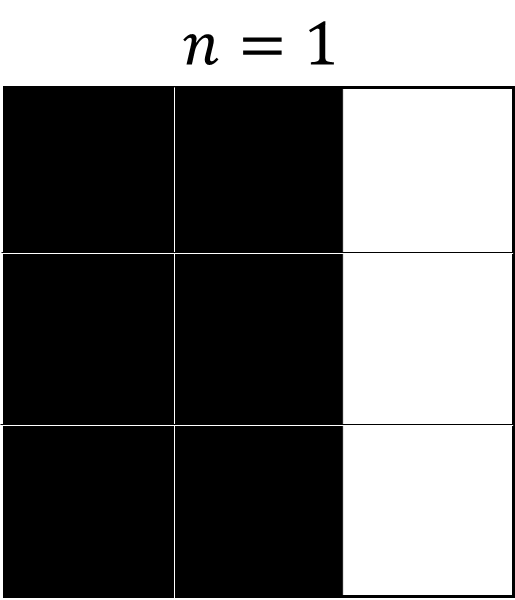}
\includegraphics[scale=0.3]{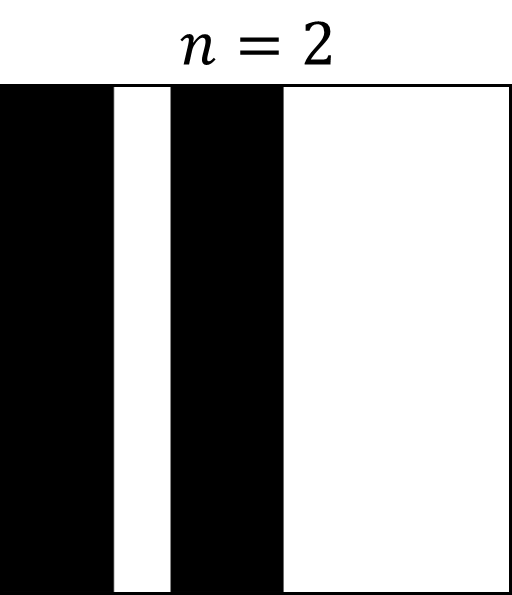}\\
\includegraphics[scale=0.3]{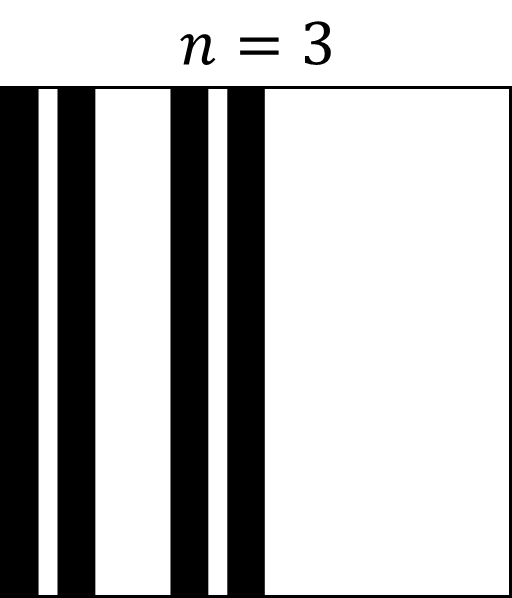}
\includegraphics[scale=0.3]{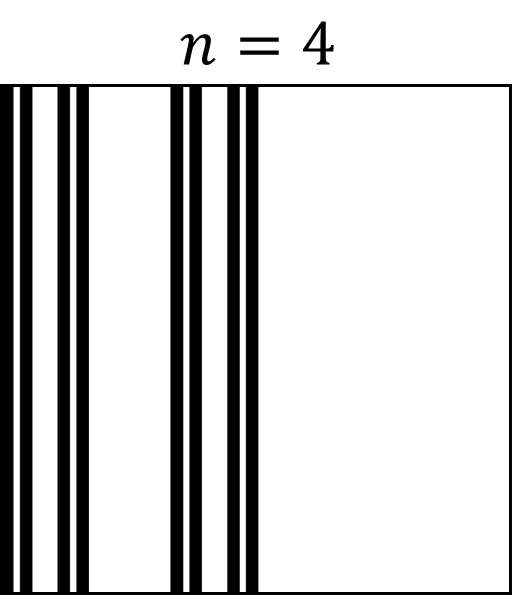}
\includegraphics[scale=0.3]{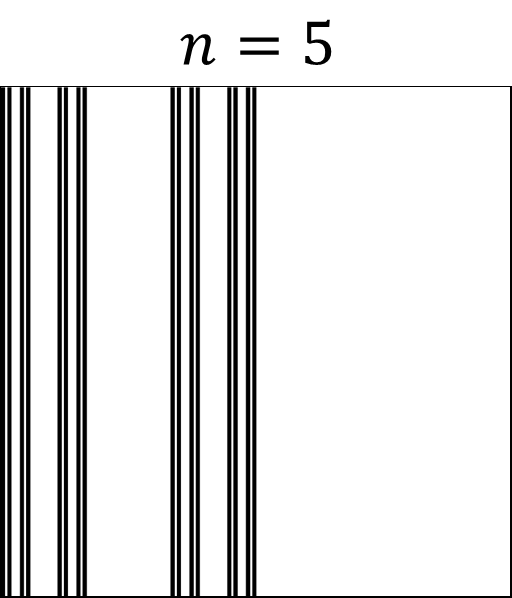}\\
\caption{The first 5 steps of the construction of a Sierpi\'nski Carpet modification, which we call a ``modified Cantor Grill''.}
\label{fig:cantorstripesathomepicture}
\end{figure}
In this carpet, $m = 6$ and $p = 3$. We will first compute the complex dimensions of this modified Cantor Grill using Theorem \ref{thm:gammaconnected2}. We can clearly see that the largest value $\alpha \in [0,\sqrt{2}]$ that makes $A_{\alpha}$ not path-connected (as guaranteed by Theorem \ref{thm:thresholdexists}) is $\displaystyle \alpha = \frac{1}{2} \cdot \sum_{k=2}^{\infty} \frac{1}{3^k} = \frac{1}{12}$. We can then observe how the scaled copies of $A_t$ behave for $t \in [\alpha,3\alpha]$ and $t \in \left[\frac{\alpha}{3}, \alpha\right]$, depicted in Figure \ref{fig:cantorstripesthresholds}.
\begin{figure}[H]
\boxed{\includegraphics[scale=0.3]{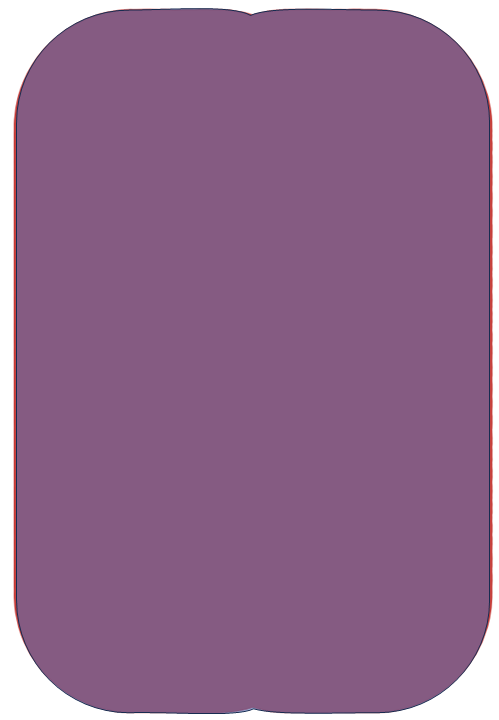}}
\boxed{\includegraphics[scale=0.3]{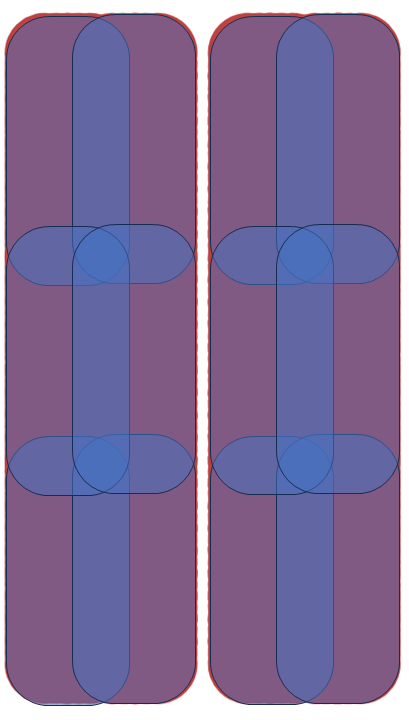}}
\boxed{\includegraphics[scale=0.3]{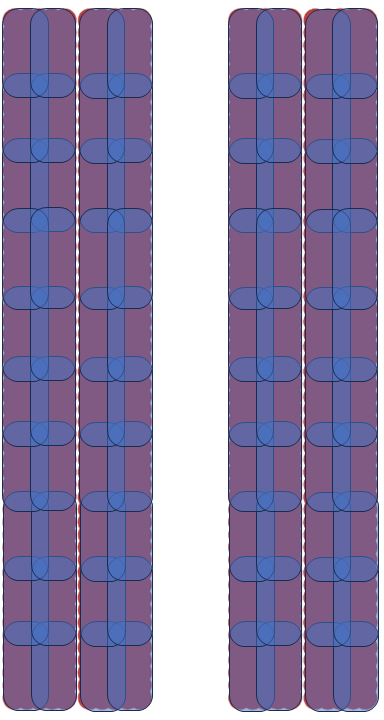}}
\caption{Left to right: the tubular neighborhood $A_t$ for $t = 3\alpha$, $t = \alpha$, and $t = \frac{\alpha}{3}$, with scaled copies of $A_{3\alpha}$ marked in blue.}
\label{fig:cantorstripesthresholds}
\end{figure}
From Figure \ref{fig:cantorstripesthresholds}, we can see that $\left|\mathfrak{I}_t^{type}\right| > 0$ for all $t \in \left[\frac{\alpha}{3}, \alpha\right]$ for each of the 9 different intersection types. We can now compute the counts visually on $A_1, A_2$, and $A_3$, as shown in Figure \ref{fig:cantorstripesathomeintersections} and Table \ref{table:cantorstripesathomeintersections}.

\begin{figure}[H]
\boxed{\includegraphics[scale=0.15]{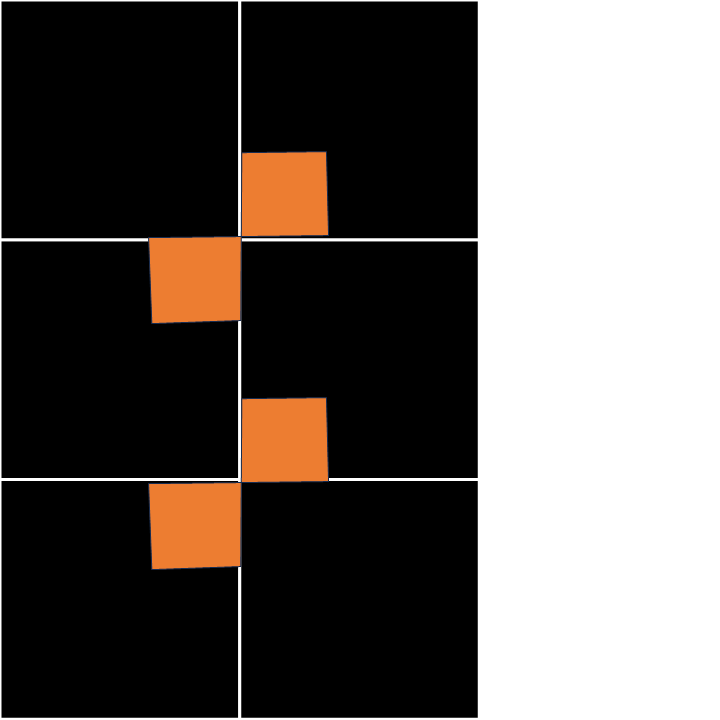}}
\boxed{\includegraphics[scale=0.15]{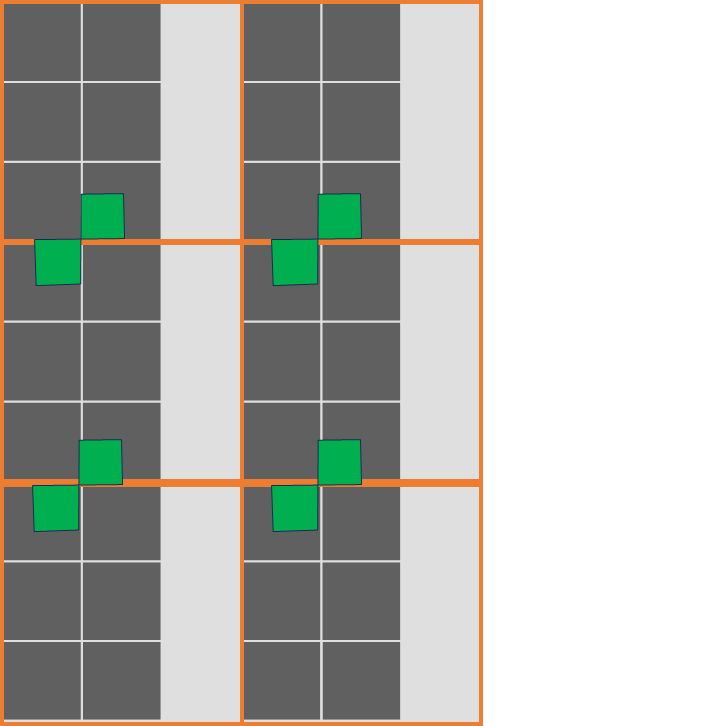}}
\boxed{\includegraphics[scale=0.4]{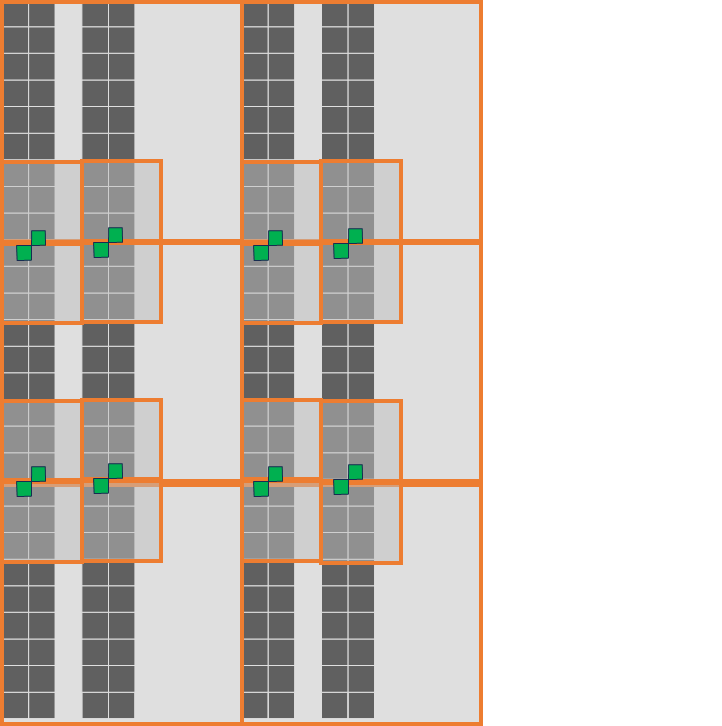}}
\caption{Visual display of $I_{ld,ru}(1)$ (orange in $A_1$), and the relevant parts of $H_{ld,ru}(k)$ that need to be counted (green in $A_2$ and $A_3$). Note that $V_{ld,ru}(k) = 0$ and $D_{ld,ru}(k) = 0$ for all $k \in \mathbb{N}$.}
\label{fig:cantorstripesathomeintersections}
\end{figure}

\begin{table}[!h]
\begin{center}
\begin{tabular}{|c||c|c|c|c|}
\hline
\textbf{Intersection Type} & \textbf{$I_{type}(k)$ for $k=1,2,3,4$} & \textbf{$H_{type}(1)$} & \textbf{$H_{type}(2)$} & \textbf{$h_{type}$} \\
\hline
\hline
$h$ & 4,32,208,1280 & 8 & 64 & 2 \\
\hline
$v$ & 3,18,108,648 & 0 & 0 & 0 \\
\hline
$lu,rd$ & 2,16,104,640 & 4 & 32 & 2 \\
\hline
$ld,ru$ & 2,16,104,640 & 4 & 32 & 2 \\
\hline
$lu,ld,ru$ & 2,16,104,640 & 4 & 32 & 2 \\
\hline
$lu,rd,ru$ & 2,16,104,640 & 4 & 32 & 2 \\
\hline
$lu,ld,rd$ & 2,16,104,640 & 4 & 32 & 2 \\
\hline
$ld,rd,ru$ & 2,16,104,640 & 4 & 32 & 2 \\
\hline
$lu,ld,rd,ru$ & 2,16,104,640 & 4 & 32 & 2 \\
\hline
\end{tabular}
\caption{The relevant values of $I_{type}$ and $H_{type}$ for each intersection type. Note that $V_{type}(1) = 0$ and $D_{type}(1) = 0$ for each intersection type.}
\label{table:cantorstripesathomeintersections}
\end{center}
\end{table}
From Table \ref{table:cantorstripesathomeintersections}, we obtain the following formulas for $I_{type}(k)$:
\begin{align}
I_h(k) &= H_h(1) \frac{m^{k-1} - h_h^{k-1}}{m-h_h} + I_h(1) m^{k-1} = 6^k - 2^k, \\
I_v(k) &= I_v(1) m^{k-1} = 3 \cdot 6^{k-1}, \\
I_{lu,rd}(k) &= H_{lu,rd}(1) \frac{m^{k-1} - h_{lu,rd}^{k-1}}{m-h_{lu,rd}} + I_{lu,rd}(1) m^{k-1} = 3 \cdot 6^{k-1} - 2^{k-1}, \\
I_{ld,ru}(k) &= H_{ld,ru}(1) \frac{m^{k-1} - h_{ld,ru}^{k-1}}{m-h_{ld,ru}} + I_{ld,ru}(1) m^{k-1} = 3 \cdot 6^{k-1} - 2^{k-1}, \\
\vdots\nonumber\\
I_{lu,ld,rd,ru}(k) &= H_{lu,ld,rd,ru}(1) \frac{m^{k-1} - h_{lu,ld,rd,ru}^{k-1}}{m-h_{lu,ld,rd,ru}} + I_{lu,ld,rd,ru}(1) m^{k-1}\nonumber\\&= 3 \cdot 6^{k-1} - 2^{k-1}.
\end{align}
Proceeding as in the proof of Theorem \ref{thm:gammaconnected2} yields:
\begin{equation}
\mathscr{P}(A) \subseteq \left\{\log_3(2) + \frac{2\pi i j}{\ln(3)}, \log_3(6) + \frac{2\pi i j}{\ln(3)} : j \in \mathbb{Z}\right\}.
\end{equation}

We can compute the complex dimensions in another way. First, consider a modified Cantor Set constructed by starting with the interval $[0,1] \subseteq \mathbb{R}$, dividing it into thirds, and then removing the highest-valued third. We will define it as follows:
\begin{align}
\mathfrak{C}^* &= [0,1] \backslash \left(\left(\frac{2}{3},1\right] \cup
\left(\left(\frac{2}{9},\frac{1}{3}\right] \cup \left(\frac{5}{9},\frac{2}{3}\right]\right)\right.\nonumber\\&\left. \cup
\left(\left(\frac{2}{27},\frac{1}{9}\right] \cup \left(\frac{5}{27},\frac{2}{9}\right] \cup \left(\frac{11}{27},\frac{4}{9}\right] \cup \left(\frac{14}{27},\frac{5}{9}\right]\right) \cup\dots\right).
\end{align}
Assume without loss of generality that $\delta > \frac{1}{12}$ (Corollary \ref{thm:anydelta} justifies doing this). We can compute the tubular zeta function of $\mathfrak{C}^*$ embedded in $\mathbb{R}^1$ by directly making use of the definition. For values $t \in (\frac{1}{12},\delta]$, we obtain:
\begin{equation}
|\mathfrak{C}_t^*| = \left(1 - \sum_{j=1}^{\infty} \frac{1}{3^j}\right) + 2t = \frac{1}{2} + 2t.
\end{equation}
When considering $t \in \left(\frac{1}{12\cdot3^k}, \frac{1}{12\cdot3^{k-1}}\right]$ for each $k \in \mathbb{N}$, we obtain the following:
\begin{equation}
|\mathfrak{C}_t^*| = 2^k \left(\left(\frac{1}{3^k} - \sum_{j=k+1}^{\infty} \frac{1}{3^j}\right) + 2t\right) = 2^k \left(\frac{1}{2\cdot3^k} + 2t\right).
\end{equation}
By properties of Lebesgue integration, we can partition the interval $[0,\delta]$ into a countable number of disjoint subintervals, which allows us to compute the tubular zeta function as follows:
\begin{align}
\tilde{\zeta}_{\mathfrak{C}^*}(s) &= \int_0^{\delta} t^{s-2} |\mathfrak{C}_t^*| \,\mathrm{d}t \\
&= \int_{\frac{1}{12}}^{\delta} t^{s-2} \left(\frac{1}{2} + 2t\right) + \sum_{k=1}^{\infty} \int_{\frac{1}{12\cdot3^k}}^{\frac{1}{12\cdot3^{k-1}}} t^{s-2} \left(2^k \left(\frac{1}{2\cdot3^k} + 2t\right)\right) \,\mathrm{d}t \\
&= \frac{2\delta^s - \frac{2}{12^s}}{s} + \frac{\frac{1}{2}\delta^{s-1} - \frac{6}{12^s}}{s-1}
+ \sum_{k=1}^{\infty} \frac{2^k}{3^{(k+1)s}} \left(\frac{1 - 4s + 3^s (2s-1)}{s(s-1)}\right) \\
&= \frac{2\delta^s - \frac{2}{12^s}}{s} + \frac{\frac{1}{2}\delta^{s-1} - \frac{6}{12^s}}{s-1}
+ \frac{2 - 8s + 3^s (4s-2)}{3^s (3^s-2) s(s-1)}. \label{eq:onedzeta}
\end{align}
After meromorphic extension to all of $\mathbb{C}$, we see that $\tilde{\zeta}_{\mathfrak{C}^*}(s)$ has removable singularities at $s = 0$ and $s = 1$, and it has simple poles at all solutions to the equation $3^s - 2 = 0$. Thus, we have that
\begin{equation}
\mathscr{P}(\mathfrak{C}^*) = \left\{\log_3(2) + \frac{2\pi i j}{\ln(3)} : j \in \mathbb{Z}\right\}.
\end{equation}

Note that the highest real part of $\mathscr{P}(\mathfrak{C}^*)$ is $\log_3(2) < 1$, and thus we can apply \cite[Theorem 4.7.3]{lapidusbook} to see that embedding $\mathfrak{C}^*$ into $\mathbb{R}^2$ yields the same
\begin{equation}
\mathscr{P}\left(\mathfrak{C}_{\mathbb{R}^2}^*\right) = \left\{\log_3(2) + \frac{2\pi i j}{\ln(3)} : j \in \mathbb{Z}\right\}.
\end{equation}
Lastly, we have by definition that $A = \mathfrak{C}^* \times [0,1]$. Thus, by \cite[Lemma 2.2.31]{lapidusbook}, we see that $\tilde{\zeta}_A(s) = \tilde{\zeta}_{\mathfrak{C}^*}(s-1) + \tilde{\zeta}_{\mathfrak{C}_{\mathbb{R}^2}^*}(s)$, from which we conclude that
\begin{equation}
\mathscr{P}(A) = \left\{\log_3(2) + \frac{2\pi i j}{\ln(3)}, \log_3(6) + \frac{2\pi i j}{\ln(3)} : j \in \mathbb{Z}\right\}.
\end{equation}
This demonstrates that Theorem \ref{thm:gammaconnected2} will not contradict known results in the literature, though it may give less information than what was previously known (in this case giving that $\mathscr{P}(A) \subseteq \left\{\log_3(2) + \frac{2\pi i j}{\ln(3)}, \log_3(6) + \frac{2\pi i j}{\ln(3)} : j \in \mathbb{Z}\right\}$ and unable to determine that the two sets are in fact equal). In other cases, such as discussed in Section \ref{chap:cantordustexample}, Theorems \ref{thm:alphaconnected} and \ref{thm:gammaconnected2} may give a smaller set of possible poles than what was previously known in the literature.

\section{Future Study} \label{chap:futurestudy}
We discuss how the work in this paper can be adapted to work with ``polyhedral zeta functions'' (Section \ref{chap:polyhedralzetafunctions}). We describe limitations in applying the methods to non-dust type Sierpi\'nski Carpet modifications (Section \ref{chap:nondustcarpets}) and Bedford--McMullen Carpets (Section \ref{chap:bedfordmcmullencarpets}). Lastly, we discuss an extension of the ``completely dusty'' case into $\mathbb{R}^N$ for any $N \in \mathbb{N}$, culminating in Theorem \ref{thm:alphaconnectedRN} (Section \ref{chap:mengersponges}).

\subsection{Polyhedral Zeta Functions} \label{chap:polyhedralzetafunctions}
Recent developments by Claire David and Michel L. Lapidus (\cite{polyhedralzetafunctions}, \cite{kochcurvepolyhedral}, \cite{weierstrasscurve1}, \cite{weierstrasscurve2}, and others) give rise to ``polyhedral zeta functions'', which compute the exact set of complex dimensions by utilizing a ``polyhedral neighborhood'' (as opposed to the tubular neighborhood). Currently, these polyhedral zeta functions have been applied to continuous curves. In order to apply them to Sierpi\'nski Carpets and modifications, we would need to adapt those methods to work in cases where there are no continuous non-constant curves in the limit (such as the Ternary Cantor Set). It would be interesting to use these new methods to compute the exact complex dimensions where possible.
\newparagraph

%%%Just briefly talk about it
%Question for Lapidus: what would the polyhedral neighborhoods look like for something that's not a curve full of vertices? For example, what would it look like for Cantor Dust?
    %Potentially the methods could more directly apply to the Branch Types since their boundaries will be one continuous curve...?

\subsection{Non-Dust Sierpi\'nski Carpet Modifications} \label{chap:nondustcarpets}
%This does NOT apply whatsoever, it's doomed from the start, need totally different methods
The combinatorial methods described in Section \ref{chap:computingdust} can still be applied to non-dust type Sierpi\'nski Carpet modifications, but the results may not be the possible set of complex dimensions. To see the distinction, consider the Sierpi\'nski Carpet modification on the $3\times3$ grid given by Figure \ref{fig:suscarpet}.
\begin{figure}[H]
\includegraphics[scale=0.3]{Images/NEquals0}
\includegraphics[scale=0.3]{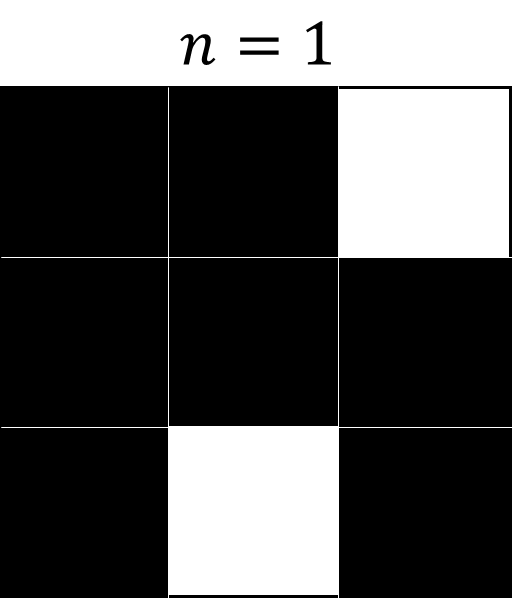}
\includegraphics[scale=0.3]{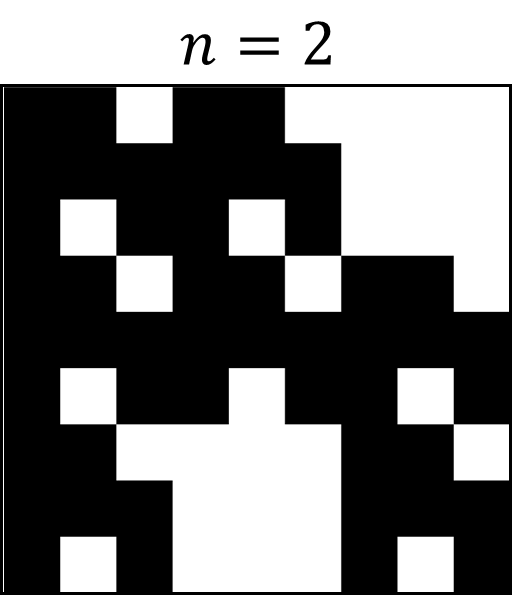}\\
\includegraphics[scale=0.3]{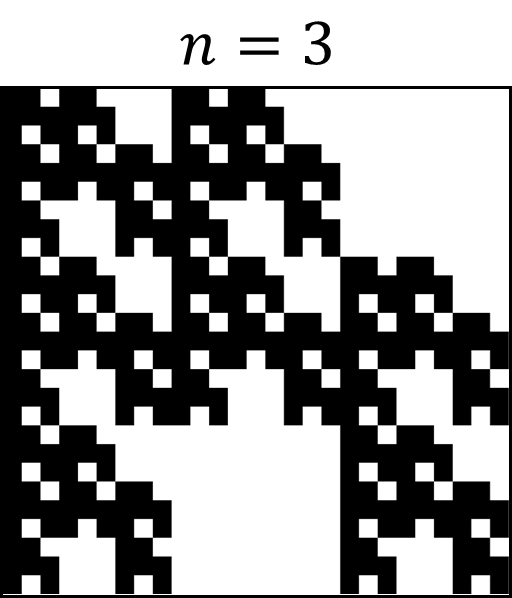}
\includegraphics[scale=0.3]{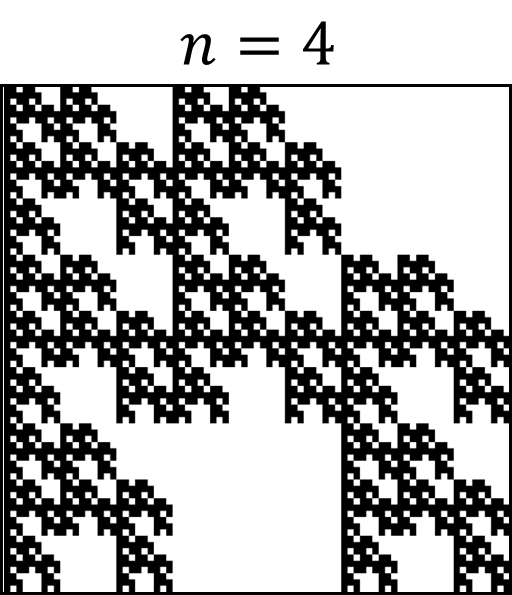}
\includegraphics[scale=0.3]{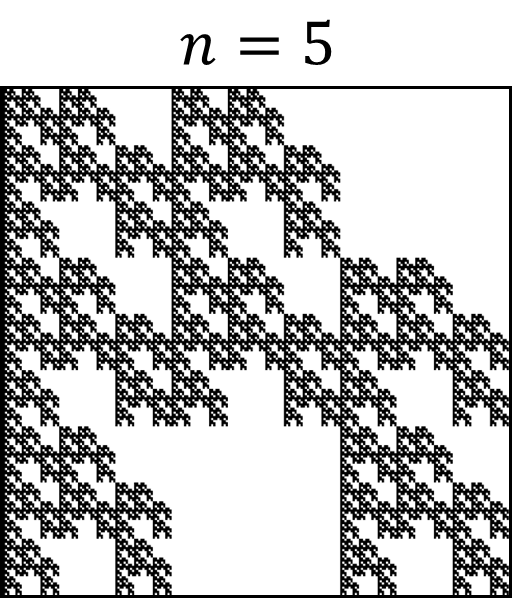}\\
\caption{The first 5 steps of the construction of a Sierpi\'nski Carpet modification.}
\label{fig:suscarpet}
\end{figure}

Applying the combinatorial methods, we would produce the following set of possible poles:
\begin{equation} \label{eq:susdimensionscombinatorial}
\mathscr{P}(A) \subseteq \left\{0 + \frac{2\pi i j}{\ln(3)}, 1 + \frac{2\pi i j}{\ln(3)}, \log_3(7) + \frac{2\pi i j}{\ln(3)} : j \in \mathbb{Z}\right\}.
\end{equation}
However, this does not match the results shown in \cite[Equation (10.190)]{mastersthesis}, which were the following:
\begin{align} \label{eq:susdimensionsmasters}
\mathscr{P}(A) &\subseteq \left\{0 + \frac{2\pi i j}{\ln(3)}^{[2]}, 0^{[3]}, \log_3(3-\sqrt{2}) + \frac{2\pi i j}{\ln(3)}, \log_3(2) + \frac{2\pi i j}{\ln(3)}^{[2]},\right.\nonumber\\&\left. 1, \log_3(3+\sqrt{2}) + \frac{2\pi i j}{\ln(3)}, \log_3(7) + \frac{2\pi i j}{\ln(3)} : j \in \mathbb{Z}\right\}.
\end{align}
where the ${}^{[n]}$ indicates that the pole is of order $n$ (all other poles listed are assumed to be simple). The fact that the combinatorial methods discussed cannot possibly produce poles of $\log_3(3 \pm \sqrt{2})$ is a strong indication that they will not be helpful in determining the set of possible complex dimensions for a Sierpi\'nski Carpet modification that is not of dust type. The author is actively researching this topic.
\newparagraph

\subsection{Bedford--McMullen Carpets} \label{chap:bedfordmcmullencarpets}
%Potentially could be done, need to adjust \ref{thm:scaling} to allow multiple scaling factors at the same time
Bedford--McMullen Carpets can be constructed by using the definition of a Sierpi\'nski Carpet modification and allowing to divide into a $p\times q$ grid for some $p,q \in \mathbb{N}$ where $p \neq q$. An example is given in Figure \ref{fig:bedfordmcmullencarpet}.

\begin{figure}[H]
\includegraphics[scale=0.3]{Images/NEquals0}
\includegraphics[scale=0.3]{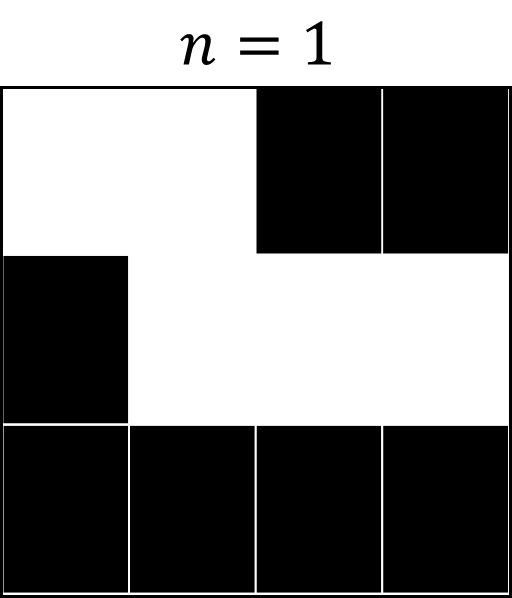}
\includegraphics[scale=0.3]{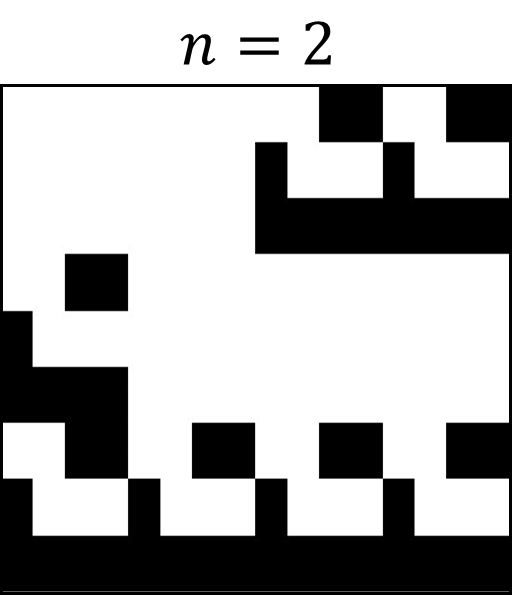}\\
\includegraphics[scale=0.3]{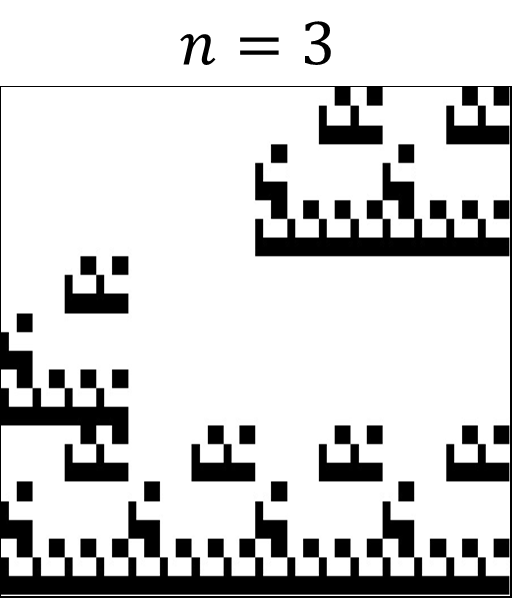}
\includegraphics[scale=0.3]{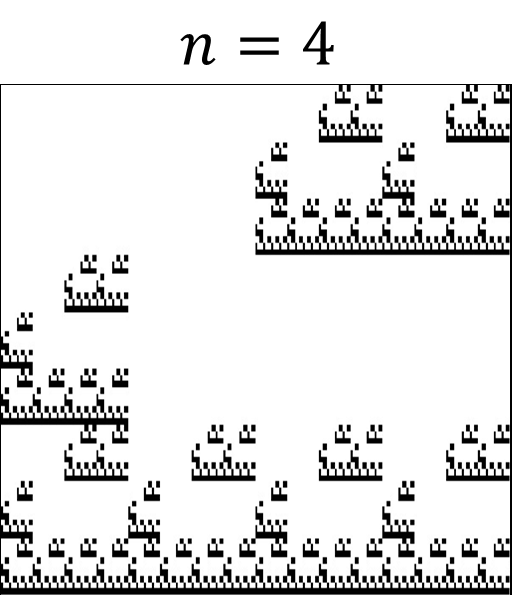}
\includegraphics[scale=0.3]{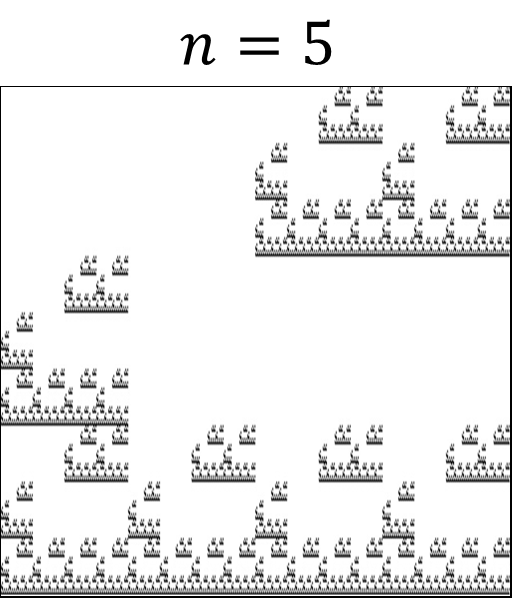}\\
\caption{The first 5 steps of the construction of a Bedford--McMullen Carpet in a $4\times3$ grid, which would be classified as ``dust type''.}
\label{fig:bedfordmcmullencarpet}
\end{figure}

These fractals are self-affine rather than self-similar, and they will have two simultaneous scaling factors: $\frac{1}{p}$ and $\frac{1}{q}$. At each finite step of the construction, we have rectangles instead of squares. The definition of ``dust type'' still makes sense in this new context, and the combinatorial methods discussed in Section \ref{chap:countingintersections} will still apply since the ``intersection types'' originate from the ways that squares (and rectangles) can share edges and corners in Euclidean $\mathbb{R}^2$ space. However, Theorems \ref{thm:scaling} and \ref{thm:infinitethresholdsexist} require the uniform scaling given by the construction of a Sierpi\'nski Carpet modification, and thus Theorems \ref{thm:alphaconnected}, \ref{thm:gammaconnected1}, and \ref{thm:gammaconnected2} will not immediately apply. In order to apply the results of this paper to dust type Bedford--McMullen Carpets, we will have to modify Theorems \ref{thm:scaling} and \ref{thm:infinitethresholdsexist} to allow for 2-directional non-uniform scaling. We plan to address this problem in future work.
\newparagraph

\subsection{Menger Sponge Modifications} \label{chap:mengersponges}
We can define analogues of Sierpi\'nski Carpet modifications in $\mathbb{R}^N$ for any $N \in \mathbb{N}$. We can then extend Theorem \ref{thm:alphaconnected} to this new $\mathbb{R}^N$ context (Theorem \ref{thm:alphaconnectedRN}). However, there are limitations to our current definition of ``dust type'' which hinder our ability to extend the rest of the paper's results to $\mathbb{R}^N$.
%Definitely can be done, just account for all the ways that cubes can share corners/edges/faces. Might need to adjust definition of "Dust Type"?
\begin{definition}\label{def:mengerspongemodification}
We construct a \textit{Menger Sponge modification} by the following algorithm:
\begin{enumerate}
    \item Begin with the unit $N$-cube $A_0 = [0,1] \times [0,1] \times\dots\times [0,1] \subseteq \mathbb{R}^N$ for $N \geq 3$.
    \item Dissect the $N$-cube into a uniform $p \times\dots\times p$ grid, producing nonoverlapping $N$-cubes of side length $\frac{1}{p}$.
    \item (Produce Level 1 of the fractal construction) Choose a collection of $m \in \mathbb{N}$ grid $N$-cubes to keep, removing all the others. We will call the result $A_1$. We require $m \in [1,p^N-1] \cap \mathbb{N}$ in order to ensure $A_1 \neq \emptyset$ and $A_1 \neq A_0$.
    \item (Produce Level $n$ of the fractal construction from Level $n-1$) For each $N$-cube that remains in $A_{n-1}$ (Level $n-1$ of the fractal construction), divide each of them into their own $p \times\dots\times p$ grids (making nonoverlapping $N$-cubes of side length $\frac{1}{p^n}$) and remove the corresponding collection of $N$-cubes from each grid. We will call the result $A_n$. \label{mengerrecursionstep}
    \item Repeat Step \ref{mengerrecursionstep} infinitely-many times to produce $\displaystyle A = \bigcap_{n=1}^{\infty} A_n$.
\end{enumerate}
\end{definition}

The theorems we proved in Sections \ref{chap:scalingtheorem} and \ref{chap:entireextensiontheorem} will apply to this situation as well. Furthermore, the definition of ``completely dusty'' (Definition \ref{def:alphaconnected}) and resulting Theorem \ref{thm:alphaconnected} extend nicely to Menger Sponge modifications, as demonstrated below.
\newparagraph

\begin{theorem}[Completely Dusty Theorem for $\mathbb{R}^N$] \label{thm:alphaconnectedRN}
Let $N \in \mathbb{N}$ and let $A \subseteq \mathbb{R}^N$ be a Menger Sponge modification constructed by keeping $m \geq 2$ $N$-cubes from the $p \times\dots\times p$ grid. Suppose $A$ has the property that there exists $\alpha > 0$ such that $A_{\alpha}$ is a disjoint union of $m$ copies of $\frac{1}{p} \left(A_{p\alpha}\right)$. Then $\tilde{\zeta}_A$ has a meromorphic continuation to all of $\mathbb{C}$ and
\begin{equation}
\mathscr{P}(A) \subseteq \left\{\log_p(m) + \frac{2\pi i j}{\ln(p)}: j \in \mathbb{Z}\right\}.
\end{equation}
\end{theorem}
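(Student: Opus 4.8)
The plan is to mirror the proof of Theorem~\ref{thm:alphaconnected} almost verbatim, substituting the ambient dimension $N$ for $2$ throughout and invoking only those helper results that were already stated for arbitrary $\mathbb{R}^N$: the Scaling Theorem~\ref{thm:scaling}, the Entire Extension Theorem~\ref{thm:entireextension}, and the Inflation Lemma~\ref{thm:inflationlemma}. The single ingredient from the planar argument that is \emph{not} available here is the Infinite Thresholds Theorem, which was established only for $\mathbb{R}^2$ Dust Type carpets. In its place I would derive the self-similar recursion for $|A_t|$ directly from the single-scale hypothesis, and the rest of the computation carries over unchanged.

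First I would promote the hypothesis into a recursion valid on every scale $[\alpha/p^k,\alpha/p^{k-1}]$. Since neighborhoods are monotone, $A_t \subseteq A_\alpha$ for $t \le \alpha$, so the $m$ top-level pieces $(\tfrac1p A)_t + c_i$ coming from the self-similar decomposition $A = \bigcup_{i=1}^m(\tfrac1p A + c_i)$ stay pairwise disjoint, giving $|A_t| = m\,|(\tfrac1p A)_t|$ for $t \in [0,\alpha]$. Rewriting $(\tfrac1p A)_t = \tfrac1p(A_{pt})$ via Lemma~\ref{thm:inflationlemma} and feeding $A_{pt}$ (with $pt \le \alpha$) back into the same decomposition, an induction on $k$ would yield, for $t \in [\alpha/p^k,\alpha/p^{k-1}]$,
\begin{equation}
|A_t| = m^k\left|\left(\frac{1}{p^k}A\right)_t\right|.
\end{equation}
Setting $\tilde A(t) = |A_t|$ on $[\alpha,p\alpha]$ and assuming without loss of generality that $\alpha < \delta \le p\alpha$, I would split the Mellin integral as
\begin{equation}
\tilde\zeta_A(s) = \int_\alpha^\delta t^{s-N-1}\tilde A(t)\,dt + \sum_{k=1}^\infty \int_{\alpha/p^k}^{\alpha/p^{k-1}} t^{s-N-1}\,m^k\left|\left(\frac{1}{p^k}A\right)_t\right|\,dt.
\end{equation}

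Next I would evaluate each summand by writing the sub-interval integral as a difference of integrals over $[0,\alpha/p^{k-1}]$ and $[0,\alpha/p^k]$ and applying the Scaling Theorem~\ref{thm:scaling} with $\lambda = p^{-k}$, which contributes the factor $\lambda^s = p^{-ks}$ (the measure factor $\lambda^N$ and the change of variables are already packaged into that statement). This turns each term into $\frac{m^k}{p^{ks}}\int_\alpha^{p\alpha} t^{s-N-1}\tilde A(t)\,dt$, and summing the resulting geometric series on $\re(s) > \log_p(m)$ produces
\begin{equation}
\tilde\zeta_A(s) = \int_\alpha^\delta t^{s-N-1}\tilde A(t)\,dt + \frac{m}{p^s - m}\int_\alpha^{p\alpha} t^{s-N-1}\tilde A(t)\,dt,
\end{equation}
which furnishes the meromorphic continuation to all of $\mathbb{C}$.

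Finally, because $\alpha > 0$ and $\tilde A$ is bounded and monotone, the Entire Extension Theorem~\ref{thm:entireextension} shows both truncated integrals extend to entire functions, so the only poles are the zeros of $p^s - m$, namely $s = \log_p(m) + \frac{2\pi i z}{\ln(p)}$ for $z \in \mathbb{Z}$, each simple. The main obstacle is entirely concentrated in the first step: without the Infinite Thresholds Theorem in $\mathbb{R}^N$, the disjointness of the $m^k$ copies at scale $t \in [\alpha/p^k,\alpha/p^{k-1}]$ must be argued by hand through the monotonicity-plus-inflation induction sketched above, rather than quoted. Everything downstream is then formally identical to the planar case, since the remaining helper theorems were already formulated for arbitrary $\mathbb{R}^N$.
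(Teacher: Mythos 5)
Your proposal is correct and follows essentially the same route as the paper's own proof: the paper likewise derives the multiscale identity $|A_t| = m^k\left|\left(\frac{1}{p^k}A\right)_t\right|$ directly from the single-scale disjointness hypothesis via the self-similar decomposition and Lemma~\ref{thm:inflationlemma} (it does not invoke the $\mathbb{R}^2$-only Infinite Thresholds Theorem), then splits the Mellin integral, applies Theorem~\ref{thm:scaling}, sums the geometric series to obtain the factor $\frac{m}{p^s-m}$, and concludes with Theorem~\ref{thm:entireextension}. Your monotonicity-plus-induction justification of the disjointness at smaller scales is, if anything, more explicit than the paper's appeal to the maximality of $\alpha$, and your exponent $t^{s-N-1}$ matches the definition of the tubular zeta function (the paper's proof writes $t^{s-N}$, an apparent typo).
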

\begin{proof}
First, note that since $A \subseteq [0,1]\times\dots\times[0,1] \subseteq \mathbb{R}^N$, we have that choosing $\delta > \sqrt{N}$ yields $A \subseteq B(x,\delta)$ for any $x \in A$, which makes $A_{\delta}$ simply-connected. Thus, the set $\mathscr{T} = \{t > 0 : A_t \text{ is a disjoint union of $m$ copies of $\frac{1}{p} \left(A_{pt}\right)$}\}$ is bounded above by $\delta$, and is non-empty in light of the assumption that $\alpha \in \mathscr{T}$. By completeness of $\mathbb{R}$, the set $\mathscr{T}$ has a supremum. Without loss of generality, we will assume $\alpha = \sup(\mathscr{T})$. \\

Since $A$ is a Menger Sponge modification constructed by keeping $m$ $N$-cubes from a $p\times\dots\times p$ grid, we have a self-similar fractal which can be written as follows:
\begin{equation}
A = \bigcup_{i=1}^m \left(\frac{1}{p} A + (x_{1,i},x_{2,i},\dots,x_{N,i})\right),
\end{equation}
where $(x_{1,i},\dots,x_{N,i})$ are the corners of the sub-cubes and we translate $\frac{1}{p} A$ in order to line up with the grid. Since this is a recursive definition, we have the following for each $n \in \mathbb{N}$:
\begin{equation}
A = \bigcup_{j=1}^{m^n} \left(\frac{1}{p^n} A + (x_{1,j},x_{2,j},\dots,x_{N,j})\right),
\end{equation}
where $(x_{1,j},\dots,x_{N,j})$ are the corners of the sub-cubes and we translate $\frac{1}{p^n} A$ in order to line up with the grid. Since the Euclidean metric is translation-invariant in $\mathbb{R}^N$, we have the following for each $t \in [0,\delta]$:
\begin{equation} \label{eq:selfsimilardefnd}
A_t = \bigcup_{j=1}^{m^n} \left(\left(\frac{1}{p^n} A\right)_t + (x_{1,j},x_{2,j},\dots,x_{N,j})\right).
\end{equation}
By Lemma \ref{thm:inflationlemma}, we obtain the following:
\begin{equation}
A_{\frac{\alpha}{p^n}} = \bigcup_{j=1}^{m^n} \left(\left(\frac{1}{p^n} A\right)_{\frac{\alpha}{p^n}} + (x_{1,s},\dots,x_{N,s})\right)
= \bigcup_{j=1}^{m^n} \left(\frac{1}{p^n} A_{\alpha} + (x_{1,s},\dots,x_{N,s})\right).
\end{equation}
Since $t = \alpha$ is the maximal number such that $A_t$ is a disjoint union of $m$ copies of $\frac{1}{p} \left(A_{pt}\right)$, we now have that $A_{\frac{\alpha}{p^n}}$ is a disjoint union of $m^n$ copies of $\frac{1}{p^n} A_{\alpha}$. \\

Now consider the function $\tilde{A}(t) : [\alpha, p\alpha] \rightarrow \mathbb{R}$ such that $\tilde{A}(t) = |A_t|$ whenever $t \in [\alpha,p\alpha]$. Then we have that whenever $t \in \left[\frac{\alpha}{p^k}, \frac{\alpha}{p^{k-1}}\right]$ for some $k \in \mathbb{N}$, we obtain the following:
\begin{equation}
|A_t| = m \left|\left(\frac{1}{p} A\right)_{p t}\right| =\dots= m^k \left|\left(\frac{1}{p^k} A\right)_{p^k t}\right|.
\end{equation}
By Corollary \ref{thm:anydelta}, we can assume without loss of generality that $\alpha < \delta \leq p\alpha$. Plugging this into the tubular zeta function formula and using Theorem \ref{thm:scaling} yields:
\begin{align}
\tilde{\zeta}_A(s)
&= \int_{\alpha}^{\delta} t^{s-N-1} |A_t| \,\mathrm{d}t + \sum_{k=1}^{\infty} \int_{\frac{\alpha}{p^k}}^{\frac{\alpha}{p^{k-1}}} t^{s-N-1} |A_t| \,\mathrm{d}t \\
&= \int_{\alpha}^{\delta} t^{s-N-1} \tilde{A}(t) \,\mathrm{d}t + \sum_{k=1}^{\infty} \int_{\frac{\alpha}{p^k}}^{\frac{\alpha}{p^{k-1}}} t^{s-N-1} m^k \left|\left(\frac{1}{p^k} A\right)_{p^k t}\right| \,\mathrm{d}t\\
&= \int_{\alpha}^{\delta} t^{s-N-1} \tilde{A}(t) \,\mathrm{d}t + \sum_{k=1}^{\infty} \int_0^{\frac{\alpha}{p^{k-1}}} t^{s-N-1} m^k \left|\left(\frac{1}{p^k} A\right)_{p^k t}\right| \,\mathrm{d}t \nonumber\\&- \int_0^{\frac{\alpha}{p^k}} t^{s-N-1} m^k \left|\left(\frac{1}{p^k} A\right)_{p^k t}\right| \,\mathrm{d}t \\
&= \int_{\alpha}^{\delta} t^{s-N-1} \tilde{A}(t) \,\mathrm{d}t + \sum_{k=1}^{\infty} \frac{1}{p^{k s}} \int_0^{p\alpha} t^{s-N-1} m^k \left|A_t\right| \,\mathrm{d}t \nonumber\\&- \frac{1}{p^{k s}} \int_0^{\alpha} t^{s-N-1} m^k \left|A_t\right| \,\mathrm{d}t\\
&= \int_{\alpha}^{\delta} t^{s-N-1} \tilde{A}(t) \,\mathrm{d}t + \sum_{k=1}^{\infty} \frac{m^k}{p^{k s}} \int_{\alpha}^{p\alpha} t^{s-N-1} \tilde{A}(t) \,\mathrm{d}t \\
&= \int_{\alpha}^{\delta} t^{s-N-1} \tilde{A}(t) \,\mathrm{d}t + \frac{m}{p^s - m} \int_{\alpha}^{p\alpha} t^{s-N-1} \tilde{A}(t) \,\mathrm{d}t.
\end{align}
Since $\tilde{A}(t)$ is monotone and continuous due to it being a domain restriction of $|A_t|$, and since $\alpha > 0$, we have by Theorem \ref{thm:entireextension} that $\displaystyle \int_{\alpha}^{\delta} t^{s-N-1} \tilde{A}(t) \,\mathrm{d}t$ and $\displaystyle \int_{\alpha}^{p\alpha} t^{s-N-1} \tilde{A}(t) \,\mathrm{d}t$ extend to entire functions. Thus, the only possible poles of $\tilde{\zeta}_A(s)$ will be solutions to $p^s - m = 0$, which yields simple poles for every $s = \log_p(m) + \frac{2\pi i j}{\ln(p)}$ for all $j \in \mathbb{Z}$. Therefore,
\begin{equation}
\mathscr{P}(A) \subseteq \left\{\log_p(m) + \frac{2\pi i j}{\ln(p)}: j \in \mathbb{Z}\right\}.
\end{equation}
\end{proof}

Note that for the Ternary Cantor Set $\mathfrak{C} \subseteq [0,1]$, the set $A = \mathfrak{C}\times\dots\times\mathfrak{C} \subseteq \mathbb{R}^N$ will meet the criteria for Theorem \ref{thm:alphaconnectedRN} for any $N \in \mathbb{N}$, since $\alpha = \frac{1}{6}$ is sufficient to make $A_{\alpha}$ equal $2^N$ disjoint copies of $A_{3\alpha}$. In such a case, the complex dimensions will be computed to be
\begin{equation}
\mathscr{P}(A) \subseteq \left\{\log_3\left(2^N\right) + \frac{2\pi i j}{\ln(3)}: j \in \mathbb{Z}\right\}.
\end{equation}
The traditional Menger Sponge will not meet the criteria for Theorem \ref{thm:alphaconnectedRN}, so its complex dimensions will need to be computed in a different way. To generalize beyond the $\mathbb{R}^N$ analogue of the ``completely dusty'' case, the combinatorics described in Section \ref{chap:computingdust} will need to be modified in order to accommodate additional intersection types created by $N$-cubes geometrically sharing corners, edges, faces, and $k$-dimensional face analogues for $3 < k < N$. However, we face a more fundamental problem with the current definition of ``dust type'', as it means that some Sierpi\'nski Carpet modifications would change their classifications depending on which $\mathbb{R}^N$ they are embedded into. An example of this phenomenon is shown in Figure \ref{fig:tetrislcarpet}.
\begin{figure}[H]
\includegraphics[scale=0.3]{Images/NEquals0}
\includegraphics[scale=0.3]{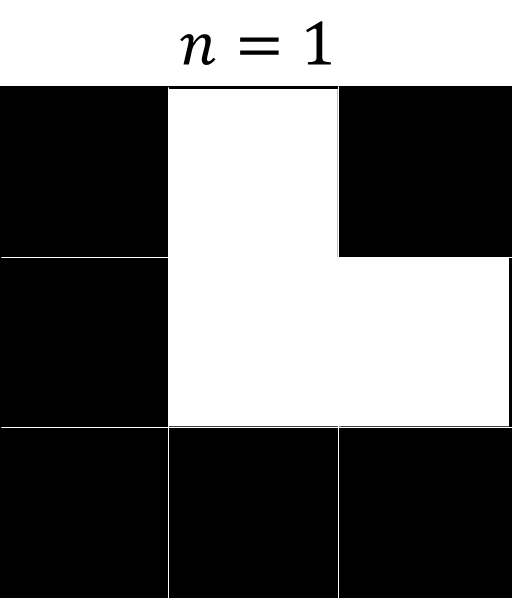}
\includegraphics[scale=0.3]{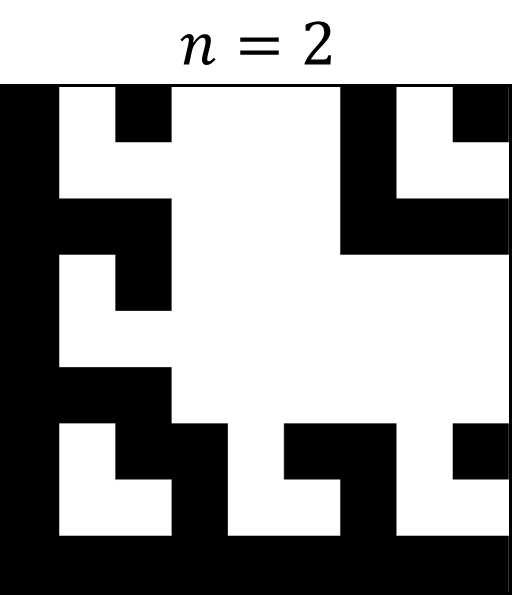}\\
\includegraphics[scale=0.3]{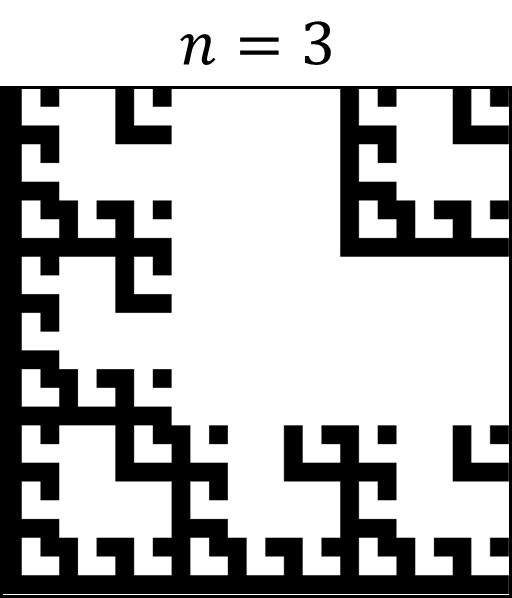}
\includegraphics[scale=0.3]{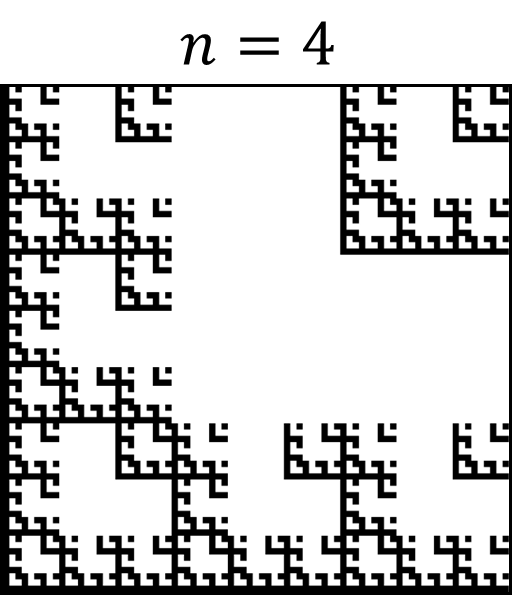}
\includegraphics[scale=0.3]{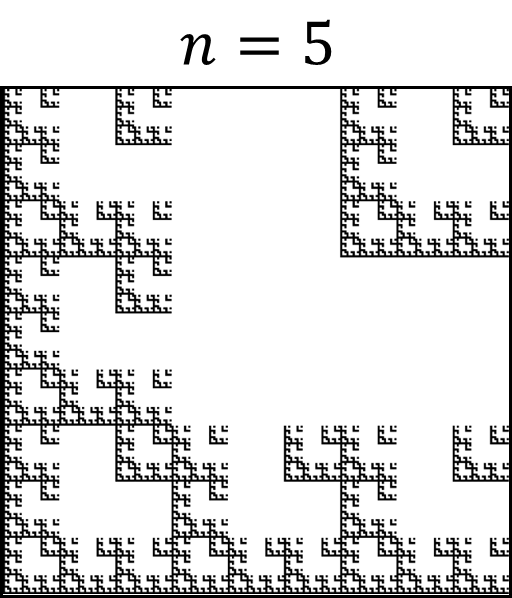}\\
\caption{The first 5 steps of the construction of a Sierpi\'nski Carpet modification. Embedded in $\mathbb{R}^2$, it is not dust type since the complement is not path-connected. Embedded in $\mathbb{R}^3$, it is dust type since the complement would be path-connected.}
\label{fig:tetrislcarpet}
\end{figure}

Since \cite[Theorem 4.7.3]{lapidusbook} states that the poles of $\tilde{\zeta}_A$ are invariant under choice of $\mathbb{R}^N$ to embed the set $A$ into (as long as $N \geq \dim(A)$), we believe that the classification of $A$ as a ``dust type'' should also be invariant. We plan to address this issue in later work.

%The real issue: find Theorem in FZF book that states that the dimensions don't change if you change the R^N you embed into
%Dimensions of Tetris L were found to be not the ones you get from Dust Theorems (maybe include that example instead of Sus Carpet?)

%We noticed in the $N=2$ case that the corner-sharing intersections (sourced from a dimension-0 ``face'') are in a sense ``independent'' of the edge-sharing intersections (sourced from a dimension-1 ``face''). We see this in the table of results - the poles generated by the $D$ intersections add on to the poles generated by the $H$ and $V$ intersections (producing poles of order 2 if either the $H$ or $V$ multipliers equal 1), whereas the poles generated by the $H$ and $V$ intersections produce simple poles according to their multipliers but we do not get poles of order 2 if the multipliers happen to be the same. \\

%We conjecture that in dust type Menger Sponge modifications, intersections which source from different-dimension face analogues will have their orders add together if the multipliers coincide. On the other hand, the intersections which source from the same-dimension face analogues will behave like the $H$ and $V$ intersection sequences: producing additional poles if their multipliers are different, but the orders do not add together if any of their multipliers are the same.

%------
% Insert acknowledgments and information
% regarding funding at the end of the last
% section, i.e., right before the bibliography.
%------

\begin{ack}
This paper is a continuation of work started in \cite{summerresearch} and \cite{mastersthesis}, advised by Dr. Sean Watson and Dr. Erin Pearse respectively. We would like to thank the Bill and Linda Frost Fund for their generous support of this work. We thank the anonymous referees for taking the time to read the paper and give helpful feedback.
\end{ack}

%\begin{funding}
%We would like to thank the Bill and Linda Frost Fund for their generous support of this work in the Summer of 2022 and 2020.
%\end{funding}

%------
% Insert the bibliography.
%------

\end{document}